\title{Lyndon interpolation property for extensions of $\mathbf{S4}$ and intermediate propositional logics}
\author{Taishi Kurahashi\footnote{Email: kurahashi@people.kobe-u.ac.jp}
\footnote{Graduate School of System Informatics,
Kobe University,
1-1 Rokkodai, Nada, Kobe 657-8501, Japan.}
}
\date{}
\theoremstyle{plain}
\newtheorem{thm}{Theorem}[section]
\newtheorem{lem}[thm]{Lemma}
\newtheorem{prop}[thm]{Proposition}
\newtheorem{cor}[thm]{Corollary}
\newtheorem{fact}[thm]{Fact}
\theoremstyle{definition}
\newtheorem{defn}[thm]{Definition}
\newtheorem{rem}[thm]{Remark}
\newcommand{\Triv}{\mathbf{Triv}}
\newcommand{\SFi}{\mathbf{S5}}
\newcommand{\SF}{\mathbf{S4}}
\newcommand{\K}{\mathbf{K}}
\newcommand{\GV}{\mathbf{GV}}
\newcommand{\LP}{\mathbf{LP}}
\newcommand{\LC}{\mathbf{LC}}
\newcommand{\LS}{\mathbf{LS}}
\newcommand{\LV}{\mathbf{LV}}
\newcommand{\KC}{\mathbf{KC}}
\newcommand{\Cl}{\mathbf{Cl}}
\newcommand{\xr}[1]{\xrightarrow[#1]{(P^+, P^-)}}
\newcommand{\Fm}[1]{\Phi^{(P^+, P^-)}_{#1}}
\newcommand{\Cj}[1]{\chi^{(P^+, P^-)}_{#1}}
\newcommand{\Cjc}[1]{\chi^{(P^-, P^+)}_{#1}}
\begin{document}

\maketitle

\begin{abstract}
We study the Lyndon interpolation property (LIP) and the uniform Lyndon interpolation property (ULIP) for extensions of $\mathbf{S4}$ and intermediate propositional logics. 
We prove that among the 18 consistent normal modal logics of finite height extending $\mathbf{S4}$ known to have CIP, 11 logics have LIP and 7 logics do not. 
We also prove that for intermediate propositional logics, the Craig interpolation property, LIP, and ULIP are equivalent. 
\end{abstract}

\section{Introduction}

The Craig Interpolation Theorem was first proved by Craig \cite{Craig57} for classical first-order predicate logic, and has been studied for many logics. 
Let $v(\varphi)$ denote the set of all propositional variables contained in a propositional formula $\varphi$. 
We say that a propositional logic $L$ has the \emph{Craig interpolation property} (CIP) iff for any formulas $\varphi$ and $\psi$, if $L \vdash \varphi \to \psi$, then there exists a formula $\rho$ such that $v(\rho) \subseteq v(\varphi) \cap v(\psi)$, $L \vdash \varphi \to \rho$, and $L \vdash \rho \to \psi$. 
CIP has been extensively investigated in modal and intermediate propositional logics. 
In particular, Maksimova \cite{Maksimova77} proved that there are exactly seven consistent intermediate propositional logics having CIP. 
They are classical propositional logic $\Cl$, $\LS$, $\LV$, $\LP_2$, the G\"odel--Dummett logic $\LC$, the Jankov logic $\KC$, and intuitionistic propositional logic $\mathbf{Int}$\footnote{For the names of these logics, we adopt those presented in Gabbay and Maksimova's book \cite{GM05}.}. 
Moreover, Maksimova proved that there are at most 36 consistent normal extensions of $\SF$ having CIP, and that at least 30 consistent logics among them have CIP (see \cite{GM05,Maksimova91}). 
In particular, there are exactly 18 consistent normal modal logics of finite height extending $\SF$ and having CIP (see Table \ref{table:IP}\footnote{See Gabbay and Maksimova \cite{GM05} for the detailed definitions of the logics included in the table}). 

Lyndon \cite{Lyndon59} proved that classical first-order predicate logic enjoys a stronger interpolation property. 
Let $v^+(\varphi)$ (resp.~$v^-(\varphi)$) be the set of all propositional variables occurring in a propositional formula $\varphi$ positively (resp.~negatively). 
We say that a propositional logic $L$ has the \emph{Lyndon interpolation property} (LIP) iff for any formulas $\varphi$ and $\psi$, if $L \vdash \varphi \to \psi$, then there exists a formula $\rho$ such that $v^\circ(\rho) \subseteq v^\circ(\varphi) \cap v^\circ(\psi)$ for $\circ \in \{+, -\}$, $L \vdash \varphi \to \rho$, and $L \vdash \rho \to \psi$. 
LIP for extensions of $\SF$ has been studied by Maksimova \cite{Maksimova82, Maksimova14}, Fitting \cite{Fitting83}, and Kuznets \cite{Kuznets16, Kuznets18}. 
It is also verifiable that Shimura's proofs of CIP for some extensions of $\SF$ also work with respect to LIP \cite{Shimura92}\footnote{Shimura proved that CIP of $\mathbf{S4.4}$, $\mathbf{GW}$, and $\Gamma(\LP_2, \omega, 1)$ follows from that of $\SFi$, $\Triv$, and $\SFi$, respectively. It is easily shown that his proofs also show that LIP of $\mathbf{S4.4}$, $\mathbf{GW}$, and $\Gamma(\LP_2, \omega, 1)$ follows from that of $\SFi$, $\Triv$, and $\SFi$, respectively. The first explicit proof of LIP for $\mathbf{GW}$ was presented in Maksimova \cite{Maksimova14}.}.
Among the 30 logics known to have CIP, it has been proved that 12 logics actually have LIP \footnote{These 12 logics are $\mathbf{Triv}$ \cite{Maksimova82}, $\SFi$ \cite{Fitting83}, $\mathbf{GW.2}$ \cite{Maksimova82}, $\mathbf{S4.4}$ \cite{Shimura92}, $\mathbf{GW}$ \cite{Shimura92,Maksimova14}, $\Gamma(\LP_2, \omega, 1)$ \cite{Shimura92}, $\mathbf{S4.2}$ \cite{Kuznets16}, $\mathbf{S4.1.2}$ \cite{Maksimova82}, $\mathbf{Grz.2}$ \cite{Maksimova14}, $\SF$ \cite{Fitting83,Maksimova82}, $\mathbf{S4.1}$ \cite{Maksimova82}, and $\mathbf{Grz}$ \cite{Maksimova14}.} and 4 logics do not.
LIP for intermediate propositional logics has been studied by Maksimova \cite{Maksimova82, Maksimova14} and Kuznets and Lellmann \cite{KL18,KL21}. 
Among these 7 logics known to have CIP, LIP for the six logics other than $\LV$ has already been proved\footnote{Maksimova \cite{Maksimova14} explicitly mentioned that LIP of $\LP_2$ immediately follows from that of $\mathbf{GW}$. 
Even earlier, although implicitly, Shimura's results also yield the LIP of $\LP_2$.} 
%Therefore, Shimura's proof is considered to yield the LIP of $\LP_2$ before then.
and the problem of LIP for $\LV$ was open (see Maksimova \cite{Maksimova14} and Table \ref{table:IP2}).

\begin{table}[ht]

 \centering
  \begin{tabular}{|l||l|l|}
   \hline
   Logic & CIP & LIP \\
   \hline
   \hline
    $\Gamma(\Cl, 1, 0)$, $\Triv$ & Maksimova \cite{Maksimova80} & Maksimova \cite{Maksimova82} \\
   \hline
    $\Gamma(\Cl, 2, 0)$ & Maksimova \cite{Maksimova80} & $\times$ Maksimova \cite{Maksimova82} \\
   \hline
    $\Gamma(\Cl, \omega, 0)$, $\SFi$ & Schumm \cite{Schumm76} & Fitting \cite{Fitting83} \\
   \hline 
   \hline
    $\Gamma(\LS, 1, 1)$, $\mathbf{GW.2}$ & Schumm \cite{Schumm76} & Maksimova \cite{Maksimova82} \\
   \hline
    $\Gamma(\LS, 1, 2)$ & Maksimova \cite{Maksimova80} & $\times$ Maksimova \cite{Maksimova82} \\
   \hline
    $\Gamma(\LS, 1, \omega)$ & Maksimova \cite{Maksimova80} & $\times$ This paper (Theorem \ref{failure}) \\
   \hline
    $\Gamma(\LS, 2, 1)$ & Maksimova \cite{Maksimova80} & This paper (Theorem \ref{thm:LS,2,1}) \\
   \hline
    $\Gamma(\LS, \omega, 1)$, $\mathbf{S4.4}$ & Schumm \cite{Schumm76} & Shimura \cite{Shimura92} \\
   \hline 
   \hline
    $\Gamma(\LV, 1, 1)$, $\mathbf{GV}$ & Maksimova \cite{Maksimova80} & This paper (Theorem \ref{thm:GV}) \\
   \hline
    $\Gamma(\LV, 1, 2)$ & Maksimova \cite{Maksimova80} & $\times$ Maksimova \cite{Maksimova82} \\
   \hline
    $\Gamma(\LV, 1, \omega)$ & Maksimova \cite{Maksimova80} & $\times$ This paper (Theorem \ref{failure}) \\
   \hline
    $\Gamma(\LV, 2, 1)$ & Maksimova \cite{Maksimova80} & This paper (Theorem \ref{thm:LV,2,1}) \\
   \hline
    $\Gamma(\LV, \omega, 1)$ & Maksimova \cite{Maksimova80} & This paper (Theorem \ref{thm:LV,o,1}) \\
   \hline
   \hline
    $\Gamma(\LP_2, 1, 1)$, $\mathbf{GW}$ & Schumm \cite{Schumm76} & Shimura \cite{Shimura92}; Maksimova \cite{Maksimova14} \\
   \hline
    $\Gamma(\LP_2, 1, 2)$ & Maksimova \cite{Maksimova80} & $\times$ Maksimova \cite{Maksimova82} \\
   \hline
    $\Gamma(\LP_2, 1, \omega)$ & Maksimova \cite{Maksimova80} & $\times$ This paper (Theorem \ref{failure}) \\
   \hline
    $\Gamma(\LP_2, 2, 1)$ & Maksimova \cite{Maksimova80} & This paper (Theorem \ref{thm:LP2,2,1}) \\
   \hline
    $\Gamma(\LP_2, \omega, 1)$ & Schumm \cite{Schumm76} & Shimura \cite{Shimura92} \\
   \hline
   \end{tabular}
 \caption{CIP and LIP for consistent normal modal logics of finite height extending $\SF$}
 \label{table:IP}
 \end{table}

In the present paper, we focus on the 18 extensions of $\SF$ of finite height listed in Table \ref{table:IP}. 
The first goal of this paper is to provide a complete description concerning LIP for these logics.
We prove that five of them have LIP and three do not. 
Then, we conclude that among these 18 logics, 11 logics have LIP and 7 logics do not (see Table \ref{table:IP}). 
Our strategy for proving LIP for these logics is to prove a stronger property, the uniform Lyndon interpolation property. 
We say that a propositional logic $L$ has the \emph{uniform Lyndon interpolation property} (ULIP) iff for any formula $\varphi$ and any finite sets $P^+, P^-$ of propositional variables, there exists a formula $\theta$ satisfying the following three conditions:
\begin{enumerate}
    \item $L \vdash \varphi \to \theta$, 
    \item $v^\circ(\theta) \subseteq v^\circ(\varphi) \setminus P^\circ$ for $\circ \in \{+, -\}$,
    \item for any formula $\psi$ with $L \vdash \varphi \to \psi$ and $v^\circ(\psi) \cap P^\circ = \emptyset$ for $\circ \in \{+, -\}$, we have $L \vdash \theta \to \psi$. 
\end{enumerate}
Here, $\theta$ is called a \emph{uniform Lyndon interpolant} of $(\varphi, P^+, P^-)$ in $L$. 
ULIP is a simultaneous strengthening of LIP and the uniform interpolation property (UIP), and the notion ULIP was introduced in \cite{Kurahashi20}. 
In that paper, by extending the semantic technique for proving UIP developed by Visser \cite{Visser96}, it was proved that the modal logics such as $\K$, $\mathbf{KT}$, $\mathbf{KB}$, $\mathbf{GL}$, and $\mathbf{Grz}$ have ULIP.
We say that a logic $L$ is \emph{locally tabular} iff for any finite set $P$ of propositional variables, there are finitely many formulas built from variables in $P$ up to $L$-provable equivalence. 
It is known that LIP and ULIP are equivalent for every locally tabular logic (cf.~\cite{Kurahashi20}).
Since the logics in Table \ref{table:IP} are all locally tabular, to prove LIP for these logics, we will adopt the method of proving stronger ULIP.

We also discuss LIP and ULIP for intermediate propositional logics. 
The problem of LIP for the modal logic $\GV$ was stated to be open in Maksimova \cite{Maksimova14}, but in this paper, by proving LIP for $\GV$, we immediately obtain LIP for the intermediate logic $\LV$. 
From this, we conclude that CIP and LIP are equivalent for intermediate propositional logics.
Since the logics $\Cl$, $\LS$, $\LV$, $\LP_2$, and $\LC$ are known to be locally tabular (cf.~\cite[p.~428]{CZ97}), they are shown to enjoy ULIP. 
ULIP for $\mathbf{Int}$ follows immediately from ULIP for $\mathbf{Grz}$.
In the present paper, we prove ULIP for $\mathbf{Grz.2}$, which implies that $\mathbf{KC}$ also has ULIP.
From these investigations, we also conclude that CIP and ULIP are equivalent for intermediate propositional logics (see Table \ref{table:IP2}). 

\begin{table}[ht]
 \centering
\scriptsize{
\begin{tabular}{|l||l|l|l|l|}
\hline
Logic & CIP & LIP & UIP & ULIP \\
\hline \hline
$\Cl$ & Craig \cite{Craig57} & Lyndon \cite{Lyndon59} & $\checkmark$ & $\checkmark$ \\
\hline
$\LS$ & Maksimova \cite{Maksimova77} & Maksimova \cite{Maksimova82} & $\checkmark$ &  $\checkmark$ \\
\hline
$\LV$ & Maksimova \cite{Maksimova77} & \begin{tabular}{l} This paper \\ (Corollary \ref{cor1}) \end{tabular} & $\checkmark$ & $\checkmark$ \\
\hline
$\LP_2$ & Maksimova \cite{Maksimova77} & \begin{tabular}{l} Shimura \cite{Shimura92} \\ Maksimova \cite{Maksimova14} \end{tabular} & $\checkmark$ & $\checkmark$ \\
\hline
$\LC$ & Maksimova \cite{Maksimova77} & \begin{tabular}{l} Kuznets and \\ Lellmann \cite{KL18}\end{tabular} & $\checkmark$ & $\checkmark$ \\
\hline
$\KC$ & Gabbay \cite{Gabbay71} & Maksimova \cite{Maksimova82} & Maksimova \cite{Maksimova14} & \begin{tabular}{l} This paper \\ (Corollary \ref{cor3}) \end{tabular} \\
\hline
$\mathbf{Int}$ & Sch\"utte \cite{Schutte62} & Maksimova \cite{Maksimova82} & Pitts \cite{Pitts92} & \begin{tabular}{l} This paper \\ (Corollary \ref{cor2}) \end{tabular} \\
\hline
\end{tabular}
} \caption{Interpolation properties for intermediate propositional logics}
 \label{table:IP2}
\end{table}

This paper is organized as follows. 
In Section \ref{sec:Cl_ULIP}, we develop the basis of our method of proving ULIP for several logics. 
We present a simple proof of ULIP for classical propositional logic $\Cl$ using the basis of our method. 
Section \ref{sec:ULIP} is devoted to developing a sufficient condition for modal logics to have ULIP. 
More precisely, we introduce the notion that a class of Kripke models enjoys $n$-IP and prove that a logic $L$ has ULIP if $L$ is sound and complete with respect to a class of models enjoying $n$-IP for some natural number $n$. 
In Section \ref{sec:lemmas}, we prepare some lemmas on matching between clusters of Kripke models, which are used in our proofs of ULIP for several logics. 
In Sections \ref{sec:Cl}, \ref{sec:LS}, \ref{sec:LV}, and \ref{sec:LP2}, we use our method to prove ULIP for 11 logics in Table 1, including 6 logics for which LIP has already been proved.
In Section \ref{sec:failure}, we discuss the failure of LIP for some logics.
At last, in Section \ref{sec:intermediate}, we discuss LIP and ULIP for intermediate propositional logics.

\section{ULIP for classical propositional logic}\label{sec:Cl_ULIP}

In this section, we develop the basis of our method for proving ULIP of several modal logics and we apply this basis to prove ULIP of classical propositional logic. 
For this section only, we assume that formulas mean formulas of classical propositional logic. 
The language of classical propositional logic consists of propositional variables, logical constant $\bot$, and logical connectives $\land, \lor, \neg$ and $\to$. 

For each formula $\varphi$, we define the sets $v^+(\varphi)$ and $v^-(\varphi)$ recursively as follows: 
\begin{itemize}
\item $v^+(p) = \{p\}$ and $v^-(p) = \emptyset$ for every propositional variable $p$, 
\item $v^\circ(\bot) = \emptyset$ for $\circ \in \{+, -\}$, 
\item $v^\circ(\varphi \ast \psi) = v^\circ(\varphi) \cup v^\circ(\psi)$ for $\circ \in \{+, -\}$ and $\ast \in \{\land, \lor\}$, 
\item $v^+(\neg \varphi) = v^-(\varphi)$ and $v^-(\neg \varphi) = v^+(\varphi)$, 
\item $v^+ (\varphi \to \psi) = v^-(\varphi) \cup v^+(\psi)$ and $v^-(\varphi \to \psi) = v^+(\varphi) \cup v^-(\psi)$.
\end{itemize}
Also, let $v(\varphi) : = v^+(\varphi) \cup v^-(\varphi)$. 

\begin{defn}\label{defn:basic}
    Let $P^+$ and $P^-$ be any finite sets of propositional variables. 
\begin{enumerate}
    \item A formula $\varphi$ is said to be a \emph{$(P^+, P^-)$-formula} iff $v^\circ(\varphi) \subseteq P^\circ$ for $\circ \in \{+, -\}$. 

    \item Let $\Fm{0}$ denote a fixed finite set of $(P^+, P^-)$-formulas such that for any $(P^+, P^-)$-formula $\varphi$, there exists a $\psi \in \Fm{0}$ such that $\Cl \vdash \varphi \leftrightarrow \psi$. 
    The existence of such a finite set $\Fm{0}$ is easily proved. 
    Here, the subscript $0$ indicates box-free. 
    
    \item For every truth assignments $\mathsf{V}_0$ and $\mathsf{V}_1$ of formulas, we write
    \[
        \mathsf{V}_0 \xr{0} \mathsf{V}_1
    \]
    iff for any $(P^+, P^-)$-formula $\varphi$, if $\mathsf{V}_0(\varphi) = 1$, then $\mathsf{V}_1(\varphi) = 1$. 

    \item For every truth assignment $\mathsf{V}$, let $\Cj{0}[\mathsf{V}]$ denote the $(P^+, P^-)$-formula
    \[
        \bigwedge \{\varphi \in \Fm{0} \mid \mathsf{V}(\varphi) = 1\}.
    \]
    \end{enumerate}
\end{defn}

The following lemma is easily verified, and therefore we use the lemma freely without referring to it.

\begin{lem}[Cf.~{\cite[Proposition 6]{Kurahashi20}}]\label{lem:equiv}
Let $P^+$ and $P^-$ be any finite sets of propositional variables and $\mathsf{V}_0$ and $\mathsf{V}_1$ be any truth assignments. 
Then, the following are equivalent: 
\begin{enumerate}
    \item $\mathsf{V}_0 \xr{0} \mathsf{V}_1$. 
    \item $\mathsf{V}_1 \xrightarrow[0]{(P^-, P^+)} \mathsf{V}_0$. 
    \item $\mathsf{V}_1 \bigl(\Cj{0}[\mathsf{V}_0] \bigr) = 1$. 
    \item $\mathsf{V}_0 \bigl(\Cjc{0}[\mathsf{V}_1] \bigr) = 1$. 
\end{enumerate}
\end{lem}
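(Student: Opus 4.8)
The plan is to derive the four-way equivalence from two core biconditionals, (1)$\Leftrightarrow$(3) and (1)$\Leftrightarrow$(2), and then to obtain (2)$\Leftrightarrow$(4) for free by a relabeling argument. Since these already force all of (1), (2), (3) to be equivalent, and (2)$\Leftrightarrow$(4) closes the remaining gap, this is enough.

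First I would prove (1)$\Leftrightarrow$(3). For (1)$\Rightarrow$(3), observe that $\Cj{0}[\mathsf{V}_0]$ is by definition the conjunction of those $\varphi \in \Fm{0}$ with $\mathsf{V}_0(\varphi)=1$; each such conjunct is a $(P^+,P^-)$-formula, so (1) yields $\mathsf{V}_1(\varphi)=1$ for every conjunct, whence $\mathsf{V}_1$ satisfies the whole conjunction. For (3)$\Rightarrow$(1), let $\varphi$ be any $(P^+,P^-)$-formula with $\mathsf{V}_0(\varphi)=1$. By the defining property of $\Fm{0}$ there is a $\psi \in \Fm{0}$ with $\Cl \vdash \varphi \leftrightarrow \psi$; since truth assignments respect $\Cl$-provable equivalence, $\mathsf{V}_0(\psi)=1$, so $\psi$ is one of the conjuncts of $\Cj{0}[\mathsf{V}_0]$. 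From (3), $\mathsf{V}_1$ satisfies that conjunction and hence the conjunct $\psi$, and then $\mathsf{V}_1(\varphi)=1$ again by equivalence. This establishes (1)$\Leftrightarrow$(3).

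Next I would prove (1)$\Leftrightarrow$(2) via the negation duality between the two formula classes. The key observation is that $\varphi \mapsto \neg\varphi$ sends $(P^+,P^-)$-formulas to $(P^-,P^+)$-formulas and, up to $\Cl$-equivalence via double negation, is a bijection between the two classes, because $v^+(\neg\varphi)=v^-(\varphi)$ and $v^-(\neg\varphi)=v^+(\varphi)$. I would then rewrite the inner implication of (1) by its contrapositive: for each $(P^+,P^-)$-formula $\varphi$, the condition $\mathsf{V}_0(\varphi)=1 \Rightarrow \mathsf{V}_1(\varphi)=1$ is equivalent to $\mathsf{V}_1(\neg\varphi)=1 \Rightarrow \mathsf{V}_0(\neg\varphi)=1$, since classically $\mathsf{V}(\neg\varphi)=1$ iff $\mathsf{V}(\varphi)=0$. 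As $\varphi$ ranges over all $(P^+,P^-)$-formulas, $\neg\varphi$ ranges, up to equivalence, over all $(P^-,P^+)$-formulas, so this last condition is exactly (2).

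Finally, (2)$\Leftrightarrow$(4) follows by applying the already-proved equivalence (1)$\Leftrightarrow$(3) verbatim after interchanging $P^+$ with $P^-$ and $\mathsf{V}_0$ with $\mathsf{V}_1$: under this relabeling (1) becomes (2) and (3) becomes $\mathsf{V}_0\bigl(\Cjc{0}[\mathsf{V}_1]\bigr)=1$, i.e.\ (4). I expect no serious obstacle; the only points demanding care are bookkeeping ones, namely checking that $\Fm{0}$ genuinely represents every $(P^+,P^-)$-formula up to $\Cl$-equivalence and that truth values are invariant under such equivalence (so that routing the argument through the finite representative set is harmless), and verifying through the recursive clauses for $v^+$ and $v^-$ that negation really is a duality between the two formula classes. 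This matches the author's remark that the lemma is easily verified.
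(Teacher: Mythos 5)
Your proof is correct. The paper in fact gives no proof of this lemma at all (it is declared ``easily verified'' and cited to \cite[Proposition 6]{Kurahashi20}), and your argument --- (1)$\Leftrightarrow$(3) via the finite representative set $\Fm{0}$ together with invariance of truth values under $\Cl$-equivalence, (1)$\Leftrightarrow$(2) via the negation duality $v^{+}(\neg\varphi)=v^{-}(\varphi)$, $v^{-}(\neg\varphi)=v^{+}(\varphi)$, and (2)$\Leftrightarrow$(4) by the $(P^{+},P^{-})$-swap symmetry --- is exactly the routine verification the author intends the reader to supply.
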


We prove the following theorem, which is an adaptation of a result proved in \cite[Proof of Lemma 1]{Kurahashi20} to the framework of this section.

\begin{thm}\label{basis}
Let $P_0^+, P_1^+, P_2^+, P_0^-, P_1^-$ and $P_2^-$ be finite sets of propositional variables such that $P_0^\circ$, $P_1^\circ$ and $P_2^\circ$ are pairwise disjoint for $\circ \in \{+, -\}$. 
Let $\mathsf{V}_0$ and $\mathsf{V}_1$ be truth assignments. 
If $\mathsf{V}_0 \xrightarrow[0]{(P_1^+, P_1^-)} \mathsf{V}_1$, then there exists a truth assignment $\mathsf{V}^*$ such that $\mathsf{V}_0 \xrightarrow[0]{(P_0^+ \cup P_1^+, P_0^- \cup P_1^-)} \mathsf{V}^*$ and $\mathsf{V}^* \xrightarrow[0]{(P_1^+ \cup P_2^+, P_1^- \cup P_2^-)} \mathsf{V}_1$. 
\end{thm}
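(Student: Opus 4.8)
The plan is to reduce the relation $\xrightarrow[0]{(P^+,P^-)}$ between truth assignments to a condition on individual variables, and then to construct $\mathsf{V}^*$ variable by variable. The key preliminary observation is that for truth assignments $\mathsf{A}$ and $\mathsf{B}$, one has $\mathsf{A} \xrightarrow[0]{(P^+,P^-)} \mathsf{B}$ if and only if $\mathsf{A}(p) = 1 \Rightarrow \mathsf{B}(p) = 1$ for every $p \in P^+$ and $\mathsf{B}(p) = 1 \Rightarrow \mathsf{A}(p) = 1$ for every $p \in P^-$. The forward direction is immediate by testing the $(P^+,P^-)$-formulas $p$ (for $p \in P^+$) and $\neg p$ (for $p \in P^-$); the backward direction is the standard Lyndon monotonicity property of positive and negative occurrences, which I would either cite or dispatch by a routine induction on the structure of a $(P^+,P^-)$-formula. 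Applying the forward direction to the hypothesis $\mathsf{V}_0 \xrightarrow[0]{(P_1^+,P_1^-)} \mathsf{V}_1$ records the only facts about $\mathsf{V}_0,\mathsf{V}_1$ I will use: $\mathsf{V}_0(p) = 1 \Rightarrow \mathsf{V}_1(p) = 1$ for $p \in P_1^+$, and $\mathsf{V}_1(p) = 1 \Rightarrow \mathsf{V}_0(p) = 1$ for $p \in P_1^-$.

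Next I define $\mathsf{V}^*$ explicitly by declaring $\mathsf{V}^*(p) = 1$ exactly when either $p \in P_0^+ \cup P_1^+$ and $\mathsf{V}_0(p) = 1$, or $p \in P_1^- \cup P_2^-$ and $\mathsf{V}_1(p) = 1$, and $\mathsf{V}^*(p) = 0$ otherwise. The idea is that the first clause forces $\mathsf{V}^*$ to dominate $\mathsf{V}_0$ on the positive variables relevant to the first arrow, while the second clause forces $\mathsf{V}_1$ to dominate $\mathsf{V}^*$ on the negative variables relevant to the second arrow; all other values default to $0$.

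By the variable characterization, establishing the two conclusions reduces to four variable-wise implications: for the first arrow, $\mathsf{V}_0(p) = 1 \Rightarrow \mathsf{V}^*(p) = 1$ on $P_0^+ \cup P_1^+$ and $\mathsf{V}^*(p) = 1 \Rightarrow \mathsf{V}_0(p) = 1$ on $P_0^- \cup P_1^-$; for the second arrow, $\mathsf{V}^*(p) = 1 \Rightarrow \mathsf{V}_1(p) = 1$ on $P_1^+ \cup P_2^+$ and $\mathsf{V}_1(p) = 1 \Rightarrow \mathsf{V}^*(p) = 1$ on $P_1^- \cup P_2^-$. Two of these hold immediately from the definition of $\mathsf{V}^*$: the first clause delivers the first implication, and the second clause delivers the fourth. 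The two remaining implications, both having hypothesis $\mathsf{V}^*(p) = 1$, are where the work lies. Unwinding the definition, $\mathsf{V}^*(p) = 1$ means one of the two defining clauses holds; in each case the conclusion is either immediate, or else involves a variable lying simultaneously in an opposing index set, and here the pairwise disjointness of $P_0^\circ, P_1^\circ, P_2^\circ$ collapses each such overlap $(P_0^+ \cup P_1^+) \cap (P_1^+ \cup P_2^+)$ and $(P_0^- \cup P_1^-) \cap (P_1^- \cup P_2^-)$ precisely onto the middle set $P_1^+$, respectively $P_1^-$.

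This collapse onto the middle sets is the crux, and it is exactly where the hypothesis is consumed: on $P_1^+$ the potentially bad variable would have $\mathsf{V}_0(p) = 1$ while $\mathsf{V}_1(p) = 0$, contradicting $\mathsf{V}_0(p) = 1 \Rightarrow \mathsf{V}_1(p) = 1$, and symmetrically on $P_1^-$ it would have $\mathsf{V}_1(p) = 1$ while $\mathsf{V}_0(p) = 0$, contradicting $\mathsf{V}_1(p) = 1 \Rightarrow \mathsf{V}_0(p) = 1$. The only remaining potential clashes are ruled out trivially, since they would force $\mathsf{V}_0$ (or $\mathsf{V}_1$) to take both truth values at $p$. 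I expect the bookkeeping of these cases — keeping straight which of the six index sets a single variable may simultaneously inhabit — to be the sole delicate point; once disjointness has reduced everything to the middle sets, each case closes in one line via the translated hypothesis.
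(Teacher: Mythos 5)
Your proof is correct and follows essentially the same route as the paper: a pointwise definition of $\mathsf{V}^*$ verified through the positive/negative monotonicity induction on formulas, with pairwise disjointness collapsing the dangerous overlaps onto $P_1^+$ and $P_1^-$, where the hypothesis $\mathsf{V}_0 \xrightarrow[0]{(P_1^+, P_1^-)} \mathsf{V}_1$ is applied. The only difference is presentational: your two-clause closed-form definition of $\mathsf{V}^*$ (together with the variable-level characterization of the arrow relation, factored out as a lemma) replaces the paper's 16-row case table, and the two definitions agree, modulo the hypothesis, on every variable where the value actually matters.
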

\begin{proof}
    Let $\mathsf{V}_0$ and $\mathsf{V}_1$ be any truth assignments such that $\mathsf{V}_0 \xrightarrow[0]{(P_1^+, P_1^-)} \mathsf{V}_1$. 
    We define a truth assignment $\mathsf{V}^*$ by referring to Table \ref{table:Assignment}. 
    For each propositional variable $p$, let $\mathsf{V}^*(p) = 1$ iff $p$ meets one of the conditions stated in the 16 rows of the table. 
    For example, the fifth row in the table states the condition that $p \in P_1^+ \cap P_0^-$, $p \notin P_0^+ \cup P_2^+ \cup P_1^- \cup P_2^-$, and $\mathsf{V}_0(p) = 1$. 

\begin{table}[ht]

 \centering
  \begin{tabular}{|c||c|c|c|c|c|c|l|}
   \hline
    & $P_0^+$ & $P_1^+$ & $P_2^+$ & $P_0^-$ & $P_1^-$ & $P_2^-$ & \\
   \hline \hline
    1 & $\checkmark$ & & & $\checkmark$ & & & $\mathsf{V}_0(p) = 1$ \\
    \hline
    2 & $\checkmark$ & & & & $\checkmark$ & & $\mathsf{V}_0(p) = 1$ \\
    \hline
    3 & $\checkmark$ & & & & & $\checkmark$ & $\mathsf{V}_0(p) = 1$ or $\mathsf{V}_1(p) = 1$ \\
    \hline
    4 & $\checkmark$ & & & & & & $\mathsf{V}_0(p) = 1$ \\
    \hline
    5 & & $\checkmark$ & & $\checkmark$ & & & $\mathsf{V}_0(p) = 1$ \\
    \hline
    6 & & $\checkmark$ & & & $\checkmark$ & & $\mathsf{V}_1(p) = 1$ \\
    \hline
    7 & & $\checkmark$ & & & & $\checkmark$ & $\mathsf{V}_1(p) = 1$ \\
    \hline
    8 & & $\checkmark$ & & & & & $\mathsf{V}_0(p) = 1$ \\
    \hline
    9 & & & $\checkmark$ & $\checkmark$ & & & $\mathsf{V}_0(p) = 1$ and $\mathsf{V}_1(p) = 1$ \\
    \hline
    10 & & & $\checkmark$ & & $\checkmark$ & & $\mathsf{V}_1(p) = 1$ \\
    \hline
    11 & & & $\checkmark$ & & & $\checkmark$ & $\mathsf{V}_1(p) = 1$ \\
    \hline
    12 & & & $\checkmark$ & & & & $\mathsf{V}_1(p) = 1$ \\
    \hline
    13 & & & & $\checkmark$ & & & $\mathsf{V}_0(p) = 1$ \\
    \hline
    14 & & & & & $\checkmark$ & & $\mathsf{V}_1(p) = 1$ \\
    \hline
    15 & & & & & & $\checkmark$ & $\mathsf{V}_1(p) = 1$ \\
    \hline
    16 & & & & & & & $\mathsf{V}_0(p) = 1$ \\
    \hline
\end{tabular}
\caption{The definition of $\mathsf{V}^*$}
 \label{table:Assignment}
\end{table}

For the proof of $\mathsf{V}_0 \xrightarrow[0]{(P_0^+ \cup P_1^+, P_0^- \cup P_1^-)} \mathsf{V}^*$, we simultaneously prove the following two conditions by induction on the construction of $\varphi$: 
\begin{enumerate}
    \item If $\varphi$ is a $(P_0^+ \cup P_1^+, P_0^- \cup P_1^-)$-formula and $\mathsf{V}_0(\varphi) = 1$, then $\mathsf{V}^*(\varphi) = 1$. 

     \item If $\varphi$ is a $(P_0^- \cup P_1^-, P_0^+ \cup P_1^+)$-formula and $\mathsf{V}_0(\varphi) = 0$, then $\mathsf{V}^*(\varphi) = 0$.  
\end{enumerate}

The case of $\bot$ trivially holds. 
We prove the case of a propositional variable $p$. 

\medskip

1. Suppose $p \in P_0^+ \cup P_1^+$ and $\mathsf{V}_0(p) = 1$. 
If $p \in P_0^+$, then $p$ meets one of 1, 2, 3, and 4, and hence $\mathsf{V}^*(p) = 1$. 
If $p \in P_1^+$, then we have $\mathsf{V}_1(p) = 1$ because $\mathsf{V}_0 \xrightarrow[0]{(P_1^+, P_1^-)} \mathsf{V}_1$. 
Then, $p$ meets one of 5, 6, 7, and 8, and thus we obtain $\mathsf{V}^*(p) = 1$. 

\medskip

2. Suppose $p \in P_0^- \cup P_1^-$ and $\mathsf{V}^*(p) = 1$. 
In this case, $p$ meets one of 1, 2, 5, 6, 9, 10, 13, and 14. 
If $p$ meets one of 1, 2, 5, 9, and 13, then we have $\mathsf{V}_0(p) = 1$. 
If $p$ meets one of 6, 10, and 14, then $p \in P_1^-$ and $\mathsf{V}_1(p) = 1$. 
Since $\mathsf{V}_0 \xrightarrow[0]{(P_1^+, P_1^-)} \mathsf{V}_1$, we obtain $\mathsf{V}_0(p) = 1$. 

\medskip

The cases of the boolean combinations are easily proved by using the induction hypothesis. 

\medskip

For the proof of $\mathsf{V}^* \xrightarrow[0]{(P_1^+ \cup P_2^+, P_1^- \cup P_2^-)} \mathsf{V}_1$, we simultaneously prove the following two conditions by induction on the construction of $\varphi$: 
\begin{enumerate}
    \item If $\varphi$ is a $(P_1^+ \cup P_2^+, P_1^- \cup P_2^-)$-formula and $\mathsf{V}^*(\varphi) = 1$, then $\mathsf{V}_1(\varphi) = 1$. 

     \item If $\varphi$ is a $(P_1^- \cup P_2^-, P_1^+ \cup P_2^+)$-formula and $\mathsf{V}^*(\varphi) = 0$, then $\mathsf{V}_1(\varphi) = 0$.  
\end{enumerate}

We give only the proof of the case that $\varphi$ is a propositional variable $p$. 

\medskip

1. Suppose $p \in P_1^+ \cup P_2^+$ and $\mathsf{V}^*(p) = 1$. 
Then, $p$ meets one of 5, 6, 7, 8, 9, 10, 11, and 12. 
If $p$ meets one of 6, 7, 9, 10, 11, and 12, then $\mathsf{V}_1(p)=1$. 
If $p$ meets one of 5 and 8, then $p \in P_1^+$ and $\mathsf{V}_0(p) = 1$. 
Since $\mathsf{V}_0 \xrightarrow[0]{(P_1^+, P_1^-)} \mathsf{V}_1$, we obtain $\mathsf{V}_1(p) = 1$. 

\medskip

2. Suppose $p \in P_1^- \cup P_2^-$ and $\mathsf{V}_1(p) = 1$. 
If $p \notin P_0^+ \cap P_1^-$, then $p$ meets one of 3, 6, 7, 10, 11, 14, and 15, and hence $\mathsf{V}^*(p) = 1$. 
If $p \in P_0^+ \cap P_1^-$, then we have $\mathsf{V}_0(p) = 1$ because $\mathsf{V}_0 \xrightarrow[0]{(P_1^+, P_1^-)} \mathsf{V}_1$. 
Then $p$ meets 2, and thus we get $\mathsf{V}^*(p) = 1$. 
\end{proof}

By using Theorem \ref{basis}, we can easily prove ULIP for classical propositional logic $\Cl$. 

\begin{thm}\label{Cl_ULIP}
The logic $\Cl$ has ULIP. 
\end{thm}
\begin{proof}
Let $\varphi$ be any formula and $P^+$ and $P^-$ be any finite sets of propositional variables. 
Let $P_0^\circ : = P^\circ$ and $P_1^\circ : = v^\circ(\varphi) \setminus P^\circ$ for $\circ \in \{+, -\}$. 
Let $\theta$ be the $(P_1^+, P_1^-)$-formula
\[
    \bigwedge \{\psi \in \Phi_0^{(P_1^+, P_1^-)} \mid \Cl \vdash \varphi \to \psi\}.
\]
We prove that $\theta$ is a uniform Lyndon interpolant of $(\varphi, P^+, P^-)$ in $\Cl$. 
The conditions $\Cl \vdash \varphi \to \theta$ and $v^\circ(\theta) \subseteq v^\circ(\varphi) \setminus P^\circ$ for $\circ \in \{+, -\}$ are easily verified. 
Let $\rho$ be any formula such that $\Cl \nvdash \theta \to \rho$ and $v^\circ(\rho) \cap P^\circ = \emptyset$ for $\circ \in \{+, -\}$. 
It suffices to prove $\Cl \nvdash \varphi \to \rho$. 
Let $P_2^\circ : = v^\circ(\rho) \setminus v^\circ(\varphi)$ for $\circ \in \{+, -\}$. 
Since $v^\circ(\rho) \cap P^\circ = \emptyset$, we have that $P_0^\circ$, $P_1^\circ$, and $P_2^\circ$ are pairwise disjoint for $\circ \in \{+, -\}$. 

Since $\Cl \nvdash \theta \to \rho$, there exists a truth assignment $\mathsf{V}_1$ such that $\mathsf{V}_1(\theta) = 1$ and $\mathsf{V}_1(\rho) = 0$. 
Since $\mathsf{V}_1\bigl(\chi_0^{(P_1^-, P_1^+)}[\mathsf{V}_1] \bigr) = 1$, we have $\mathsf{V}_1\bigl(\theta \to \neg \chi_0^{(P_1^-, P_1^+)}[\mathsf{V}_1] \bigr) = 0$. 
Thus, $\Cl \nvdash \theta \to \neg \chi_0^{(P_1^-, P_1^+)}[\mathsf{V}_1]$. 
Since $\neg \chi_0^{(P_1^-, P_1^+)}[\mathsf{V}_1]$ is a $(P_1^+, P_1^-)$-formula, by the definition of $\theta$, we obtain $\Cl \nvdash \varphi \to \neg \chi_0^{(P_1^-, P_1^+)}[\mathsf{V}_1]$. 
Hence, there exists a truth assignment $\mathsf{V}_0$ such that $\mathsf{V}_0(\varphi) = 1$ and $\mathsf{V}_0 \bigl(\chi_0^{(P_1^-, P_1^+)}[\mathsf{V}_1] \bigr) = 1$. 
By Lemma \ref{lem:equiv}, we get $\mathsf{V}_0 \xrightarrow[0]{(P_1^+, P_1^-)} \mathsf{V}_1$. 

It follows from Theorem \ref{basis} that there exists a truth assignment $\mathsf{V}^*$ such that $\mathsf{V}_0 \xrightarrow[0]{(P_0^+ \cup P_1^+, P_0^- \cup P_1^-)} \mathsf{V}^*$ and $\mathsf{V}^* \xrightarrow[0]{(P_1^+ \cup P_2^+, P_1^- \cup P_2^-)} \mathsf{V}_1$. 
Since $\varphi$ is a $(P_0^+ \cup P_1^+, P_0^- \cup P_1^-)$-formula and $\rho$ is a $(P_1^+ \cup P_2^+, P_1^- \cup P_2^-)$-formula, we obtain $\mathsf{V}^*(\varphi) = 1$ and $\mathsf{V}^*(\rho) = 0$. 
Therefore, $\mathsf{V}^*(\varphi \to \rho) = 0$. 
We conclude $\Cl \nvdash \varphi \to \rho$. 
\end{proof}

\section{A sufficient condition for ULIP in modal logic}\label{sec:ULIP}

In this section, we present a sufficient condition for modal logics to have ULIP. 
For this purpose, we extend Definition \ref{defn:basic} to the framework of modal propositional logic. 
From now on, we assume that formulas mean formulas of modal propositional logic.
The language of modal propositional logic is obtained from that of classical propositional logic by adding the unary modal operator $\Box$. 
Let $\Diamond \varphi$ be the abbreviation for $\neg \Box \neg \varphi$. 
The notions of positive variables and negative variables of formulas are extended to the language of modal propositional logic with the following clause: 
\begin{itemize}
\item $v^\circ(\Box \varphi) = v^\circ(\varphi)$ for $\circ \in \{+, -\}$. 
\end{itemize}

\begin{defn}
For each formula $\varphi$, let $d(\varphi)$ denote the maximum number of nested occurrences of $\Box$ in $\varphi$. 
More precisely, $d(\varphi)$ is defined recursively as follows: 
\begin{itemize}
    \item $d(p) = d(\bot) = 0$, where $p$ is a propositional variable $p$, 
    \item $d(\neg \varphi) = d(\varphi)$, 
    \item $d(\varphi \circ \psi) = \max\{d(\varphi), d(\psi)\}$ for $\circ \in \{\land, \lor, \to\}$, 
    \item $d(\Box \varphi) = d(\varphi) + 1$. 
\end{itemize}
\end{defn}

\begin{defn}\leavevmode
    \begin{itemize}
         \item A pair $(W, R)$ is called a ($\SF$-)\emph{Kripke frame} iff $W$ is a non-empty set and $R$ is a reflexive and transitive binary relation on $W$. 

         \item A triple $(W, R, \Vdash)$ is called a \emph{Kripke model} iff $(W, R)$ is a Kripke frame and $\Vdash$ is a satisfaction relation between elements of $W$ and formulas fulfilling the usual conditions for each propositional connective and the following condition:
    \[
        (M, x) \Vdash \Box \varphi \iff \forall y (x R y \Rightarrow (M, y) \Vdash \varphi). 
    \]
\end{itemize}
\end{defn}

\begin{defn}\label{defn:basic2}
    Let $P^+$ and $P^-$ be any finite sets of propositional variables and $n$ be any natural number. 
\begin{enumerate}
    \item A formula $\varphi$ is said to be a \emph{$(P^+, P^-)$-formula} iff $v^\circ(\varphi) \subseteq P^\circ$ for $\circ \in \{+, -\}$. 

    \item Let $\Fm{n}$ denote a fixed finite set of $(P^+, P^-)$-formulas $\varphi$ with $d(\varphi) \leq n$ satisfying that for any $(P^+, P^-)$-formula $\psi$ with $d(\psi) \leq n$, there exists a $\rho \in \Fm{n}$ such that $\K \vdash \psi \leftrightarrow \rho$. 
    The existence of such a finite set $\Fm{n}$ is proved by induction on $n$. 
    
    \item For Kripke models $M_0 = (W_0, R_0, \Vdash_0)$ and $M_1  = (W_1, R_1, \Vdash_1)$ and elements $w_0 \in W_0$ and $w_1 \in W_1$, we write
    \[
        (M_0, w_0) \xr{n} (M_1, w_1)
    \]
    iff for any $(P^+, P^-)$-formula $\varphi$ with $d(\varphi) \leq n$, if $(M_0, w_0) \Vdash_0 \varphi$, then $(M_1, w_1) \Vdash_1 \varphi$. 

    \item For every Kripke model $M = (W, R, \Vdash)$ and element $w \in W$, let $\Cj{n}[M, w]$ denote the $(P^+, P^-)$-formula
    \[
        \bigwedge \{\varphi \in \Fm{n} \mid (M, w) \Vdash \varphi\}.
    \]
    \end{enumerate}
\end{defn}

\begin{lem}[Cf.~{\cite[Proposition 6]{Kurahashi20}}]\label{lem:equiv2}
Let $P^+$ and $P^-$ be any finite sets of propositional variables, $M_0 = (W_0, R_0, \Vdash_0)$ and $M_1  = (W_1, R_1, \Vdash_1)$ be Kripke models, $w_0 \in W_0$ and $w_1 \in W_1$, and $n$ be any natural number. 
Then, the following are equivalent: 
\begin{enumerate}
    \item $(M_0, w_0) \xr{n} (M_1, w_1)$. 
    \item $(M_1, w_1) \xrightarrow[n]{(P^-, P^+)} (M_0, w_0)$. 
    \item $(M_1, w_1) \Vdash_1 \Cj{n}[M_0, w_0]$. 
    \item $(M_0, w_0) \Vdash_0 \Cjc{n}[M_1, w_1]$. 
\end{enumerate}
\end{lem}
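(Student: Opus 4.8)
The plan is to prove the equivalence of the four statements by establishing a cycle of implications, relying on the analogy with Lemma~\ref{lem:equiv} from the classical case and on the definitions in Definition~\ref{defn:basic2}. The key observation is the duality between positive and negative variables: negating a formula swaps $v^+$ and $v^-$, so a $(P^+,P^-)$-formula is turned into a $(P^-,P^+)$-formula by prefixing $\neg$, and this negation preserves modal depth. I would first record this duality explicitly, since it drives the equivalence of (1) and (2): if $(M_0,w_0)\xr{n}(M_1,w_1)$ and $\varphi$ is a $(P^-,P^+)$-formula of depth at most $n$ with $(M_1,w_1)\Vdash_1\varphi$, then $\neg\varphi$ is a $(P^+,P^-)$-formula of the same depth, so from $(M_1,w_1)\not\Vdash_1\neg\varphi$ and the contrapositive of (1) one gets $(M_0,w_0)\not\Vdash_0\neg\varphi$, i.e.\ $(M_0,w_0)\Vdash_0\varphi$. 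This yields $(2)$, and the symmetric argument gives the converse.

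Next I would treat the equivalence of (1) and (3). The implication from (1) to (3) is immediate: each conjunct $\varphi\in\Fm{n}$ appearing in $\Cj{n}[M_0,w_0]$ satisfies $(M_0,w_0)\Vdash_0\varphi$ by definition, so by (1) we get $(M_1,w_1)\Vdash_1\varphi$ for every such conjunct, whence $(M_1,w_1)\Vdash_1\Cj{n}[M_0,w_0]$. For the converse, suppose (3) holds and let $\psi$ be any $(P^+,P^-)$-formula with $d(\psi)\le n$ and $(M_0,w_0)\Vdash_0\psi$. By the defining property of $\Fm{n}$ there is a $\rho\in\Fm{n}$ with $\K\vdash\psi\leftrightarrow\rho$; since $\K$-provable equivalences are valid in every Kripke model, $(M_0,w_0)\Vdash_0\rho$, so $\rho$ is one of the conjuncts of $\Cj{n}[M_0,w_0]$, and from (3) we obtain $(M_1,w_1)\Vdash_1\rho$, hence $(M_1,w_1)\Vdash_1\psi$. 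This establishes (1). The equivalence of (2) and (4) is exactly the same argument with the roles of the two models swapped and $(P^+,P^-)$ replaced by $(P^-,P^+)$, so it needs no separate work once the general pattern is in place.

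With (1)$\Leftrightarrow$(2), (1)$\Leftrightarrow$(3), and (2)$\Leftrightarrow$(4) in hand the four conditions are all equivalent, closing the cycle. The only point requiring a little care—and the step I would flag as the main potential obstacle—is the use of the soundness of $\K$ in the converse direction of (1)$\Leftrightarrow$(3): one must be sure that $\K\vdash\psi\leftrightarrow\rho$ transfers to satisfaction at an arbitrary point of an arbitrary Kripke model, which is precisely the soundness of $\K$ with respect to the full class of Kripke models. Because the $\Fm{n}$ are defined up to $\K$-provable equivalence (rather than, say, $\SF$-provable equivalence), this soundness is available without any frame conditions, so the argument goes through uniformly. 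Everything else is a routine unwinding of Definition~\ref{defn:basic2} together with the negation duality, and since the statement is flagged as an adaptation of \cite[Proposition~6]{Kurahashi20}, I would keep the write-up brief and, as with Lemma~\ref{lem:equiv}, note that it may be used freely thereafter.
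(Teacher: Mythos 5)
Your proof is correct: the negation duality $v^\pm(\neg\varphi)=v^\mp(\varphi)$ gives $(1)\Leftrightarrow(2)$, the defining property of $\Fm{n}$ together with soundness of $\K$ over all Kripke models gives $(1)\Leftrightarrow(3)$ and $(2)\Leftrightarrow(4)$, and these close the cycle. The paper omits the proof entirely (treating it, like Lemma \ref{lem:equiv}, as a routine adaptation of \cite[Proposition 6]{Kurahashi20}), and your argument is precisely the intended verification.
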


\begin{defn}[p-morphisms]
    Let $F = (W, R)$ and $F'  = (W', R')$ be Kripke frames. 
    A mapping $f: W \to W'$ is called a \emph{p-morphism} iff the following conditions hold: 
    \begin{itemize}
        \item For any $x, y \in W$, if $x R y$, then $f(x) R' f(y)$. 
        \item For any $x \in W$ and $w \in W'$, if $f(x) R' w$, then there exists $z \in W$ such that $x R z$ and $f(z) = w$. 
    \end{itemize}
\end{defn}

We introduce a key notion in this paper.

\begin{defn}[$n$-IP]\label{def:IP}
    Let $\mathcal{C}$ be a class of Kripke models and $n$ be a natural number. 
    We say that $\mathcal{C}$ enjoys \emph{$n$-IP} iff for any finite sets $P^+$ and $P^-$ of propositional variables, any Kripke models $M_0 = (W_0, R_0, \Vdash_0)$ and $M_1  = (W_1, R_1, \Vdash_1)$ in $\mathcal{C}$, and any elements $w_0 \in W_0$ and $w_1 \in W_1$, if $(M_0, w_0) \xr{n} (M_1, w_1)$, then there exist a Kripke frame $\mathcal{F}^* = (W^*, R^*)$, an element $w^* \in W^*$, and two $p$-morphisms $f_0: W^* \to W_0$ and $f_1: W^* \to W_1$ satisfying the following three conditions:
    \begin{enumerate}
        \item all Kripke models based on $\mathcal{F}^*$ are in $\mathcal{C}$, 
        \item $f_0(w^*) = w_0$ and $f_1(w^*) = w_1$, 
        \item for any $x^* \in W^*$, we have $(M_0, f_0(x^*)) \xr{0} (M_1, f_1(x^*))$. 
    \end{enumerate}
\end{defn}

We are ready to prove our main instrument in the present paper.
The following theorem is an adaptation of Theorem 2 in \cite{Kurahashi20} for the purpose of this paper, and it also extends the proof of Theorem \ref{Cl_ULIP} to modal logic. 
We note that our sufficient condition for ULIP resembles to Marx's condition on bisimulation products for CIP \cite{Marks99}. 

\begin{thm}\label{thm:main}
If a modal logic $L$ is sound and complete with respect to some class $\mathcal{C}$ of Kripke models enjoying $n$-IP for some natural number $n$, then $L$ has ULIP. 
\end{thm}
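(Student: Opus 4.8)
The plan is to replicate the proof of Theorem \ref{Cl_ULIP} in the modal setting, replacing truth assignments by pointed Kripke models in $\mathcal{C}$, replacing the combinatorial Theorem \ref{basis} by the hypothesis that $\mathcal{C}$ enjoys $n$-IP, and bridging syntax and semantics through the soundness and completeness of $L$ with respect to $\mathcal{C}$. Given a formula $\varphi$ and finite sets $P^+, P^-$, I set $P_0^\circ := P^\circ$ and $P_1^\circ := v^\circ(\varphi)\setminus P^\circ$ for $\circ\in\{+,-\}$, and take
\[
\theta := \bigwedge\{\psi\in\Phi^{(P_1^+,P_1^-)}_n : L\vdash\varphi\to\psi\}.
\]
Since $\theta$ is a conjunction of $(P_1^+,P_1^-)$-formulas each provably implied by $\varphi$, conditions (1) and (2) of ULIP are immediate (using $P_1^\circ = v^\circ(\varphi)\setminus P^\circ$). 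The whole difficulty lies in condition (3), for which I argue contrapositively: given $\rho$ with $v^\circ(\rho)\cap P^\circ=\emptyset$ and $L\nvdash\theta\to\rho$, I must produce a pointed model in $\mathcal{C}$ refuting $\varphi\to\rho$. Setting $P_2^\circ := v^\circ(\rho)\setminus v^\circ(\varphi)$ makes $P_0^\circ, P_1^\circ, P_2^\circ$ pairwise disjoint, and $\varphi$ becomes a $(P_0^+\cup P_1^+, P_0^-\cup P_1^-)$-formula while $\rho$ becomes a $(P_1^+\cup P_2^+, P_1^-\cup P_2^-)$-formula, exactly as in Theorem \ref{Cl_ULIP}.

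First I would run the characteristic-formula argument. By completeness, $L\nvdash\theta\to\rho$ yields $M_1\in\mathcal{C}$ and $w_1$ with $(M_1,w_1)\Vdash\theta$ and $(M_1,w_1)\nVdash\rho$. The formula $\neg\chi^{(P_1^-,P_1^+)}_n[M_1,w_1]$ is a $(P_1^+,P_1^-)$-formula of modal depth at most $n$ that fails at $(M_1,w_1)$, so $(M_1,w_1)$ refutes $\theta\to\neg\chi^{(P_1^-,P_1^+)}_n[M_1,w_1]$; by the defining properties of $\Phi^{(P_1^+,P_1^-)}_n$ and of $\theta$, this forces $L\nvdash\varphi\to\neg\chi^{(P_1^-,P_1^+)}_n[M_1,w_1]$. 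Completeness then gives $M_0\in\mathcal{C}$ and $w_0$ with $(M_0,w_0)\Vdash\varphi$ and $(M_0,w_0)\Vdash\chi^{(P_1^-,P_1^+)}_n[M_1,w_1]$, whence $(M_0,w_0)\xrightarrow[n]{(P_1^+,P_1^-)}(M_1,w_1)$ by Lemma \ref{lem:equiv2}. This is precisely the hypothesis of $n$-IP instantiated at $(P_1^+,P_1^-)$, so I obtain a frame $\mathcal{F}^*=(W^*,R^*)$, a point $w^*$, and p-morphisms $f_0\colon W^*\to W_0$, $f_1\colon W^*\to W_1$ with $f_0(w^*)=w_0$, $f_1(w^*)=w_1$, all models over $\mathcal{F}^*$ lying in $\mathcal{C}$, and the pointwise matching $(M_0,f_0(x^*))\xrightarrow[0]{(P_1^+,P_1^-)}(M_1,f_1(x^*))$ for every $x^*$.

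Next I would define a valuation $\Vdash^*$ on $\mathcal{F}^*$ by deciding each variable at each point $x^*$ through Table \ref{table:Assignment}, reading $\mathsf{V}_0(p)$ as the truth value of $p$ at $(M_0,f_0(x^*))$ and $\mathsf{V}_1(p)$ as its truth value at $(M_1,f_1(x^*))$; the resulting model $M^*$ lies in $\mathcal{C}$ by condition (1) of $n$-IP. The core is then to establish, by induction on formula construction, four pointwise transfer statements valid for every $x^*$: (A$^+$) $(M_0,f_0(x^*))\Vdash\alpha$ implies $(M^*,x^*)\Vdash^*\alpha$ for $(P_0^+\cup P_1^+, P_0^-\cup P_1^-)$-formulas $\alpha$, together with its polarity-dual (A$^-$); and dually (B$^+$) $(M^*,x^*)\Vdash^*\alpha$ implies $(M_1,f_1(x^*))\Vdash\alpha$ for $(P_1^+\cup P_2^+, P_1^-\cup P_2^-)$-formulas, with its dual (B$^-$). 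The base case for variables is literally the base case of Theorem \ref{basis} carried out at the single point $x^*$, since the pointwise matching supplied by condition (3) of $n$-IP plays exactly the role that $\mathsf{V}_0\xrightarrow[0]{(P_1^+,P_1^-)}\mathsf{V}_1$ played there, and the Boolean cases are local and unchanged.

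The main obstacle is the new $\Box$ case, where I must combine the two halves of the p-morphism condition: for the forward direction I use that $x^*R^*y^*$ entails $f_i(x^*)R_if_i(y^*)$, and for the backward direction the lifting property that every $R_i$-successor of $f_i(x^*)$ equals $f_i(y^*)$ for some $R^*$-successor $y^*$ of $x^*$, each of (A$^\pm$) and (B$^\pm$) invoking one of these at the $\Box$ step. Crucially, the depth of $\alpha$ here is unrestricted, the bound $n$ having already been absorbed at the frame-construction stage. Finally, (A$^+$) applied to $\varphi$ at $w^*$ gives $(M^*,w^*)\Vdash^*\varphi$, and the contrapositive of (B$^+$) applied to $\rho$ at $w^*$ gives $(M^*,w^*)\nVdash^*\rho$; hence $(M^*,w^*)\nVdash^*\varphi\to\rho$, and soundness yields $L\nvdash\varphi\to\rho$, discharging condition (3). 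I expect the verification that these transfer statements hold for formulas of arbitrary modal depth — in particular that no residual interaction between $n$ and the depth of $\rho$ is required — to be the subtle point demanding the most care.
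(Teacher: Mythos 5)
Your proposal is correct and follows essentially the same route as the paper's proof: the same choice of $\theta$, the same contrapositive reduction via the characteristic formula $\chi_n^{(P_1^-,P_1^+)}[M_1,w_1]$ to obtain $(M_0,w_0)\xrightarrow[n]{(P_1^+,P_1^-)}(M_1,w_1)$, the same invocation of $n$-IP, the same pointwise definition of $\Vdash^*$ via the assignment of Theorem \ref{basis}, and the same depth-unbounded induction with the $\Box$ case handled by the two halves of the p-morphism condition. Your closing observation that the bound $n$ is absorbed entirely at the frame-construction stage is exactly how the paper's argument works.
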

\begin{proof}
Suppose that a class $\mathcal{C}$ of Kripke models enjoys $n$-IP and that $L$ is sound and complete with respect to $\mathcal{C}$. 
Let $\varphi$ be any formula and $P^+$ and $P^-$ be any finite sets of propositional variables. 
Let $P_0^\circ : = P^\circ$ and $P_1^\circ : = v^\circ(\varphi) \setminus P^\circ$ for $\circ \in \{+, -\}$. 
Let $\theta$ be the $(P_1^+, P_1^-)$-formula
\[
    \bigwedge \{\psi \in \Phi_n^{(P_1^+, P_1^-)} \mid L \vdash \varphi \to \psi\}.
\]
We prove that $\theta$ is a uniform Lyndon interpolant of $(\varphi, P^+, P^-)$ in $L$. 
The conditions $L \vdash \varphi \to \theta$ and $v^\circ(\theta) \subseteq v^\circ(\varphi) \setminus P^\circ$ for $\circ \in \{+, -\}$ are easily verified. 
Let $\rho$ be any formula such that $L \nvdash \theta \to \rho$ and $v^\circ(\rho) \cap P^\circ = \emptyset$ for $\circ \in \{+, -\}$. 
We prove $L \nvdash \varphi \to \rho$. 
Let $P_2^\circ : = v^\circ(\rho) \setminus v^\circ(\varphi)$ for $\circ \in \{+, -\}$. 
Since $v^\circ(\rho) \cap P^\circ = \emptyset$, we have that $P_0^\circ$, $P_1^\circ$, and $P_2^\circ$ are pairwise disjoint for $\circ \in \{+, -\}$. 

Since $L \nvdash \theta \to \rho$, by the completeness, there exists a Kripke model $M_1 = (W_1, R_1, \Vdash_1)$ in $\mathcal{C}$ and $w_1 \in W_1$ such that $(M_1, w_1) \Vdash_1 \theta$ and $(M_1, w_1) \nVdash_1 \rho$. 
Since $(M_1, w_1) \Vdash_1 \chi_n^{(P_1^-, P_1^+)}[M_1, w_1]$, we have
\[
    (M_1, w_1) \nVdash_1 \theta \to \neg \chi_n^{(P_1^-, P_1^+)}[M_1, w_1].
\]
By the soundness, $L \nvdash \theta \to \neg \chi_n^{(P_1^-, P_1^+)}[M_1, w_1]$. 
Since $\neg \chi_n^{(P_1^-, P_1^+)}[M_1, w_1]$ is a $(P_1^+, P_1^-)$-formula, by the definition of $\theta$, we obtain $L \nvdash \varphi \to \neg \chi_n^{(P_1^-, P_1^+)}[M_1, w_1]$. 
By the completeness, there exists a Kripke model $M_0 = (W_0, R_0, \Vdash_0)$ in $\mathcal{C}$ and $w_0 \in W_0$ such that $(M_0, w_0) \Vdash_0 \varphi$ and $(M_0, w_0) \Vdash_0 \chi_n^{(P_1^-, P_1^+)}[M_1, w_1]$. 
By Lemma \ref{lem:equiv2}, we get $(M_0, w_0) \xrightarrow[n]{(P_1^+, P_1^-)} (M_1, w_1)$. 

Since $\mathcal{C}$ enjoys $n$-IP, there exists a Kripke frame $\mathcal{F}^* = (W^*, R^*)$, an element $w^* \in W^*$, and two $p$-morphisms $f_0: W^* \to W_0$ and $f_1: W^* \to W_1$ satisfying the following three conditions:
    \begin{enumerate}
        \item all Kripke models based on $\mathcal{F}^*$ are in $\mathcal{C}$, 
        \item $f_0(w^*) = w_0$ and $f_1(w^*) = w_1$, 
        \item for any $x^* \in W^*$, we have $(M_0, f_0(x^*)) \xrightarrow[0]{(P_1^+, P_1^-)} (M_1, f_1(x^*))$. 
    \end{enumerate}

We define a Kripke model $M^* = (W^*, R^*, \Vdash^*)$ as follows: \\
Let $x^* \in W^*$.
Since $(M_0, f_0(x^*)) \xrightarrow[0]{(P_1^+, P_1^-)} (M_1, f_1(x^*))$, it follows from Theorem \ref{basis} that there exists a truth assignment $\mathsf{V}^*$ such that for any formula $\psi$ with $d(\psi) = 0$, the following two conditions hold: 
    \begin{enumerate}
        \item If $\psi$ is a $(P_0^+ \cup P_1^+, P_0^- \cup P_1^-)$-formula and $(M_0, f_0(x^*)) \Vdash_0 \psi$, then $\mathsf{V}^*(\psi) = 1$. 
        \item If $\psi$ is a $(P_1^+ \cup P_2^+, P_1^- \cup P_2^-)$-formula and $\mathsf{V}^*(\psi) = 1$, then $(M_1, f_1(x^*)) \Vdash_1 \psi$. 
    \end{enumerate}
    Let $(M^*, x^*) \Vdash^* p: \iff \mathsf{V}^*(p) = 1$. 

    By the first condition of $n$-IP, we have $M^* \in \mathcal{C}$. 

We prove that for any formula $\psi$ and any $x^* \in W^*$, 
\begin{enumerate}
        \item If $\psi$ is a $(P_0^+ \cup P_1^+, P_0^- \cup P_1^-)$-formula and $(M_0, f_0(x^*)) \Vdash_0 \psi$, then $(M^*, x^*) \Vdash^* \psi$. 
        \item If $\psi$ is a $(P_0^- \cup P_1^-, P_0^+ \cup P_1^+)$-formula and $(M_0, f_0(x^*)) \nVdash_0 \psi$, then $(M^*, x^*) \nVdash^* \psi$. 
\end{enumerate}
We simultaneously prove these two statements by induction on the construction of $\psi$.  
The case that $d(\psi) = 0$ directly follows from Theorem \ref{basis}. 
The cases of propositional connectives easily follow from the induction hypothesis. 
So we give only the proof of the case that $\psi$ is of the form $\Box \xi$. 

\medskip

1. Suppose that $\Box \xi$ is a $(P_0^+ \cup P_1^+, P_0^- \cup P_1^-)$-formula and $(M^*, x^*) \nVdash^* \Box \xi$. 
Then, there exists $y^* \in W^*$ such that $x^* R^* y^*$ and $(M^*, y^*) \nVdash^* \xi$. 
Since $\xi$ is also a $(P_0^+ \cup P_1^+, P_0^- \cup P_1^-)$-formula, by the induction hypothesis, we have $(M_0, f_0(y^*)) \nVdash_0 \xi$. 
We have $f_0(x^*) R_0 f_0(y^*)$ because $f_0$ is a p-morphism, and so we obtain $(M_0, f_0(x^*)) \nVdash_0 \Box \xi$. 

\medskip

2. Suppose that $\Box \xi$ is a $(P_0^- \cup P_1^-, P_0^+ \cup P_1^+)$-formula and $(M_0, f_0(x^*)) \nVdash_0 \Box \xi$. 
There exists $y_0 \in W_0$ such that $f_0(x^*) R_0 y_0$ and $(M_0, y_0) \nVdash_0 \xi$. 
Since $f_0$ is a p-morphism, there exists $y^* \in W^*$ such that $x^* R^* y^*$ and $f_0(y^*) = y_0$. 
Since $\xi$ is also a $(P_0^- \cup P_1^-, P_0^+ \cup P_1^+)$-formula, by the induction hypothesis, we have $(M^*, y^*) \nVdash^* \xi$. 
Hence, $(M^*, x^*) \nVdash^* \Box \xi$. 

\medskip

In the same way, we can prove that for any formula $\psi$ and any $x^* \in W^*$, 
\begin{itemize}
        \item If $\psi$ is a $(P_1^+ \cup P_2^+, P_1^- \cup P_2^-)$-formula and $(M^*, x^*) \Vdash^* \psi$, then $(M_1, f_1(x^*)) \Vdash_1 \psi$. 
\end{itemize}
Since $\varphi$ is a $(P_0^+ \cup P_1^+, P_0^- \cup P_1^-)$-formula and $(M_0, w_0) = (M_0, f_0(w^*)) \Vdash_0 \varphi$, we have $(M^*, w^*) \Vdash^* \varphi$. 
Also, since $\rho$ is a $(P_1^+ \cup P_2^+, P_1^- \cup P_2^-)$-formula and $(M_1, w_1) = (M_1, f_1(w^*)) \nVdash_1 \rho$, we obtain $(M^*, w^*) \nVdash^* \rho$. 
Therefore, $(M^*, w^*) \nVdash^* \varphi \to \rho$. 
By the soundness, we conclude $L \nvdash \varphi \to \rho$. 
\end{proof}

\begin{rem}\label{labeling}
In Definition \ref{def:IP}, if we label each $x^* \in W^*$ with the pair $(f_0(x^*), f_1(x^*))$ and we roughly write $x^*$ as its label $(f_0(x^*), f_1(x^*))$, then the conditions required for $\mathcal{F}^*$ in the definition can be rewritten as follows without mentioning p-morphisms:
\begin{enumerate}
    \item all Kripke models based on $\mathcal{F}^*$ are in $\mathcal{C}$, 
    \item $(w_0, w_1) \in W^*$, 
    \item if $(x_0, x_1) R^* (y_0, y_1)$, then $x_0 R_0 y_0$ and $x_1 R_1 y_1$, 
    \item if $(x_0, x_1) \in W^*$ and $x_0 R_0 y_0$, then $(x_0, x_1) R^* (y_0, y_1)$ for some $y_1 \in W_1$, 
    \item if $(x_0, x_1) \in W^*$ and $x_1 R_1 y_1$, then $(x_0, x_1) R^* (y_0, y_1)$ for some $y_0 \in W_0$, 
    \item if $(x_0, x_1) \in W^*$, then $(M_0, x_0) \xr{0} (M_1, x_1)$. 
\end{enumerate}
Since there may be distinct elements of $W^*$ with the same label, the above description is somewhat imprecise. 
For example, Clause 4 should be precisely as follows:
\begin{itemize}
    \item if $x^* \in W^*$ has a label $(x_0, x_1)$ and $x_0 R_0 y_0$, then there exist $y^* \in W^*$ and $y_1 \in W_1$ such that $y^*$ has the label $(y_0, y_1)$ and $x^* R^* y^*$. 
\end{itemize}
Since this rough labeling notation makes it easy to check that a given model meets the required conditions, in the subsequent sections, we will adopt this notation to describe the elements of $W^*$. 
\end{rem}

\section{Some lemmas on matching between clusters}\label{sec:lemmas}

Before actually proving ULIP for several logics using the method developed in the last section, in this section, we present some lemmas on matching between clusters, which will be used in our proofs of ULIP.  

For each Kripke frame $(W, R)$, we say that a subset $I \subseteq W$ is a \emph{cluster} iff $I$ is an equivalence class of the equivalence relation $R^s$ defined by $x R^s y : \iff x R y\ \&\ y R x$. 
All the modal logics for which we prove their ULIP in this paper are extensions of $\Gamma(\LP_2, \omega, 1)$, and so we will basically deal only with finite Kripke models $(W, R, \Vdash)$ such that $W$ may have or may not have the unique root element $x$ and $W \setminus \{x\}$ is a disjoint union of finitely many clusters of final elements. 
For two such models $M_0 = (W_0, R_0, \Vdash_0)$, $M_1 = (W_1, R_1, \Vdash_1)$ and elements $w_0 \in W_0$, $w_1 \in W_1$, we discuss some conclusions that can be obtained from the assumption that $(M_0, w_0) \xr{n} (M_1, w_1)$ holds. 
In this section, for $i \in \{0, 1\}$, let $x_i$ be the root element of $M_i$ and let $C_i^0, C_i^1, \ldots, C_i^{j_i}$ be all the clusters of final elements of $M_i$. 
The model $M_i$ is visualized in Figure \ref{fig:models}: 

\begin{figure}[ht]
\centering
\begin{tikzpicture}
\draw (-3, 2) circle (0.7 and 0.5);
\draw (-1, 2) circle (0.7 and 0.5);
\draw (3, 2) circle (0.7 and 0.5);
\draw (1, 2) node{$\cdots$};
\draw (-3, 2.5) node[above]{$C_i^0$};
\draw (-1, 2.5) node[above]{$C_i^1$};
\draw (3, 2.5) node[above]{$C_i^{j_i}$};

\fill (-3.5, 2) circle (2pt);
\fill (-3, 2) circle (2pt);
\draw (-2.5, 2) node{$\cdots$};

\fill (0, 0) circle (2pt);
\draw (-0.3, 0) node[left]{$x_i$};

\draw[<->] (-3.4,2)--(-3.1, 2);
\draw[->] (0,0)--(-3.5, 1.9);
\draw[->] (0,0)--(-3, 1.9);
\draw[->] (0,0)--(-1.5, 1.9);
\draw[->] (0,0)--(-1, 1.9);
\draw[->] (0,0)--(2.5, 1.9);
\draw[->] (0,0)--(3, 1.9);
\end{tikzpicture}
\caption{The model $M_i = (W_i, R_i, \Vdash_i)$}\label{fig:models}
\end{figure}
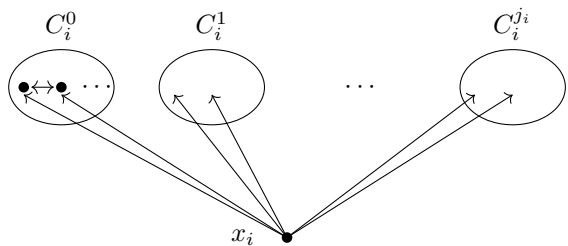

\begin{defn}\label{match}
We say that a cluster $C_0^k$ of $M_0$ \emph{matches} a cluster $C_1^l$ of $M_1$ iff the following two conditions hold:
\begin{enumerate}
    \item For any $u_0 \in C_0^k$, there exists $u_1 \in C_1^l$ such that $(M_0, u_0) \xr{0} (M_1, u_1)$.
    \item For any $u_1 \in C_1^l$, there exists $u_0 \in C_0^k$ such that $(M_0, u_0) \xr{0} (M_1, u_1)$.
\end{enumerate}
\end{defn}

\begin{lem}\label{lem:match_1}
For $w_0 \in C_0^k$ and $w_1 \in C_1^l$, if $(M_0, w_0) \xr{1} (M_1, w_1)$, then $C_0^k$ matches $C_1^l$. 
\end{lem}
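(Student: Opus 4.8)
The plan is to establish the two clauses of Definition \ref{match} separately, exploiting the fact that $w_0 \in C_0^k$ and $w_1 \in C_1^l$ are \emph{final} elements: since the frames are reflexive and transitive and these clusters are final, every element reachable from $w_0$ lies in $C_0^k$ (and likewise for $w_1$ and $C_1^l$), while conversely $w_0$ is $R_0$-related to every member of $C_0^k$ and $w_1$ to every member of $C_1^l$. The idea is that a formula of the form $\Diamond \chi$ with $d(\chi) = 0$ is a probe of depth $\leq 1$ that can detect an individual element of a final cluster, and, since $\Diamond \psi$ abbreviates $\neg \Box \neg \psi$, one checks easily that $v^\circ(\Diamond \psi) = v^\circ(\psi)$, so such a probe preserves the $(P^+, P^-)$-profile of $\chi$. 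Throughout I would pass between the arrow relations and satisfaction of characteristic formulas by Lemma \ref{lem:equiv2}.

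For Clause 1, I fix $u_0 \in C_0^k$ and consider $\Diamond \Cj{0}[M_0, u_0]$, which is a $(P^+, P^-)$-formula of depth $\leq 1$. Because $w_0 R_0 u_0$ and $(M_0, u_0) \Vdash_0 \Cj{0}[M_0, u_0]$, this formula holds at $w_0$; the hypothesis $(M_0, w_0) \xr{1} (M_1, w_1)$ then transfers it to $w_1$. Hence there is $u_1$ with $w_1 R_1 u_1$ and $(M_1, u_1) \Vdash_1 \Cj{0}[M_0, u_0]$, and finality of $w_1$ forces $u_1 \in C_1^l$. By Lemma \ref{lem:equiv2}, $(M_1, u_1) \Vdash_1 \Cj{0}[M_0, u_0]$ is exactly $(M_0, u_0) \xr{0} (M_1, u_1)$, which is what Clause 1 requires.

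For Clause 2 I would run the symmetric argument in the dual direction. By Lemma \ref{lem:equiv2}, the hypothesis is equivalent to $(M_1, w_1) \xrightarrow[1]{(P^-, P^+)} (M_0, w_0)$. Given $u_1 \in C_1^l$, I consider the $(P^-, P^+)$-formula $\Diamond \Cjc{0}[M_1, u_1]$ of depth $\leq 1$. It holds at $w_1$ since $w_1 R_1 u_1$, so it transfers to $w_0$, producing $u_0$ with $w_0 R_0 u_0$ and $(M_0, u_0) \Vdash_0 \Cjc{0}[M_1, u_1]$; finality of $w_0$ gives $u_0 \in C_0^k$, and Lemma \ref{lem:equiv2} again turns this into $(M_0, u_0) \xr{0} (M_1, u_1)$.

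The argument is essentially symmetric under swapping the two models and the roles of $(P^+, P^-)$, so I do not expect a serious obstacle. The only points requiring care are the bookkeeping that $\Diamond$ preserves $v^\circ$ and keeps the modal depth at $\leq 1$, and the use of finality to guarantee that the witnesses $u_1$ and $u_0$ land in $C_1^l$ and $C_0^k$ respectively rather than in some other cluster. Everything else is a routine application of Lemma \ref{lem:equiv2}.
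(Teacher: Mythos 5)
Your proposal is correct and follows essentially the same route as the paper's own proof: probe each element of one cluster with the depth-$1$ formula $\Diamond \Cj{0}[M_0, u_0]$ (resp.\ $\Diamond \Cjc{0}[M_1, u_1]$), transfer it across the hypothesis (using Lemma \ref{lem:equiv2} to reverse direction for the second clause), and use finality to place the witness in the right cluster. The extra bookkeeping you flag (that $\Diamond$ preserves $v^\circ$ and modal depth $\leq 1$, and the cluster membership of the witnesses) is exactly what the paper uses implicitly.
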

\begin{proof}
Suppose $(M_0, w_0) \xr{1} (M_1, w_1)$. 
We prove the two clauses of Definition \ref{match}. 

1. For each $u_0 \in C_0^k$, since $(M_0, u_0) \Vdash_0 \Cj{0}[M_0, u_0]$, we have $(M_0, w_0) \Vdash_0 \Diamond \Cj{0}[M_0, u_0]$ because $w_0 R_0 u_0$. 
Since $(M_0, w_0) \xr{1} (M_1, w_1)$, this formula is also true in $(M_1, w_1)$. 
Thus, we find some $u_1 \in C_1^l$ such that $(M_1, u_1) \Vdash_1 \Cj{0}[M_0, u_0]$. 
By Lemma \ref{lem:equiv2}, we obtain $(M_0, u_0) \xr{0} (M_1, u_1)$.

2. For each $u_1 \in C_1^l$, we have $(M_1, w_1) \Vdash_1 \Diamond \Cjc{0}[M_1, u_1]$. 
Since $(M_0, w_0) \xr{1} (M_1, w_1)$, this formula is also true in $(M_0, w_0)$. 
Then as above, we find some $u_0 \in C_0^k$ such that $(M_0, u_0) \xr{0} (M_1, u_1)$.
\end{proof}

\begin{lem}\label{lem:match_2}
For $w_0 \in C_0^k$, if $(M_0, w_0) \xr{2} (M_1, x_1)$, then $C_0^k$ matches all the clusters $C_1^l$ of $M_1$. 
\end{lem}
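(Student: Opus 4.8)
The plan is to reduce the claim to producing, for each final cluster $C_1^l$ of $M_1$, a single world $z \in C_0^k$ satisfying a depth-$1$ formula that completely describes $C_1^l$, and then to read off both clauses of Definition \ref{match} from that formula. The whole argument is driven by dualizing the hypothesis: by Lemma \ref{lem:equiv2}, $(M_0, w_0) \xr{2} (M_1, x_1)$ is the same as $(M_1, x_1) \xrightarrow[2]{(P^-, P^+)} (M_0, w_0)$, so every $(P^-, P^+)$-formula of depth at most $2$ true at the root $x_1$ is inherited by $w_0$. The two structural facts I will exploit repeatedly are that $x_1$, being the root, sees every world of $M_1$, while $w_0$, being a final element, sees nothing outside its own cluster $C_0^k$.

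First I would fix a final cluster $C_1^l$ and introduce the formula
$$\gamma_l := \Box\Bigl(\bigvee_{v \in C_1^l} \Cjc{0}[M_1, v]\Bigr) \wedge \bigwedge_{v \in C_1^l} \Diamond\, \Cjc{0}[M_1, v].$$
Since $\Box$ and $\Diamond$ preserve polarity, $\gamma_l$ is a $(P^-, P^+)$-formula with $d(\gamma_l) = 1$. A direct check shows $(M_1, v) \Vdash_1 \gamma_l$ for every $v \in C_1^l$: as $v$ is final it sees exactly $C_1^l$, the left conjunct holds because each such world $w$ satisfies its own $\Cjc{0}[M_1, w]$, and the right conjunct holds because $v$ sees every $v' \in C_1^l$. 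Hence $(M_1, x_1) \Vdash_1 \Diamond \gamma_l$, and $\Diamond \gamma_l$ is a $(P^-, P^+)$-formula of depth $2$. Transferring it along $(M_1, x_1) \xrightarrow[2]{(P^-, P^+)} (M_0, w_0)$ yields $(M_0, w_0) \Vdash_0 \Diamond \gamma_l$; since $w_0$ is final, its $\Diamond$-witness $z$ must lie in $C_0^k$, so I obtain $z \in C_0^k$ with $(M_0, z) \Vdash_0 \gamma_l$.

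With such a $z$ in hand, both matching clauses fall out of the finality of $z$ together with Lemma \ref{lem:equiv2}, which identifies $(M_0, u_0) \xr{0} (M_1, u_1)$ with $(M_0, u_0) \Vdash_0 \Cjc{0}[M_1, u_1]$. For clause 1, any $u_0 \in C_0^k$ is seen by $z$, so the left conjunct of $\gamma_l$ forces $(M_0, u_0) \Vdash_0 \Cjc{0}[M_1, v]$ for some $v \in C_1^l$, i.e.\ $(M_0, u_0) \xr{0} (M_1, v)$. For clause 2, the right conjunct of $\gamma_l$ provides, for each $v \in C_1^l$, a world $z'$ with $z R_0 z'$ and $(M_0, z') \Vdash_0 \Cjc{0}[M_1, v]$; finality of $z$ puts $z' \in C_0^k$, and again $(M_0, z') \xr{0} (M_1, v)$. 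As $C_1^l$ was arbitrary, $C_0^k$ matches every cluster of $M_1$.

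The step I expect to be delicate is the design of $\gamma_l$: it must be a single bounded-depth formula of the correct polarity that simultaneously pins down membership in the target cluster $C_1^l$ and survives the one-directional transfer afforded by the depth-$2$ hypothesis. The two uses of finality — of $w_0$ to localize the $\Diamond \gamma_l$-witness inside $C_0^k$, and of $z$ to localize the internal witnesses — are precisely what keep every produced world in the intended cluster; everything else is routine polarity/depth bookkeeping plus applications of Lemma \ref{lem:equiv2}.
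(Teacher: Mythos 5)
Your proof is correct, but it takes a genuinely different route from the paper's. The paper picks a single element $u_1 \in C_1^l$ and uses its depth-$1$ characteristic formula: since $(M_1, x_1) \Vdash_1 \Diamond \Cjc{1}[M_1, u_1]$ is a depth-$2$ $(P^-,P^+)$-fact at the root, it transfers to $w_0$, whose finality places the witness $u_0$ inside $C_0^k$ with $(M_0, u_0) \xr{1} (M_1, u_1)$; Lemma \ref{lem:match_1} then immediately yields that $C_0^k$ matches $C_1^l$. You instead hand-build the depth-$1$ formula $\gamma_l$ describing the entire cluster $C_1^l$ and extract both clauses of Definition \ref{match} directly from a single world $z \in C_0^k$ satisfying it, never invoking Lemma \ref{lem:match_1}. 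The two arguments are closely related --- both conjuncts of your $\gamma_l$ are (up to $\K$-equivalence) among the conjuncts of $\Cjc{1}[M_1, u_1]$, so your construction essentially inlines the proof of Lemma \ref{lem:match_1} into an explicit formula --- but yours is self-contained at the cost of a longer formula and a bit more bookkeeping, while the paper's buys brevity by reusing the already-established depth-$1$ matching lemma. All the delicate points you flag (polarity and depth of $\gamma_l$, the two uses of finality to localize witnesses inside $C_0^k$) are handled correctly.
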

\begin{proof}
Suppose $(M_0, w_0) \xr{2} (M_1, x_1)$. 
Let $C_1^l$ be any cluster of $M_1$ and $u_1$ be any element of $C_1^l$. 
Since $(M_1, u_1) \Vdash_1 \Cjc{1}[M_1, u_1]$, we have $(M_1, x_1) \Vdash_1 \Diamond \Cjc{1}[M_1, u_1]$. 
By the supposition, we have that this formula is also true in $(M_0, w_0)$. 
So, there exists some $u_0 \in C_0^k$ such that $(M_0, u_0) \xr{1} (M_1, u_1)$. 
By Lemma \ref{lem:match_1}, we have that $C_0^k$ matches $C_1^l$. 
\end{proof}

\begin{lem}\label{lem:match_3}
If $(M_0, x_0) \xr{3} (M_1, x_1)$, then the following two properties hold: 
\begin{enumerate}
    \item For any cluster $C_0^k$ of $M_0$, there exists a cluster $C_1^l$ of $M_1$ such that $C_0^k$ matches $C_1^l$. 
    \item For any cluster $C_1^l$ of $M_1$, there exists a cluster $C_0^k$ of $M_0$ such that $C_0^k$ matches $C_1^l$. 
\end{enumerate}
\end{lem}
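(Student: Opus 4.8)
The strategy is to derive each of the two matching properties from the preceding lemmas by descending through the modal depth. For property~1, fix a cluster $C_0^k$ of $M_0$ and pick any $u_0 \in C_0^k$. The plan is to produce, from the hypothesis $(M_0, x_0) \xr{3} (M_1, x_1)$, a witness element $w_0$ in a cluster of $M_1$ together with a depth-$2$ matching relation, so that Lemma~\ref{lem:match_2} can be applied. Concretely, I would consider the formula $\Diamond \Cj{2}[M_0, u_0]$. Since $(M_0, u_0) \Vdash_0 \Cj{2}[M_0, u_0]$ and $x_0 R_0 u_0$, we have $(M_0, x_0) \Vdash_0 \Diamond \Cj{2}[M_0, u_0]$; this formula has modal depth $\leq 3$, so by the hypothesis it is true at $(M_1, x_1)$. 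Hence there is some element $v_1$ with $x_1 R_1 v_1$ and $(M_1, v_1) \Vdash_1 \Cj{2}[M_0, u_0]$, and by Lemma~\ref{lem:equiv2} this gives $(M_0, u_0) \xr{2} (M_1, v_1)$.

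Now $v_1$ lies in some cluster $C_1^l$ of $M_1$, but to invoke Lemma~\ref{lem:match_2} I need the second coordinate to be the \emph{root} $x_1$, not an arbitrary $v_1$. So the roles are reversed relative to Lemma~\ref{lem:match_2}: here the depth-$2$ arrow points from an arbitrary final element $u_0$ of $M_0$ into $M_1$. The cleaner route is to argue symmetrically using the dual formulation. Since $(M_0, u_0) \xr{2} (M_1, v_1)$, by Lemma~\ref{lem:equiv2} we have $(M_1, v_1) \xrightarrow[2]{(P^-, P^+)} (M_0, u_0)$; but $v_1$ need not be a root of $M_1$, so Lemma~\ref{lem:match_2} does not directly apply in this direction either. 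The fix is to obtain the matching one cluster-level down: from $(M_0, u_0) \xr{2} (M_1, v_1)$ with $u_0, v_1$ both final elements, Lemma~\ref{lem:match_1} (applied with $n=1 \leq 2$) already yields that the cluster of $u_0$, namely $C_0^k$, matches the cluster $C_1^l$ of $v_1$. This is the desired witness cluster, so property~1 follows.

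Property~2 is proved by the completely symmetric argument, switching the roles of $M_0$ and $M_1$ and using clause~(2) of Lemma~\ref{lem:equiv2}. Given a cluster $C_1^l$ of $M_1$ and any $u_1 \in C_1^l$, I would start from $(M_1, x_1) \Vdash_1 \Diamond \Cjc{2}[M_1, u_1]$, transfer this depth-$\leq 3$ formula back to $(M_0, x_0)$ using the equivalent form $(M_1, x_1) \xrightarrow[3]{(P^-, P^+)} (M_0, x_0)$ supplied by Lemma~\ref{lem:equiv2}, extract a final element $u_0 \in C_0^k$ with $(M_0, u_0) \xr{2} (M_1, u_1)$, and then apply Lemma~\ref{lem:match_1} to conclude that $C_0^k$ matches $C_1^l$.

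\textbf{Main obstacle.}
The delicate point is bookkeeping about which world occupies the second coordinate of the arrow and whether it is a root or a final element, since Lemma~\ref{lem:match_2} is stated only for the root $x_1$ while the diamond-unwinding naturally lands on an arbitrary final world. I expect the cleanest resolution to be the one above: not to route through Lemma~\ref{lem:match_2} at all, but to drop directly to a depth-$2$ arrow between final elements and finish with Lemma~\ref{lem:match_1}, which only requires a depth-$1$ arrow and imposes no rootedness condition. Verifying that the modal depths stay within the bound $3$ at each unwinding step, and that the direction-reversal via Lemma~\ref{lem:equiv2} is applied with the correct pair $(P^+, P^-)$ versus $(P^-, P^+)$, is the part that needs care but is otherwise routine.
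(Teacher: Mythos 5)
There is a genuine gap, and it sits exactly at the point you flagged as the main obstacle. When you unwind the outer $\Diamond$ of $\Diamond\Cj{2}[M_0,u_0]$ at $(M_1,x_1)$, the witness $v_1$ with $x_1 R_1 v_1$ and $(M_1,v_1)\Vdash_1 \Cj{2}[M_0,u_0]$ need not be a final element: the frames are reflexive, so $v_1$ may be the root $x_1$ itself. Your conclusion then speaks of ``the cluster $C_1^l$ of $v_1$,'' but Definition \ref{match} and Lemma \ref{lem:match_1} concern clusters of \emph{final} elements only, so when $v_1=x_1$ there is no such $C_1^l$ and Lemma \ref{lem:match_1} does not apply; your assertion that $u_0$ and $v_1$ are ``both final elements'' is unjustified for $v_1$. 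The same defect occurs in your argument for property 2, where the extracted $u_0$ may be the root $x_0$ of $M_0$; there the situation is worse, because Lemma \ref{lem:match_2} as stated requires the root of $M_1$ in the \emph{second} coordinate and a final element of $M_0$ in the first, so it cannot be invoked even after a case split without first proving its mirror image.

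The paper's proof avoids this by transferring the formula $\Diamond\Box\bigvee_{u_0\in C_0^k}\Cj{1}[M_0,u_0]$, still of depth $\le 3$. Two features matter. The $\Box$ guarantees that the disjunction holds at \emph{every} successor of the diamond-witness $y_1$, hence at some genuinely final element $u_1$ of $M_1$ regardless of whether $y_1$ is the root; this is what pins the conclusion to a cluster of final elements before Lemma \ref{lem:match_1} is applied. And the disjunction over all of $C_0^k$ is what makes the boxed formula true at the elements of $C_0^k$ in the first place: a single $\Box\Cj{1}[M_0,u_0]$ would generally fail there, since distinct elements of the cluster need not satisfy one another's characteristic formulas. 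Your clause 1 can be repaired by a case split --- if $v_1$ is final use Lemma \ref{lem:match_1}, if $v_1$ is the root use Lemma \ref{lem:match_2} --- but clause 2 would additionally require a symmetric variant of Lemma \ref{lem:match_2} that is not in the paper, so as written the proposal does not go through.
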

\begin{proof}
Suppose $(M_0, x_0) \xr{3} (M_1, x_1)$. 
We give only a proof of the first clause and the second clause can be proved in the same way. 

1. Let $C_0^k$ be any cluster of $M_0$. 
For any $v_0 \in C_0^k$, we have 
\[
    (M_0, v_0) \Vdash_0 \bigvee_{u_0 \in C_0^k} \Cj{1}[M_0, u_0]
\]
and hence 
\[
    (M_0, v_0) \Vdash_0 \Box \bigvee_{u_0 \in C_0^k} \Cj{1}[M_0, u_0].
\]
Then, 
\[
    (M_0, x_0) \Vdash_0 \Diamond \Box \bigvee_{u_0 \in C_0^k} \Cj{1}[M_0, u_0].
\]
Since $(M_0, x_0) \xr{3} (M_1, x_1)$, this formula is also true in $(M_1, x_1)$. 
Then, for some $y_1 \in W_1$, 
\[
    (M_1, y_1) \Vdash_1 \Box \bigvee_{u_0 \in C_0^k} \Cj{1}[M_0, u_0].
\]
Hence, we find some cluster $C_1^l$ of $M_1$ and an element $u_1 \in C_1^l$ such that
\[
    (M_1, u_1) \Vdash_1 \bigvee_{u_0 \in C_0^k} \Cj{1}[M_0, u_0].
\]
Thus, there exists $u_0 \in C_0^k$ such that $(M_0, u_0) \xr{1} (M_1, u_1)$. 
By Lemma \ref{lem:match_1}, we conclude that $C_0^k$ matches $C_1^l$. 
\end{proof}

\begin{lem}\label{lem:mariage}
Let $A = \{a_0, a_1\}$ and $B = \{b_0, b_1\}$ be any sets consisting of two elements and let $R \subseteq A \times B$ be any binary relation. 
Suppose that $R$ satisfies the following two conditions: 
\begin{enumerate}
    \item For any $a \in A$, there exists $b \in B$ such that $(a, b) \in R$. 
    \item For any $b \in B$, there exists $a \in A$ such that $(a, b) \in R$. 
\end{enumerate}
Then, there exist $c, d \in B$ such that $c \neq d$, $(a_0, c) \in R$, and $(a_1, d) \in R$. 
\end{lem}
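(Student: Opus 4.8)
The plan is to treat $R$ as a bipartite graph between $A$ and $B$ and to observe that the desired conclusion is exactly the existence of a matching saturating $A$, i.e.\ a system of distinct representatives assigning $a_0 \mapsto c$ and $a_1 \mapsto d$ with $c \neq d$. This is why the lemma is essentially the marriage theorem specialized to two-element sides. Since $A$ and $B$ each have only two elements, rather than invoking Hall's condition in general I would argue directly by contradiction.

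For $a \in A$, write $N(a) := \{b \in B \mid (a, b) \in R\}$ for its set of $R$-successors. By condition 1, both $N(a_0)$ and $N(a_1)$ are non-empty. Suppose, toward a contradiction, that no pair $c, d$ as in the conclusion exists; that is, for every $c \in N(a_0)$ and every $d \in N(a_1)$ we have $c = d$. First I would note that $N(a_0)$ must be a singleton: if $N(a_0)$ contained both $b_0$ and $b_1$, then picking any $d \in N(a_1)$ (non-empty) would force $b_0 = d$ and $b_1 = d$, whence $b_0 = b_1$, contradicting $b_0 \neq b_1$. Symmetrically $N(a_1)$ is a singleton, and then the forced equality $c = d$ of their unique elements yields $N(a_0) = N(a_1) = \{b\}$ for a single $b \in B$.

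This leaves the other element $b' \in B \setminus \{b\}$ with no $R$-predecessor in $A$, since neither $a_0$ nor $a_1$ is related to $b'$; this contradicts condition 2, completing the proof. I do not expect any genuine obstacle here: the statement is a finite check, and the only care needed is to handle the degenerate shapes of $R$ uniformly, which the contradiction argument does automatically. An equally valid but more tedious route would simply enumerate the possible configurations of $R$ by hand and exhibit the matching in each.
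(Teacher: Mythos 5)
Your proof is correct. The paper's own argument is a direct two-case check: if both diagonal pairs $(a_0, b_0)$ and $(a_1, b_1)$ lie in $R$, take $c = b_0$ and $d = b_1$; otherwise one of them is missing, and the two covering conditions then force both off-diagonal pairs $(a_0, b_1)$ and $(a_1, b_0)$ into $R$, so take $c = b_1$ and $d = b_0$. You instead argue by contradiction via the neighbourhood sets $N(a_0)$ and $N(a_1)$: the negation of the conclusion forces both to be the same singleton $\{b\}$, leaving the other element of $B$ without a predecessor and violating condition 2. Both are complete finite verifications of the same trivial fact; the paper's version is marginally shorter and constructive (it names $c$ and $d$ explicitly in each case), while yours makes the marriage-theorem reading explicit and handles the degenerate shapes of $R$ uniformly without enumerating them. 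There is no gap in your argument.
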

\begin{proof}
If both $(a_0, b_0)$ and $(a_1, b_1)$ are in $R$, then the statement holds for $c = b_0$ and $d = b_1$. 
If $(a_0, b_0) \notin R$ or $(a_1, b_1) \notin R$, then by considering the two conditions, we find that both $(a_0, b_1)$ and $(a_1, b_0)$ are in $R$, and so the statement holds for $c = b_1$ and $d = b_0$. 
\end{proof}

\begin{lem}\label{lem:match_two}
If the clusters $C_0^k = \{y_0, z_0\}$ and $C_1^l = \{y_1, z_1\}$ consist of two elements and $C_0^k$ matches $C_1^l$, then there exist $u_1, v_1 \in C_1^l$ such that $u_1 \neq v_1$ and
\[
    (M_0, y_0) \xr{0} (M_1, u_1)\ \text{and}\ (M_0, z_0) \xr{0} (M_1, v_1). 
\]
\end{lem}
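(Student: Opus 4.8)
The plan is to recognize this statement as a direct instance of the two-element marriage lemma, Lemma \ref{lem:mariage}. The hypothesis that $C_0^k$ matches $C_1^l$ is precisely a statement about a bipartite relation between the two clusters that is both left-total and right-total, which is exactly the configuration handled by Lemma \ref{lem:mariage}. So the whole work amounts to packaging the matching hypothesis into the abstract combinatorial form and then reading off the conclusion.

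Concretely, I would set $A := C_0^k = \{y_0, z_0\}$ and $B := C_1^l = \{y_1, z_1\}$, and define the relation $R \subseteq A \times B$ by declaring $(a, b) \in R$ iff $(M_0, a) \xr{0} (M_1, b)$. With this definition, the first clause of Definition \ref{match} (that every element of $C_0^k$ is $\xr{0}$-related to some element of $C_1^l$) yields condition 1 of Lemma \ref{lem:mariage}, and the second clause of Definition \ref{match} yields condition 2. Applying Lemma \ref{lem:mariage} with $a_0 := y_0$ and $a_1 := z_0$ then produces $c, d \in B$ with $c \neq d$, $(y_0, c) \in R$, and $(z_0, d) \in R$; setting $u_1 := c$ and $v_1 := d$ gives the desired distinct elements satisfying $(M_0, y_0) \xr{0} (M_1, u_1)$ and $(M_0, z_0) \xr{0} (M_1, v_1)$.

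The argument carries essentially no genuine obstacle, since the real combinatorial content has already been isolated into Lemma \ref{lem:mariage}. The only point requiring care is the bookkeeping: fixing which cluster plays the role of $A$ and which plays $B$, and making the identification $a_0 = y_0$, $a_1 = z_0$ explicit, so that the two representatives end up attached to $y_0$ and $z_0$ in the correct order rather than being inadvertently swapped.
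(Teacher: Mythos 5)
Your proof is correct and follows exactly the paper's approach: the paper's own proof is the one-line observation that the statement ``directly follows from Definition \ref{match} and Lemma \ref{lem:mariage}'', and your write-up simply makes that instantiation explicit. Nothing is missing.
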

\begin{proof}
This directly follows from Definition \ref{match} and Lemma \ref{lem:mariage}. 
\end{proof}

\begin{lem}\label{lem:match_two_cl}
Suppose both $M_0$ and $M_1$ include exactly two clusters $C_0^0, C_0^1$ and $C_1^0, C_1^1$ of final elements, respectively. 
If $(M_0, x_0) \xr{3} (M_1, x_1)$, then there exist distinct clusters $D_1^0$ and $D_1^1$ of $M_1$ such that $C_0^k$ matches $D_1^k$ for $k \in \{0, 1\}$. 
\end{lem}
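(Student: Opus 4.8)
The plan is to reduce the claim to a direct composition of the two lemmas immediately preceding it. Since both $M_0$ and $M_1$ have exactly two clusters of final elements, the desired conclusion is precisely a perfect matching in the bipartite ``matches'' relation between $\{C_0^0, C_0^1\}$ and $\{C_1^0, C_1^1\}$, and Lemma \ref{lem:mariage} is designed exactly to produce such a matching from two-sided surjectivity conditions. So the strategy is to obtain those surjectivity conditions from Lemma \ref{lem:match_3} and then feed them into Lemma \ref{lem:mariage}.

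First I would invoke Lemma \ref{lem:match_3} with the hypothesis $(M_0, x_0) \xr{3} (M_1, x_1)$. This immediately yields two facts: every cluster $C_0^k$ of $M_0$ matches some cluster of $M_1$, and every cluster $C_1^l$ of $M_1$ is matched by some cluster of $M_0$. Next I would set $A = \{C_0^0, C_0^1\}$ and $B = \{C_1^0, C_1^1\}$, which are two-element sets by the hypothesis that each model has exactly two (distinct) clusters of final elements, and let $R \subseteq A \times B$ be defined by $(C_0^k, C_1^l) \in R$ iff $C_0^k$ matches $C_1^l$ in the sense of Definition \ref{match}. The two conclusions extracted from Lemma \ref{lem:match_3} are then precisely the two hypotheses required of $R$ in Lemma \ref{lem:mariage}.

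Finally, applying Lemma \ref{lem:mariage}, I obtain distinct $c, d \in B$ with $(C_0^0, c) \in R$ and $(C_0^1, d) \in R$. Setting $D_1^0 := c$ and $D_1^1 := d$ gives distinct clusters of $M_1$ such that $C_0^k$ matches $D_1^k$ for $k \in \{0, 1\}$, which is exactly the assertion.

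I expect no genuine obstacle here, since all the substantive work has already been carried out in establishing Lemmas \ref{lem:match_3} and \ref{lem:mariage}; this statement is essentially their juxtaposition specialised to the two-cluster case. The only points meriting a line of checking are that the ``matches'' relation is correctly identified with the abstract relation $R$ and that $A$ and $B$ really consist of two distinct elements, both of which are immediate from the standing hypotheses.
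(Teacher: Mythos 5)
Your proof is correct and is exactly the paper's argument: the paper's own proof is the single line that the lemma ``directly follows from Lemmas \ref{lem:match_3} and \ref{lem:mariage},'' and your proposal just spells out that composition explicitly. Nothing further is needed.
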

\begin{proof}
This directly follows from Lemmas \ref{lem:match_3} and \ref{lem:mariage}. 
\end{proof}

\section{Modal companions of $\Cl$}\label{sec:Cl}

In this section, we prove ULIP for the logics $\mathbf{Triv}$ and $\mathbf{S5}$ as test cases for our application of Theorem \ref{thm:main}. 

\subsection{$\Triv$}

Let $\mathcal{C}_{\Triv}$ be the class of all Kripke models whose frame consists of a single reflexive world. 
The logic $\Triv$ is sound and complete with respect to $\mathcal{C}_{\Triv}$.

\begin{thm}[Maksimova \cite{Maksimova82}]
$\Triv$ has ULIP. 
\end{thm}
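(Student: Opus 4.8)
The plan is to invoke Theorem~\ref{thm:main}. Since $\Triv$ is sound and complete with respect to $\mathcal{C}_{\Triv}$, it suffices to exhibit a natural number $n$ for which $\mathcal{C}_{\Triv}$ enjoys $n$-IP; I would prove the strongest available instance, namely that $\mathcal{C}_{\Triv}$ enjoys $0$-IP.

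First I would record the structural observation that every frame underlying a model in $\mathcal{C}_{\Triv}$ is (isomorphic to) the single reflexive point, so each such model $M_i$ is determined by the truth assignment at its unique world $w_i$, and in particular $w_i$ is both the root and the only final element. Given models $M_0 = (W_0, R_0, \Vdash_0)$ and $M_1 = (W_1, R_1, \Vdash_1)$ in $\mathcal{C}_{\Triv}$ with worlds $w_0 \in W_0$ and $w_1 \in W_1$ such that $(M_0, w_0) \xr{0} (M_1, w_1)$, I would simply take $\mathcal{F}^* = (W^*, R^*)$ to be the single reflexive point, i.e.\ $W^* = \{w^*\}$ and $R^* = \{(w^*, w^*)\}$, and define the two maps by $f_0(w^*) = w_0$ and $f_1(w^*) = w_1$.

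It then remains to check the required conditions, all of which are immediate here. Both $f_0$ and $f_1$ are p-morphisms because every frame in sight is a single reflexive point: the forward condition follows from reflexivity of $R_0$ and $R_1$, and the back condition is witnessed by $w^*$ itself, since $W_0$ and $W_1$ are singletons. Condition~1 of Definition~\ref{def:IP} holds because $\mathcal{F}^*$ is the single reflexive point and hence every model based on it lies in $\mathcal{C}_{\Triv}$; Condition~2 holds by construction; and Condition~3, which for the single world $w^*$ asks that $(M_0, f_0(w^*)) \xr{0} (M_1, f_1(w^*))$, is precisely the hypothesis $(M_0, w_0) \xr{0} (M_1, w_1)$. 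By Theorem~\ref{thm:main} we conclude that $\Triv$ has ULIP.

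I expect no genuine obstacle in this case: because a $\Triv$-frame has a single world, the amalgamating frame $\mathcal{F}^*$ can be taken to be that same world and no matching of distinct clusters is needed. This is exactly what makes $\Triv$ a clean test case, in contrast with the later logics, where the difficulty will be to arrange $\mathcal{F}^*$ so that the clusters of $M_0$ and $M_1$ can be paired up compatibly while keeping $\mathcal{F}^*$ inside the intended class.
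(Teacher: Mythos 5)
Your proposal is correct and follows essentially the same route as the paper: both establish that $\mathcal{C}_{\Triv}$ enjoys $0$-IP by taking $\mathcal{F}^*$ to be a single reflexive point mapped onto $w_0$ and $w_1$, and then invoke Theorem~\ref{thm:main}. The only cosmetic difference is that you verify the p-morphism conditions of Definition~\ref{def:IP} explicitly, whereas the paper phrases the check via the labeling conventions of Remark~\ref{labeling}.
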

\begin{proof}
We prove that $\mathcal{C}_{\Triv}$ enjoys $0$-IP. 
Then, ULIP for $\Triv$ follows from Theorem \ref{thm:main}. 
Let $P^+$ and $P^-$ be any finite sets of propositional variables and let $M_0 = (\{w_0\}, R_0, \Vdash_0)$ and $M_1  = (\{w_1\}, R_1, \Vdash_1)$ be Kripke models in $\mathcal{C}_{\Triv}$, and suppose $(M_0, w_0) \xr{0} (M_1, w_1)$. 
Let $W^* : = \{(w_0, w_1)\}$ and $R^* : = (W^*)^2$. 
Then, it is easy to check that $\mathcal{F}^* = (W^*, R^*)$ satisfies the conditions stated in Remark \ref{labeling}. 
\end{proof}

\subsection{$\SFi$}

Let $\mathcal{C}_{\SFi}$ be the class of all Kripke models whose frame forms a finite cluster. 
Figure \ref{fig:S5} visualizes such a frame. 
In the following, we assume that each diagram represents the Kripke frame of the transitive and reflexive closure of the arrows shown in the diagram.
The logic $\SFi$ is sound and complete with respect to $\mathcal{C}_{\SFi}$. 

\begin{figure}[ht]
\centering
\begin{tikzpicture}
\draw (1.5, -1.5) circle (2 and 0.5);

\fill (0, -1.5) circle (2pt);
\fill (1, -1.5) circle (2pt);
\draw (2, -1.5) node{$\cdots$};
\fill (3, -1.5) circle (2pt);

\draw[<->, >=Stealth] (0.1,-1.5)--(0.9, -1.5);
\draw[<->, >=Stealth] (1.1,-1.5)--(1.6, -1.5);
\draw[<->, >=Stealth] (2.4,-1.5)--(2.9, -1.5);
\end{tikzpicture}
\caption{A frame of $\SFi$}\label{fig:S5}
\end{figure}

\begin{thm}[Fitting \cite{Fitting83}]
$\SFi$ has ULIP. 
\end{thm}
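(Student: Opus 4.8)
The plan is to apply Theorem \ref{thm:main} by showing that the class $\mathcal{C}_{\SFi}$ enjoys $n$-IP for an appropriate natural number $n$. Since $\SFi$ is sound and complete with respect to $\mathcal{C}_{\SFi}$, establishing $n$-IP suffices. Given that the frames here consist of a single cluster (so there is no genuine ``height'' distinction, only matching within one cluster of final elements), I expect the relevant bisimulation information to be captured at a low modal depth; I would try $n = 1$, since Lemma \ref{lem:match_1} tells us that a $\xrightarrow[1]{(P^+,P^-)}$-relation between two worlds forces the clusters containing them to match.

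First I would fix finite sets $P^+, P^-$, two models $M_0 = (W_0, R_0, \Vdash_0)$ and $M_1 = (W_1, R_1, \Vdash_1)$ in $\mathcal{C}_{\SFi}$ (each a single finite cluster), worlds $w_0 \in W_0$ and $w_1 \in W_1$, and assume $(M_0, w_0) \xr{1} (M_1, w_1)$. By Lemma \ref{lem:match_1}, the whole cluster $W_0$ matches the whole cluster $W_1$, meaning every $u_0 \in W_0$ has a $\xr{0}$-partner in $W_1$ and vice versa. The goal is to build the amalgamating frame $\mathcal{F}^* = (W^*, R^*)$. Using the rough labeling notation of Remark \ref{labeling}, I would take
\[
    W^* := \{(u_0, u_1) \in W_0 \times W_1 \mid (M_0, u_0) \xr{0} (M_1, u_1)\},
\]
with the basepoint $(w_0, w_1)$, and declare $R^* := (W^*)^2$ so that $\mathcal{F}^*$ is again a single cluster, guaranteeing Clause 1 (every model on $\mathcal{F}^*$ lies in $\mathcal{C}_{\SFi}$). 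Clause 6 of Remark \ref{labeling} is immediate from the definition of $W^*$, and Clause 2 holds since $(M_0,w_0)\xr{0}(M_1,w_1)$ follows from $(M_0,w_0)\xr{1}(M_1,w_1)$, so $(w_0, w_1) \in W^*$.

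The main verification, and the step I expect to be the real (though still routine) obstacle, is the back-and-forth p-morphism condition, namely Clauses 3, 4, and 5 of Remark \ref{labeling}. Clause 3 is trivial because $R^*$, $R_0$, and $R_1$ are all total relations on their respective (single) clusters. For Clause 4, given $(u_0, u_1) \in W^*$ and any $u_0' \in W_0$ with $u_0 R_0 u_0'$, I must produce $u_1' \in W_1$ with $(u_0', u_1') \in W^*$; this is exactly where the matching of $W_0$ into $W_1$ (Clause 1 of Definition \ref{match}) supplies the needed $u_1'$ with $(M_0, u_0') \xr{0} (M_1, u_1')$, and then $(u_0, u_1) R^* (u_0', u_1')$ holds since $R^*$ is total. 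Clause 5 is symmetric, using Clause 2 of Definition \ref{match}. Having checked all six conditions of Remark \ref{labeling}, I conclude that $\mathcal{C}_{\SFi}$ enjoys $1$-IP, and hence by Theorem \ref{thm:main} the logic $\SFi$ has ULIP. The only subtlety to keep an eye on is the distinctness of labels noted in Remark \ref{labeling}: distinct pairs in $W^*$ are genuinely distinct worlds, but since $R^*$ is the full relation this causes no difficulty in the p-morphism checks.
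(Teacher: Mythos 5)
Your proposal is correct and follows essentially the same route as the paper: both prove that $\mathcal{C}_{\SFi}$ enjoys $1$-IP by invoking Lemma \ref{lem:match_1} to conclude that the cluster $W_0$ matches $W_1$, then take $W^*$ to be the set of $\xr{0}$-related pairs with $R^* = (W^*)^2$ and verify the six conditions of Remark \ref{labeling} exactly as you do. No gaps.
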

\begin{proof}
We prove that $\mathcal{C}_{\SFi}$ enjoys $1$-IP. 
Let $P^+$ and $P^-$ be any finite sets of propositional variables and let $M_0 = (W_0, R_0, \Vdash_0)$ and $M_1  = (W_1, R_1, \Vdash_1)$ be Kripke models in $\mathcal{C}_{\SFi}$, let $w_0 \in W_0$ and $w_1 \in W_1$, and suppose $(M_0, w_0) \xr{1} (M_1, w_1)$. 
Then, both $W_0$ and $W_1$ are clusters of $M_0$ and $M_1$, respectively. 
By Lemma \ref{lem:match_1}, we have that $W_0$ matches $W_1$. 

Let $W^* := \{(x_0, x_1) \mid (M_0, x_0) \xr{0} (M_1, x_1)\}$ and $R^* : = (W^*)^2$. 
We check that $\mathcal{F}^* = (W^*, R^*)$ fulfills the conditions stated in Remark \ref{labeling}. 

\begin{enumerate}
    \item Since $\mathcal{F}^*$ forms a finite cluster, all Kripke models based on $\mathcal{F}^*$ are in $\mathcal{C}_{\SFi}$. 
    \item $(w_0, w_1) \in W^*$ because $(M_0, w_0) \xr{0} (M_1, w_1)$. 
    \item If $(x_0, x_1) R^* (y_0, y_1)$, then $x_0, y_0 \in W_0$ and $x_1, y_1 \in W_1$. 
    Since both $W_0$ and $W_1$ form clusters, we obtain $x_0 R_0 y_0$ and $x_1 R_1 y_1$. 
    \item Suppose $(x_0, x_1) \in W^*$ and $x_0 R_0 y_0$. 
    Since $W_0$ matches $W_1$, we find some $y_1 \in W_1$ such that $(M_0, y_0) \xr{0} (M_1, y_1)$. 
    Hence $(y_0, y_1) \in W^*$. 
    Since $R^* = (W^*)^2$, we obtain $(x_0, x_1) R^* (y_0, y_1)$. 
    
    \item Suppose $(x_0, x_1) \in W^*$ and $x_1 R_1 y_1$. 
    In the same way as above, we can find some $y_0 \in W_0$ such that $(y_0, y_1) \in W^*$.
    Then, $(x_0, x_1) R^* (y_0, y_1)$. 
    
    \item If $(x_0, x_1) \in W^*$, then $(M_0, x_0) \xr{0} (M_1, x_1)$ by the definition of $W^*$. 
\end{enumerate}

It follows from Theorem \ref{thm:main} that $\SFi$ has ULIP. 
\end{proof}

\section{Modal companions of $\LS$}\label{sec:LS}

In this section, we prove that the three logics $\mathbf{GW.2} = \Gamma(\LS, 1, 1)$, $\Gamma(\LS, 2, 1)$, and $\mathbf{S4.4} = \Gamma(\LS, \omega, 1)$ have ULIP. 
In particular, LIP for $\Gamma(\LS, 2, 1)$ is new.

\subsection{$\mathbf{GW.2}$}

Let $\mathcal{C}_{\mathbf{GW.2}}$ be the class of all Kripke models whose frames are of the form shown in Figure \ref{fig:GW.2}. 
The logic $\mathbf{GW.2}$ is sound and complete with respect to $\mathcal{C}_{\mathbf{GW.2}}$. 

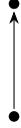
\begin{figure}[ht]
\centering
\begin{tikzpicture}
\fill (0, 0) circle (2pt);
\fill (0, 1.5) circle (2pt);

\draw[->, >=Stealth] (0,0.1)--(0, 1.4);
\end{tikzpicture}
\caption{The frame of $\mathbf{GW.2}$}\label{fig:GW.2}
\end{figure}

\begin{thm}[Maksimova \cite{Maksimova82}]\label{thm:GW.2}
$\mathbf{GW.2}$ has ULIP. 
\end{thm}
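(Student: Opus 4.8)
The plan is to invoke Theorem~\ref{thm:main} by exhibiting a natural number $n$ for which the class $\mathcal{C}_{\mathbf{GW.2}}$ enjoys $n$-IP. Since the frames in $\mathcal{C}_{\mathbf{GW.2}}$ consist of a single reflexive root together with a single final reflexive point above it (a two-element chain, after taking the reflexive-transitive closure), the relevant clusters are all singletons and the depth of the frame is small. I expect $n = 2$ or $n = 3$ to suffice, because the lemmas of Section~\ref{sec:lemmas} produce cluster-matchings from the hypothesis $(M_0, w_0) \xrightarrow[n]{(P^+, P^-)} (M_1, w_1)$ precisely at those levels. Concretely, given two models $M_0, M_1 \in \mathcal{C}_{\mathbf{GW.2}}$ with roots $x_0, x_1$ and final points $t_0, t_1$, and a source point $w_0 \in W_0$ with $(M_0, w_0) \xrightarrow[n]{(P^+, P^-)} (M_1, w_1)$, I would use Lemma~\ref{lem:match_1} (and, when $w_0 = x_0$ is the root, Lemma~\ref{lem:match_2} or Lemma~\ref{lem:match_3}) to conclude that the final cluster of $M_0$ matches the final cluster of $M_1$ and, in the root case, that the roots are suitably related by the $0$-relation as well.

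The construction of $\mathcal{F}^*$ will follow the labeling convention of Remark~\ref{labeling}: I would take $W^*$ to be a set of pairs $(a_0, a_1) \in W_0 \times W_1$ with $(M_0, a_0) \xrightarrow[0]{(P^+, P^-)} (M_1, a_1)$, starting from the root pair $(w_0, w_1)$ and closing off upward so that Clauses~4 and~5 of Remark~\ref{labeling} are met. Because each model has at most one point strictly above its root, the frame $\mathcal{F}^*$ itself will again be (a bounded-size variant of) a model in $\mathcal{C}_{\mathbf{GW.2}}$: I must check that $R^*$, restricted to the chosen pairs, again forms the correct shape, i.e.\ a reflexive root with final points above it. The matching lemmas guarantee that whenever $x_0 R_0 t_0$ I can find a companion final point $t_1$ in $M_1$ with the $0$-arrow relation holding (Clause~4), and symmetrically for Clause~5; this is exactly what the definition of ``matches'' in Definition~\ref{match} supplies.

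The main obstacle will be verifying Clause~1 of Remark~\ref{labeling}, namely that every Kripke model based on $\mathcal{F}^*$ lies in $\mathcal{C}_{\mathbf{GW.2}}$. Unlike the $\SFi$ case, where $\mathcal{F}^*$ is literally a single cluster, here the defining condition of $\mathbf{GW.2}$ is more restrictive: the frame must have a \emph{unique} final point (the ``.2'' directedness), so I cannot simply throw in every matching pair, since that could create several incomparable final points sitting above the root. To handle this I would select, rather than take the full product, a single appropriate final pair $(t_0, t_1)$ witnessing the match, placing only $(w_0, w_1)$ and this one final pair into $W^*$, and then argue that Clauses~4 and~5 are still satisfied because the final clusters are singletons and the match is total on both sides. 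Once $\mathcal{F}^*$ is confirmed to have the $\mathbf{GW.2}$ shape and all six clauses of Remark~\ref{labeling} are verified, $\mathbf{GW.2}$ having ULIP follows immediately from Theorem~\ref{thm:main}.
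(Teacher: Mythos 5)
Your overall strategy is the paper's: show that $\mathcal{C}_{\mathbf{GW.2}}$ enjoys $n$-IP for a small $n$, produce the two-element chain $(w_0,w_1) \to (t_0,t_1)$ as $\mathcal{F}^*$, and invoke Theorem~\ref{thm:main}. Your resolution of the ``unique final point'' worry is also the right one, and is in fact simpler than you fear: $\mathcal{F}^*$ is always literally the two-element chain of Figure~\ref{fig:GW.2}, with the proviso (noted in the paper) that in the labeling notation of Remark~\ref{labeling} the two nodes may carry the same label, e.g.\ when $w_0$ and $w_1$ are already final. The one real gap is in your case analysis via the matching lemmas. Lemma~\ref{lem:match_1} covers the case where $w_0$ and $w_1$ are both final, Lemma~\ref{lem:match_2} covers $w_0$ final and $w_1$ a root (not, as you write, the case ``$w_0 = x_0$ is the root''), and Lemma~\ref{lem:match_3} covers both roots; none of them covers the configuration where $w_0$ is the root of $M_0$ and $w_1$ is the final element of $M_1$, which the definition of $n$-IP also requires you to handle. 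Moreover, going through Lemma~\ref{lem:match_3} forces $n = 3$. The paper sidesteps the case split entirely with a single directedness argument at $n = 2$: since every world of $M_1$ sees the final element $y_1$, one has $(M_1, w_1) \Vdash_1 \Box\Diamond\Cjc{0}[M_1, y_1]$ for \emph{any} $w_1$; this depth-$2$ formula transfers back to $(M_0, w_0)$, pushes up to the final element $y_0$ of $M_0$, and collapses to $(M_0, y_0) \Vdash_0 \Cjc{0}[M_1, y_1]$ because $y_0$ sees only itself, giving $(M_0, y_0) \xr{0} (M_1, y_1)$ uniformly. Your argument can be repaired by adding an analogous direct computation for the missing configuration (or by adopting the uniform $\Box\Diamond$ argument throughout), but as written the case analysis is incomplete.
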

\begin{proof}
We prove that $\mathcal{C}_{\mathbf{GW.2}}$ enjoys $2$-IP. 
Let $P^+$ and $P^-$ be any finite sets of propositional variables and let $M_0 = (W_0, R_0, \Vdash_0)$ and $M_1  = (W_1, R_1, \Vdash_1)$ be Kripke models in $\mathcal{C}_{\mathbf{GW.2}}$, let $w_0 \in W_0$ and $w_1 \in W_1$, and suppose $(M_0, w_0) \xr{2} (M_1, w_1)$. 
For $i \in \{0,1\}$, let $y_i$ be the final element of $M_i$. 

We show that $(M_0, y_0) \xr{0} (M_1, y_1)$. 
Since $v_1 R_1 y_1$ for all $v_1 \in W_1$, we have $(M_1, w_1) \Vdash_1 \Box \Diamond \Cjc{0}[M_1, y_1]$. 
Since $(M_0, w_0) \xr{2} (M_1, w_1)$, we get $(M_0, w_0) \Vdash_0 \Box \Diamond \Cjc{0}[M_1, y_1]$. 
Since $w_0 R_0 y_0$, we have $(M_0, y_0) \Vdash_0 \Diamond \Cjc{0}[M_1, y_1]$, and hence $(M_0, y_0) \Vdash_0 \Cjc{0}[M_1, y_1]$. 
By Lemma \ref{lem:equiv2}, $(M_0, y_0) \xr{0} (M_1, y_1)$. 

It is easy to check that the frame $\mathcal{F}^* = (W^*, R^*)$ drawn in Figure \ref{fig:GW.2_cases} satisfies all the required conditions stated in Remark \ref{labeling}. 
Note that in our rough labeling notation, the situation $(w_0, w_1) = (y_0, y_1)$ is allowed. 

\begin{figure}[ht]
\centering
\begin{tikzpicture}
\fill (0, 0) circle (2pt);
\fill (0, 1.5) circle (2pt);
\draw (0, 1.5) node[left]{$(y_0, y_1)$};
\draw (0, 0) node[left]{$(w_0, w_1)$};

\draw[->, >=Stealth] (0,0.1)--(0, 1.4);

\end{tikzpicture}
\caption{The frame $(W^*, R^*)$}\label{fig:GW.2_cases}
\end{figure}
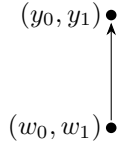
\end{proof}

\subsection{$\Gamma(\LS, 2, 1)$}

Let $\mathcal{C}_{\Gamma(\LS, 2, 1)}$ be the class of all Kripke models whose frames are of the form shown in Figure \ref{fig:LC_2,2,1}. 
The logic $\Gamma(\LS, 2, 1)$ is sound and complete with respect to $\mathcal{C}_{\Gamma(\LS, 2, 1)}$. 

\begin{figure}[ht]
\centering
\begin{tikzpicture}
\fill (0.7, 1.5) circle (2pt);
\fill (-0.7, 1.5) circle (2pt);
\fill (0, 0) circle (2pt);

\draw[->, >=Stealth] (0,0.1)--(-0.7, 1.4);
\draw[->, >=Stealth] (0,0.1)--(0.7, 1.4);
\draw[<->, >=Stealth] (-0.6,1.5)--(0.6, 1.5);
\end{tikzpicture}
\caption{The frame of $\Gamma(\LS, 2, 1)$}\label{fig:LC_2,2,1}
\end{figure}
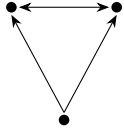

\begin{thm}\label{thm:LS,2,1}
$\Gamma(\LS, 2, 1)$ has ULIP. 
\end{thm}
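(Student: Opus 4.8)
The plan is to apply Theorem~\ref{thm:main} by showing that the class $\mathcal{C}_{\Gamma(\LS, 2, 1)}$ enjoys $n$-IP for a suitable $n$; the examples in Section~\ref{sec:Cl} suggest that $n = 3$ is the right choice here, since the frames now contain genuinely distinct clusters of final elements (the two-element top cluster) and we will need Lemma~\ref{lem:match_3} to match clusters across the two models. So I would fix finite sets $P^+, P^-$, take models $M_0, M_1 \in \mathcal{C}_{\Gamma(\LS, 2, 1)}$ with roots $w_0, w_1$, and assume $(M_0, w_0) \xrightarrow[3]{(P^+, P^-)} (M_1, w_1)$. The goal is to build a frame $\mathcal{F}^* = (W^*, R^*)$ together with a root and p-morphisms onto $M_0$ and $M_1$ satisfying the three conditions of Definition~\ref{def:IP}, which by Remark~\ref{labeling} reduces to exhibiting a set $W^*$ of labels $(x_0, x_1)$ with the root label $(w_0, w_1)$, the back-and-forth zig-zag conditions on $R^*$, and the $0$-IP condition $(M_0, x_0) \xrightarrow[0]{(P^+, P^-)} (M_1, x_1)$ at every label.

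First I would handle the root. As in the proof of Theorem~\ref{thm:GW.2}, one shows $(M_0, w_0) \xrightarrow[0]{(P^+, P^-)} (M_1, w_1)$ directly, so that $(w_0, w_1)$ can serve as the root label; the argument is the trivial case of the definition since the $n{=}3$ arrow subsumes the $n{=}0$ arrow. Next comes the top cluster. Here I expect the frame of $\Gamma(\LS, 2, 1)$ to have a single two-element cluster $C_i = \{y_i, z_i\}$ of final elements above each root, so I would invoke Lemma~\ref{lem:match_two_cl}: from $(M_0, w_0) \xrightarrow[3]{(P^+, P^-)} (M_1, w_1)$ and the fact that each model has exactly the two relevant clusters, I obtain that the top cluster of $M_0$ matches the top cluster of $M_1$. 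Then Lemma~\ref{lem:match_two} supplies an injective correspondence: distinct $u_1, v_1$ in the top cluster of $M_1$ with $(M_0, y_0) \xrightarrow[0]{(P^+, P^-)} (M_1, u_1)$ and $(M_0, z_0) \xrightarrow[0]{(P^+, P^-)} (M_1, v_1)$. These two pairs become the top-cluster labels of $W^*$.

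I would then assemble $W^*$ from the root label together with the two top labels obtained above, and define $R^*$ so that the frame has the shape of Figure~\ref{fig:LC_2,2,1}: the root below a two-element reflexive cluster, with each top label reflexive, symmetric to the other, and $R^*$-above the root. With the labels chosen as above, the $0$-IP condition holds at every point of $W^*$ by construction, condition~1 of Definition~\ref{def:IP} holds because $\mathcal{F}^*$ again has the defining frame shape so every model over it lies in $\mathcal{C}_{\Gamma(\LS, 2, 1)}$, and the zig-zag conditions 3--5 of Remark~\ref{labeling} are routine: the root label reaches both top labels, each top label reaches itself and the other, and the matching guarantees that any $R_0$- or $R_1$-successor of a component is realized by some label. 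The main obstacle, and the only place requiring genuine care, is verifying the back-and-forth conditions for the \emph{root}: when $w_0 R_0 y_0$ (a successor in $M_0$) I must produce a label of the form $(y_0, \cdot)$ in $W^*$, and symmetrically for $M_1$; this is exactly what the injective matching from Lemmas~\ref{lem:match_two} and~\ref{lem:match_two_cl} delivers, since it pairs up the two top elements of $M_0$ with the two distinct top elements of $M_1$, so every final element on either side appears as a component of some chosen label. One subtlety to keep in mind is the degenerate situation where a root coincides with a final element or where the models are smaller than the maximal frame, but as Theorem~\ref{thm:GW.2} notes, the rough labeling notation tolerates such coincidences, so these cases cause no real difficulty.
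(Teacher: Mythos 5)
Your overall construction is the paper's: label the root $(w_0, w_1)$, use Lemma~\ref{lem:match_two} to pair the two final elements of $M_0$ injectively with two distinct final elements of $M_1$, and take $W^*$ to be the root label plus the resulting two-element top cluster. However, there are two concrete problems with how you justify the matching. First, Lemma~\ref{lem:match_two_cl} does not apply here: its hypothesis is that $M_0$ and $M_1$ each contain exactly \emph{two clusters of final elements}, whereas a $\Gamma(\LS, 2, 1)$ model has exactly \emph{one} cluster of final elements (containing two points) above the root. That lemma is the tool for the $\LV$-companions, not for this frame shape. The statement you actually need --- that the unique final cluster $C_0$ of $M_0$ matches the unique final cluster $C_1$ of $M_1$ --- has to come from elsewhere: from Lemma~\ref{lem:match_3} if $w_0, w_1$ are both roots, from Lemma~\ref{lem:match_2} or Lemma~\ref{lem:match_1} in the other configurations, or (as the paper does) from a direct argument.

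Second, you fix $w_0, w_1$ to be the roots, but Definition~\ref{def:IP} quantifies over arbitrary $w_0 \in W_0$ and $w_1 \in W_1$; the caveat you make about ``coincidences'' in the labelled frame does not cover the case analysis on where $w_0$ and $w_1$ actually sit in $M_0$ and $M_1$, which your route through the depth-$3$ cluster lemmas would require. The paper sidesteps both issues at once: since every world of a $\Gamma(\LS,2,1)$ model is $R$-below every final element, $(M_1, w_1) \Vdash_1 \Box\Diamond\Cjc{0}[M_1, u_1]$ holds for each final $u_1$, and transferring this depth-$2$ formula through $(M_0, w_0) \xr{2} (M_1, w_1)$ already yields that each final element of $M_1$ is matched by some final element of $M_0$ (and symmetrically), after which Lemma~\ref{lem:mariage} / Lemma~\ref{lem:match_two} gives the injective pairing. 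This works uniformly for all positions of $w_0, w_1$ and shows that $2$-IP, not $3$-IP, suffices. Your choice of $n=3$ is not itself an error --- any $n$ witnesses ULIP --- but the argument as written does not establish the matching it relies on.
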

\begin{proof}
We prove that $\mathcal{C}_{\Gamma(\LS, 2, 1)}$ enjoys $2$-IP. 
Let $P^+$ and $P^-$ be any finite sets of propositional variables and let $M_0 = (W_0, R_0, \Vdash_0)$ and $M_1  = (W_1, R_1, \Vdash_1)$ be Kripke models in $\mathcal{C}_{\Gamma(\LS, 2, 1)}$, let $w_0 \in W_0$ and $w_1 \in W_1$, and suppose $(M_0, w_0) \xr{2} (M_1, w_1)$. 
Let $C_i = \{y_i, z_i\}$ be the cluster of final elements of $M_i$ for $i \in \{0, 1\}$. 

We show that $C_0$ matches $C_1$. 
Let $u_1 \in C_1$. 
    Since $v_1 R_1 u_1$ for all $v_1 \in W_1$ and $w_0 R_0 y_0$, in the same way as in the proof of Theorem \ref{thm:GW.2}, we have $(M_0, y_0) \Vdash_0 \Diamond \Cjc{0}[M_1, u_1]$. 
    Then, $\Cjc{0}[M_1, u_1]$ is true in at least one of $y_0$ and $z_0$.  
    Hence, 
\begin{equation}\label{LC_2,2,1_case1}
    (M_0, y_0) \xr{0} (M_1, u_1)\ \text{or}\ (M_0, z_0) \xr{0} (M_1, u_1). 
\end{equation}
In the similar way, we can prove that for $u_0 \in \{y_0, z_0\}$, 
\begin{equation}\label{LC_2,2,1_case2}
    (M_0, u_0) \xr{0} (M_1, y_1)\ \text{or}\ (M_0, u_0) \xr{0} (M_1, z_1). 
\end{equation}
Since both $C_0$ and $C_1$ consist of two elements, by Lemma \ref{lem:match_two}, there exist $u_1, v_1 \in C_1$ such that $(M_0, y_0) \xr{0} (M_1, u_1)$ and $(M_0, z_0) \xr{0} (M_1, v_1)$

\begin{figure}[ht]
\centering
\begin{tikzpicture}
\fill (0.7, 1.5) circle (2pt);
\fill (-0.7, 1.5) circle (2pt);
\fill (0, 0) circle (2pt);
\draw (-0.7, 1.5) node[above]{$(y_0, u_1)$};
\draw (0.7, 1.5) node[above]{$(z_0, v_1)$};
\draw (0, 0) node[left]{$(w_0, w_1)$};

\draw[->, >=Stealth] (0,0.1)--(-0.7, 1.4);
\draw[->, >=Stealth] (0,0.1)--(0.7, 1.4);
\draw[<->, >=Stealth] (-0.6,1.5)--(0.6, 1.5);

\end{tikzpicture}
\caption{The frame $(W^*, R^*)$}\label{fig:LC_2,2,1_cases}
\end{figure}
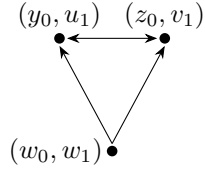

It is easy to check that the frame $(W^*, R^*)$ in Figure \ref{fig:LC_2,2,1_cases} satisfies all the required conditions.
\end{proof}

\subsection{$\mathbf{S4.4}$}

Let $\mathcal{C}_{\mathbf{S4.4}}$ be the class of all finite Kripke models whose frames are of the form shown in Figure \ref{fig:S4.4}. 
The logic $\mathbf{S4.4}$ is sound and complete with respect to $\mathcal{C}_{\mathbf{S4.4}}$.

\begin{figure}[ht]
\centering
\begin{tikzpicture}
\draw (1.5, 1.5) circle (2 and 0.5);

\fill (0, 1.5) circle (2pt);
\fill (1, 1.5) circle (2pt);
\draw (2, 1.5) node{$\cdots$};
\fill (3, 1.5) circle (2pt);

\draw[<->, >=Stealth] (0.1,1.5)--(0.9, 1.5);
\draw[<->, >=Stealth] (1.1,1.5)--(1.6, 1.5);
\draw[<->, >=Stealth] (2.4,1.5)--(2.9, 1.5);

\fill (1.5, 0) circle (2pt);

\draw[->, >=Stealth] (1.5,0.1)--(0, 1.4);
\draw[->, >=Stealth] (1.5,0.1)--(1, 1.4);
\draw[->, >=Stealth] (1.5,0.1)--(3, 1.4);
\end{tikzpicture}
\caption{A frame of $\mathbf{S4.4}$}\label{fig:S4.4}
\end{figure}
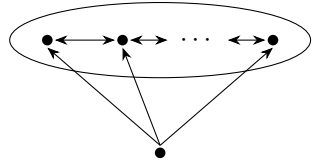

\begin{thm}[Shimura \cite{Shimura92}]\label{thm:S4.4}
$\mathbf{S4.4}$ has ULIP. 
\end{thm}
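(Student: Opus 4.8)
The plan is to show that the class $\mathcal{C}_{\mathbf{S4.4}}$ enjoys $n$-IP for some natural number $n$, and then invoke Theorem \ref{thm:main}. Since the frames here consist of a root below a single cluster of final elements (a special case of the general picture in Figure \ref{fig:models} with exactly one top cluster $C_i^0$), the matching lemmas of Section \ref{sec:lemmas} apply directly. I would guess that $n = 2$ suffices, by analogy with Theorems \ref{thm:GW.2} and \ref{thm:LS,2,1}, both of which handle a root-plus-one-cluster shape at depth $2$.

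First I would fix finite sets $P^+, P^-$, take models $M_0, M_1 \in \mathcal{C}_{\mathbf{S4.4}}$ with roots $x_0, x_1$ and single final clusters $C_0, C_1$, take $w_0 \in W_0$, $w_1 \in W_1$, and assume $(M_0, w_0) \xr{2} (M_1, w_1)$. The key preliminary step is to establish that $C_0$ matches $C_1$ in the sense of Definition \ref{match}. When $w_0 = x_0$ this should follow from Lemma \ref{lem:match_2} (which gives matching against \emph{all} clusters of $M_1$ from a $2$-arrow out of the root, and here there is only one). The case where $w_0$ already lies in $C_0$ is easier: then $w_1 \in C_1$ as well and Lemma \ref{lem:match_1} applies after noting a $2$-arrow is in particular a $1$-arrow. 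So in either configuration I obtain that the top clusters match.

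Next I would construct $\mathcal{F}^* = (W^*, R^*)$ using the rough labeling notation of Remark \ref{labeling}. The natural candidate is a root labeled $(w_0, w_1)$ together with a final cluster consisting of all pairs $(u_0, u_1) \in C_0 \times C_1$ with $(M_0, u_0) \xr{0} (M_1, u_1)$, mutually $R^*$-related and each $R^*$-accessible from the root; if $w_0, w_1$ are themselves final, the root simply collapses into this cluster, which the labeling notation permits (as already exploited in Theorem \ref{thm:GW.2}). I would then verify the six clauses of Remark \ref{labeling}: clause 1 holds because the resulting frame is again of the $\mathbf{S4.4}$ shape (root below one cluster, or a bare cluster); clauses 3--5 are the p-morphism back-and-forth conditions, where the forward direction is immediate and the two back conditions are exactly what the matching of $C_0$ with $C_1$ delivers; and clauses 2 and 6 are built into the definitions of $w^*$ and $W^*$.

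The main obstacle I anticipate is clause 5, the back condition from the $M_1$ side: given a pair in $W^*$ and an $R_1$-step $x_1 R_1 y_1$ in $M_1$, I must produce a matching partner $y_0$ on the $M_0$ side so that $(y_0, y_1) \in W^*$. Since $M_1$'s cluster can have many points, I need that \emph{every} element of $C_1$ is hit by some element of $C_0$ under $\xr{0}$, i.e.\ the second clause of Definition \ref{match}, which is precisely the content of the matching established above; unlike the two-element case of Theorem \ref{thm:LS,2,1}, no injective pairing (Lemma \ref{lem:mariage}) is needed, only surjectivity in both directions, so a finite cluster of arbitrary size causes no real difficulty. Once all six clauses are checked, ULIP for $\mathbf{S4.4}$ follows from Theorem \ref{thm:main}.
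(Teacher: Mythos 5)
Your overall strategy --- proving that $\mathcal{C}_{\mathbf{S4.4}}$ enjoys $2$-IP, reducing everything to the single statement ``$C_0$ matches $C_1$'', and then taking $W^*$ to be a root labelled $(w_0,w_1)$ below the cluster $C^* = \{(u_0,u_1) \in C_0 \times C_1 \mid (M_0,u_0) \xr{0} (M_1,u_1)\}$ --- is exactly the paper's, including the observation that only surjectivity in both directions (and no injective pairing via Lemma \ref{lem:mariage}) is needed. The gap is in how you establish the matching. Lemma \ref{lem:match_2} has the hypothesis $w_0 \in C_0^k$ and $(M_0,w_0) \xr{2} (M_1,x_1)$: the source is a \emph{final} element of $M_0$ and the target is the \emph{root} of $M_1$. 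It says nothing about the configuration $w_0 = x_0$, so your appeal to it ``when $w_0 = x_0$'' reads the lemma backwards. In particular, the case $w_0 = x_0$ and $w_1 = x_1$ is covered by none of the matching lemmas at depth $2$ (Lemma \ref{lem:match_3}, the one designed for the root--root configuration, assumes $\xr{3}$). Your other case is also off: $w_0 \in C_0$ does not force $w_1 \in C_1$, since no $(P^+,P^-)$-formula need distinguish the root of $M_1$ from its final cluster; that subcase happens to be harmless only because $w_0 \in C_0$, $w_1 = x_1$ is precisely the configuration Lemma \ref{lem:match_2} \emph{does} cover.

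The missing ingredient is a direct depth-$2$ argument that works uniformly in $w_0, w_1$ and exploits the fact that each model has only one final cluster. The paper transfers the formula $\Diamond\Box\bigvee_{u_0 \in C_0}\Cj{0}[M_0,u_0]$, which is true at every point of $M_0$ and has modal degree $2$; once it holds at $(M_1,w_1)$, every element of $C_1$ satisfies the disjunction, which is one half of the matching, and the other half is symmetric. (A $\Box\Diamond$-style argument as in the proof of Theorem \ref{thm:GW.2} would also work here.) With that repair, your frame construction and your verification of the clauses of Remark \ref{labeling} go through as you describe.
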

\begin{proof}
We prove that $\mathcal{C}_{\mathbf{S4.4}}$ enjoys $2$-IP. 
Let $P^+$ and $P^-$ be any finite sets of propositional variables and let $M_0 = (W_0, R_0, \Vdash_0)$ and $M_1  = (W_1, R_1, \Vdash_1)$ be Kripke models in $\mathcal{C}_{\mathbf{S4.4}}$, let $w_0 \in W_0$ and $w_1 \in W_1$, and suppose $(M_0, w_0) \xr{2} (M_1, w_1)$. 
Let $C_i$ be the finite cluster of final elements of $M_i$ for $i \in \{0, 1\}$. 

We show that $C_0$ matches $C_1$. 
For each $v_0 \in C_0$, we have 
\[
    (M_0, v_0) \Vdash_0 \bigvee_{u_0 \in C_0} \Cj{0}[M_0, u_0], 
\]
and hence 
\[
    (M_0, v_0) \Vdash_0 \Box \bigvee_{u_0 \in C_0} \Cj{0}[M_0, u_0].
\]
So, 
\[
    (M_0, w_0) \Vdash_0 \Diamond \Box \bigvee_{u_0 \in C_0} \Cj{0}[M_0, u_0].
\]
Since $(M_0, w_0) \xr{2} (M_1, w_1)$, we obtain that this formula is also true in $(M_1, w_1)$. 
It follows that for every $u_1 \in C_1$, we get
\[
    (M_1, u_1) \Vdash_1 \bigvee_{u_0 \in C_0} \Cj{0}[M_0, u_0], 
\]
and thus, there exists $u_0 \in C_0$ such that $(M_0, u_0) \xr{0} (M_1, u_1)$. 

In the same way, we can show that for any $u_0 \in C_0$, there exists $u_1 \in C_1$ such that $(M_0, u_0) \xr{0} (M_1, u_1)$. 

Let 
\[
    C^* : = \{(u_0, u_1) \mid u_0 \in C_0, u_1 \in C_1,\ \text{and}\ (M_0, u_0) \xr{0} (M_1, u_1)\}.
\]
Then, the frame $(W^*, R^*)$ shown in Figure \ref{fig:S4.4_cases} satisfies all the required conditions.

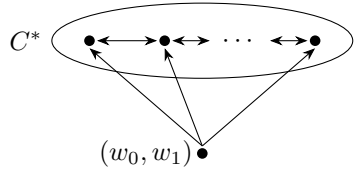
\begin{figure}[ht]
\centering
\begin{tikzpicture}
\draw (1.5, 1.5) circle (2 and 0.5);

\fill (0, 1.5) circle (2pt);
\fill (1, 1.5) circle (2pt);
\draw (2, 1.5) node{$\cdots$};
\fill (3, 1.5) circle (2pt);
\draw (-0.5, 1.5) node[left]{$C^*$};

\draw[<->, >=Stealth] (0.1,1.5)--(0.9, 1.5);
\draw[<->, >=Stealth] (1.1,1.5)--(1.6, 1.5);
\draw[<->, >=Stealth] (2.4,1.5)--(2.9, 1.5);

\fill (1.5, 0) circle (2pt);
\draw (1.5, 0) node[left]{$(w_0, w_1)$};

\draw[->, >=Stealth] (1.5,0.1)--(0, 1.4);
\draw[->, >=Stealth] (1.5,0.1)--(1, 1.4);
\draw[->, >=Stealth] (1.5,0.1)--(3, 1.4);
\end{tikzpicture}
\caption{The frame $(W^*, R^*)$}\label{fig:S4.4_cases}
\end{figure}
\end{proof}

\section{Modal companions of $\LV$}\label{sec:LV}

In this section, we newly prove that the three logics $\GV = \Gamma(\LV, 1, 1)$, $\Gamma(\LV, 2, 1)$, and $\Gamma(\LV, \omega, 1)$ have ULIP. 
In particular, LIP for $\GV$ is mentioned in \cite{Maksimova14} as an open problem. 

\subsection{$\mathbf{GV}$}

Let $\mathcal{C}_{\GV}$ be the class of all Kripke models whose frames are of the form shown in Figure \ref{fig:GV}. 
The logic $\GV$ is sound and complete with respect to $\mathcal{C}_{\GV}$. 

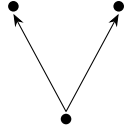
\begin{figure}[ht]
\centering
\begin{tikzpicture}
\fill (0.7, 1.5) circle (2pt);
\fill (-0.7, 1.5) circle (2pt);
\fill (0, 0) circle (2pt);

\draw[->, >=Stealth] (0,0.1)--(-0.7, 1.4);
\draw[->, >=Stealth] (0,0.1)--(0.7, 1.4);
\end{tikzpicture}
\caption{The frame of $\GV$}\label{fig:GV}
\end{figure}

\begin{thm}\label{thm:GV}
$\GV$ has ULIP. 
\end{thm}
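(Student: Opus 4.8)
The plan is to show that $\mathcal{C}_{\GV}$ enjoys $n$-IP for an appropriate $n$, and then invoke Theorem \ref{thm:main}. Looking at the frame of $\GV$ in Figure \ref{fig:GV}, it is a root $x_i$ with two incomparable final clusters, each a single point; this differs from $\Gamma(\LS, 2, 1)$ only in that the two final points are \emph{not} related to each other. Since the frame has $\Box$-depth essentially $2$, I expect $2$-IP to be the right target, exactly as in Theorems \ref{thm:GW.2}, \ref{thm:LS,2,1}, and \ref{thm:S4.4}. So I would fix finite sets $P^+, P^-$, take models $M_0, M_1 \in \mathcal{C}_{\GV}$ with $(M_0, w_0) \xr{2} (M_1, w_1)$, and write $y_i, z_i$ for the two final points of $M_i$ (with $x_i$ the root).

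The heart of the argument is a matching lemma between the \emph{sets} of final points. First I would handle the case $w_1 = x_1$ (the interesting case; if $w_1$ is already final the situation degenerates and $W^*$ can be taken to be a single point, or a two-element chain, carrying the label $(w_0, w_1)$). Assuming $w_0 = x_0$ as well, I would argue as in the proof of Theorem \ref{thm:LS,2,1}: for each final $u_1 \in \{y_1, z_1\}$ we have $(M_1, x_1) \Vdash_1 \Box \Diamond \Cjc{0}[M_1, u_1]$, and pushing this through the arrow $(M_0, w_0) \xr{2} (M_1, w_1)$ and down to each final point of $M_0$ yields that some final point of $M_0$ realizes $\Cjc{0}[M_1, u_1]$; symmetrically, each final point of $M_0$ is $\xr{0}$-below some final point of $M_1$. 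This gives the two surjectivity conditions of Definition \ref{match}, i.e. that the cluster-set $\{y_0, z_0\}$ matches $\{y_1, z_1\}$, and then Lemma \ref{lem:match_two} supplies distinct $u_1, v_1$ with $(M_0, y_0) \xr{0} (M_1, u_1)$ and $(M_0, z_0) \xr{0} (M_1, v_1)$.

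With the matching in hand I would construct $\mathcal{F}^* = (W^*, R^*)$ using the rough labeling notation of Remark \ref{labeling}: put $W^* = \{(w_0, w_1),\ (y_0, u_1),\ (z_0, v_1)\}$ with the root labeled $(w_0, w_1)$ below two \emph{incomparable} final points, exactly the shape of Figure \ref{fig:GV}. One then verifies the six clauses of Remark \ref{labeling}; the back-and-forth p-morphism clauses (4) and (5) follow precisely because the matching is a perfect matching of two-element sets, the root-label clause (2) and the $\xr{0}$-clause (6) are immediate from the choice of labels, and clause (1) holds since the frame has the $\GV$ shape. Applying Theorem \ref{thm:main} then gives ULIP.

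The main obstacle, and the one genuine difference from the $\Gamma(\LS, 2, 1)$ proof, is that here the two final points are incomparable rather than forming a single two-element cluster. This matters in two ways: the derivation of the matching can no longer use the formula $\Diamond \Cjc{0}$ \emph{within} a common final cluster (there is none), so I must be careful to route the $\Box\Diamond$-argument through the root for each final point separately; and the p-morphism condition must respect incomparability, so I must confirm that in $\mathcal{F}^*$ no arrow is forced between $(y_0, u_1)$ and $(z_0, v_1)$ — which is automatic since neither $y_0 R_0 z_0$ nor $u_1 R_1 v_1$ holds. I would also double-check the degenerate sub-cases where $M_0$ or $M_1$ has only one final point (the frame in Figure \ref{fig:GV} allows the two final worlds to coincide under the rough labeling, just as the situation $(w_0, w_1) = (y_0, y_1)$ was allowed in Theorem \ref{thm:GW.2}), since the matching argument must still produce a well-defined $\mathcal{F}^*$ of admissible shape in those cases.
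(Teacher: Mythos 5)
There is a genuine gap, and it is exactly at the point you flag as "the main obstacle" without resolving it: the matching between the final points of $M_0$ and $M_1$ in the case $w_0 = x_0$, $w_1 = x_1$ cannot be extracted from $\xr{2}$. Your proposed derivation asserts $(M_1, x_1) \Vdash_1 \Box \Diamond \Cjc{0}[M_1, u_1]$ for each final $u_1$, but this is false in $\GV$: the two final points are incomparable, so if $u_1 = y_1$ then $z_1$ does not see $y_1$ and $\Diamond \Cjc{0}[M_1, y_1]$ generally fails at $z_1$. The $\Box\Diamond$ trick works for $\Gamma(\LS,2,1)$ precisely because there the two final points form one cluster. "Routing through the root" does not repair this at depth $2$: the only depth-$\leq 2$ information available is $\Diamond \Cjc{1}[M_1,u_1]$ transferred to $(M_0,x_0)$, whose witness may be the root $x_0$ itself rather than a final point, so you cannot conclude that some \emph{final} point of $M_0$ is $\xr{0}$-related to $u_1$. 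The paper's proof resolves this by proving $3$-IP, not $2$-IP: it uses the depth-$3$ formulas $\Diamond \Box \bigvee_{u_0 \in C_0^k} \Cj{1}[M_0, u_0]$ (Lemma \ref{lem:match_3}), whose $\Box$ forces the witness into a final cluster, and then applies the marriage argument at the level of clusters (Lemma \ref{lem:match_two_cl}) to get \emph{distinct} target clusters $D_1^0 \neq D_1^1$.

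A second, related error: you invoke Lemma \ref{lem:match_two} for "the cluster-set $\{y_0,z_0\}$", but in $\GV$ the final points $y_0$ and $z_0$ are two distinct singleton clusters, not a single two-element cluster, so Definition \ref{match} and Lemma \ref{lem:match_two} do not apply to the pair $\{y_0,z_0\}$ versus $\{y_1,z_1\}$; the correct tool is Lemma \ref{lem:match_two_cl}, which again presupposes $\xr{3}$. Finally, your treatment of the remaining cases is too quick: when $w_0 \in \{y_0,z_0\}$ and $w_1 = x_1$ the frame $\mathcal{F}^*$ cannot be a single point or a chain, since clause 5 of Remark \ref{labeling} forces both $y_1$ and $z_1$ to occur as second coordinates of final points of $W^*$; here one needs $(M_0,w_0) \xr{0} (M_1,u_1)$ for \emph{both} $u_1 \in \{y_1,z_1\}$, which the paper obtains from Lemma \ref{lem:match_2} (this part does only need depth $2$). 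The overall architecture of your proposal — establish a matching, build $\mathcal{F}^*$ in the labeled-pair notation, apply Theorem \ref{thm:main} — is the right one, but the depth parameter must be $3$ and the matching must be derived via Lemmas \ref{lem:match_3} and \ref{lem:match_two_cl}.
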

\begin{proof}
We prove that $\mathcal{C}_{\GV}$ enjoys $3$-IP. 
Let $P^+$ and $P^-$ be any finite sets of propositional variables and let $M_0 = (W_0, R_0, \Vdash_0)$ and $M_1  = (W_1, R_1, \Vdash_1)$ be Kripke models in $\mathcal{C}_{\GV}$, let $w_0 \in W_0$ and $w_1 \in W_1$, and suppose $(M_0, w_0) \xr{3} (M_1, w_1)$. 
For $i \in \{0, 1\}$, let $x_i$ be the root element of $M_i$ and $y_i, z_i$ be two distinct final elements of $M_i$. 
We distinguish the following three cases. 

\paragraph*{Case 1:} $w_0 \in \{y_0, z_0\}$ and $w_1 \in \{y_1, z_1\}$. 

\paragraph*{Case 2:} $w_0 \in \{y_0, z_0\}$ and $w_1 = x_1$. 

\medskip

For the case where $w_0 = x_0$ and $w_1 \in \{y_1, z_1\}$, we can draw the required frame $(W^*, R^*)$ similar to that corresponding to Case 2 in Figure \ref{fig:GV_cases}. 
In subsequent proofs, we will basically omit the mention of such similar cases.
In Case 2, by Lemma \ref{lem:match_2}, we have that $(M_0, w_0) \xr{0} (M_1, u_1)$ for $u_1 \in \{y_1, z_1\}$. 

\paragraph*{Case 3:} $w_0 = x_0$ and $w_1 = x_1$. 

\medskip

Since both $M_0$ and $M_1$ consist of two clusters, by Lemma \ref{lem:match_two_cl}, there exist two distinct final elements $u_1$ and $v_1$ of $M_1$ such that $(M_0, y_0) \xr{0} (M_1, u_1)$ and $(M_0, z_0) \xr{0} (M_1, v_1)$. 

\medskip

Our frames $(W^*, R^*)$ corresponding to each of the three cases are shown together in Figure \ref{fig:GV_cases}. 
It is easy to check that these frames satisfy all the required conditions.

\begin{figure}[ht]
\centering
\begin{tikzpicture}
\fill (0.7, 1.5) circle (2pt);
\fill (-0.7, 1.5) circle (2pt);
\fill (0, 0) circle (2pt);
\draw (-0.7, 1.5) node[above]{$(w_0, w_1)$};
\draw (0.7, 1.5) node[above]{$(w_0, w_1)$};
\draw (0, 0) node[left]{$(w_0, w_1)$};
\draw (0, -0.5) node[below]{Case 1};

\draw[->, >=Stealth] (0,0.1)--(-0.7, 1.4);
\draw[->, >=Stealth] (0,0.1)--(0.7, 1.4);

\draw [dashed] (2, -1)--(2, 2);

\fill (4.7, 1.5) circle (2pt);
\fill (3.3, 1.5) circle (2pt);
\fill (4, 0) circle (2pt);
\draw (3.3, 1.5) node[above]{$(w_0, y_1)$};
\draw (4.7, 1.5) node[above]{$(w_0, z_1)$};
\draw (4, 0) node[left]{$(w_0, w_1)$};

\draw (4, -0.5) node[below]{Case 2};

\draw[->, >=Stealth] (4,0.1)--(3.3, 1.4);
\draw[->, >=Stealth] (4,0.1)--(4.7, 1.4);

\draw [dashed] (6, -1)--(6, 2);

\fill (8.7, 1.5) circle (2pt);
\fill (7.3, 1.5) circle (2pt);
\fill (8, 0) circle (2pt);
\draw (7.3, 1.5) node[above]{$(y_0, u_1)$};
\draw (8.7, 1.5) node[above]{$(z_0, v_1)$};
\draw (8, 0) node[left]{$(w_0, w_1)$};
\draw (8, -0.5) node[below]{Case 3};

\draw[->, >=Stealth] (8,0.1)--(7.3, 1.4);
\draw[->, >=Stealth] (8,0.1)--(8.7, 1.4);
\end{tikzpicture}
\caption{The frames $(W^*, R^*)$}\label{fig:GV_cases}
\end{figure}
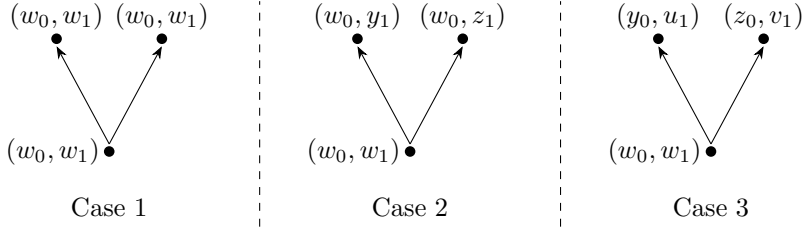
\end{proof}

\subsection{$\Gamma(\LV, 2, 1)$}

Let $\mathcal{C}_{\Gamma(\LV, 2, 1)}$ be the class of all Kripke models whose frames are of the form shown in Figure \ref{fig:LV,2,1}. 
The logic $\Gamma(\LV, 2, 1)$ is sound and complete with respect to $\mathcal{C}_{\Gamma(\LV, 2, 1)}$. 

\begin{figure}[ht]
\centering
\begin{tikzpicture}
\fill (-0.2, 1.5) circle (2pt);
\fill (0.8, 1.5) circle (2pt);
\fill (2.2, 1.5) circle (2pt);
\fill (3.2, 1.5) circle (2pt);

\draw[<->, >=Stealth] (-0.1,1.5)--(0.7, 1.5);
\draw[<->, >=Stealth] (2.3,1.5)--(3.1, 1.5);

\fill (1.5, 0) circle (2pt);

\draw[->, >=Stealth] (1.5,0.1)--(-0.2, 1.4);
\draw[->, >=Stealth] (1.5,0.1)--(0.8, 1.4);
\draw[->, >=Stealth] (1.5,0.1)--(2.2, 1.4);
\draw[->, >=Stealth] (1.5,0.1)--(3.2, 1.4);
\end{tikzpicture}
\caption{The frame of $\Gamma(\LV, 2, 1)$}\label{fig:LV,2,1}
\end{figure}
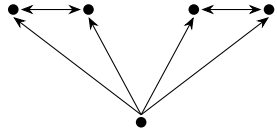

\begin{thm}\label{thm:LV,2,1}
$\Gamma(\LV, 2, 1)$ has ULIP. 
\end{thm}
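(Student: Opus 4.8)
The plan is to follow the now-established template: prove that the class $\mathcal{C}_{\Gamma(\LV, 2, 1)}$ enjoys $n$-IP for a suitable $n$ and then invoke Theorem \ref{thm:main}. The frame here has a root below a layer consisting of \emph{two} two-element clusters (compare $\Gamma(\LS, 2, 1)$, which had a single two-element cluster, and $\GV$, which had two singleton clusters). Since matching between clusters at the top level requires looking one step beyond the root, and then matching elements inside a cluster requires another step, I expect the magic number to be $n = 3$, exactly as for $\GV$. So the statement to establish is: if $(M_0, w_0) \xrightarrow[3]{(P^+, P^-)} (M_1, w_1)$, then a suitable $\mathcal{F}^*$ with the properties of Remark \ref{labeling} exists.

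First I would set up notation as in the previous proofs: for $i \in \{0,1\}$ let $x_i$ be the root of $M_i$ and let $C_i^0 = \{y_i^0, z_i^0\}$, $C_i^1 = \{y_i^1, z_i^1\}$ be the two final clusters. Then I would split into cases according to where $w_0$ and $w_1$ sit. When $w_0$ is a final element, Lemma \ref{lem:match_2} (applied with the hypothesis at level $3 \geq 2$) gives that the cluster containing $w_0$ matches every cluster of $M_1$, and in the sub-case $w_1 = x_1$ this lets me populate the top layer; when both $w_0$ and $w_1$ are final, the matching is even more direct. The genuinely new case is $w_0 = x_0$ and $w_1 = x_1$. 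Here I would apply Lemma \ref{lem:match_two_cl}: since both models have exactly two clusters, the level-$3$ hypothesis yields distinct clusters $D_1^0, D_1^1$ of $M_1$ with $C_0^k$ matching $D_1^k$ for $k \in \{0,1\}$. Then, because each matched pair of clusters consists of two elements, Lemma \ref{lem:match_two} gives, inside each pair, a bijective element-level matching: for $k \in \{0,1\}$ there exist distinct $u_1^k, v_1^k$ in $D_1^k$ with $(M_0, y_0^k) \xrightarrow[0]{} (M_1, u_1^k)$ and $(M_0, z_0^k) \xrightarrow[0]{} (M_1, v_1^k)$.

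With these ingredients I would exhibit $\mathcal{F}^*$ directly using the rough labeling notation of Remark \ref{labeling}: the root is labeled $(w_0, w_1)$, and above it I place two clusters, one labeled $\{(y_0^0, u_1^0), (z_0^0, v_1^0)\}$ and the other $\{(y_0^1, u_1^1), (z_0^1, v_1^1)\}$, with the two-element cluster structure internal to each and each final element seen from the root. The frame is manifestly of the shape in Figure \ref{fig:LV,2,1}, so condition~1 holds; condition~2 holds by the choice of root label; the back-and-forth conditions 3--5 hold because every label was chosen precisely so that the $R$-step matches on both coordinates (the root sees each final cluster in $M_0$ and $M_1$, and the within-cluster relations match since clusters go to clusters); and condition~6, the level-$0$ arrow at every node, is exactly what the matching lemmas delivered. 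I would draw the resulting frame in a figure analogous to Figure \ref{fig:GV_cases} and remark that checking the Remark \ref{labeling} conditions is routine.

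The main obstacle is bookkeeping rather than any deep difficulty: I must make sure that in the central case the two \emph{distinct} target clusters $D_1^0, D_1^1$ are used, so that condition~5 (every cluster of $M_1$ is reached from the root of $\mathcal{F}^*$) is satisfied, and that Lemma \ref{lem:match_two} is applied correctly to get the element-level injectivity within each cluster so that the two labeled points in each $\mathcal{F}^*$-cluster are genuinely distinct. A secondary point worth verifying is that $n = 3$ really is needed and sufficient: the inner application of Lemma \ref{lem:match_two_cl} already consumes the level-$3$ hypothesis, and Lemma \ref{lem:match_two} only needs the level-$0$ matching that the cluster-matching provides, so no stronger hypothesis is required. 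I expect the side cases (one of $w_0, w_1$ a final element) to go through exactly as the analogous cases in Theorem \ref{thm:GV}, and I would treat them briefly, omitting the symmetric sub-case as the paper does elsewhere.
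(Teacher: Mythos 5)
Your proposal matches the paper's proof essentially step for step: the same choice of $n=3$, the same three-way case split on whether $w_0$ and $w_1$ are final elements or roots, the same use of Lemma \ref{lem:match_2} in the mixed case and of Lemmas \ref{lem:match_two_cl} and \ref{lem:match_two} in the root--root case to obtain distinct target clusters and injective element-level matchings, and the same construction of $\mathcal{F}^*$ via the labeling of Remark \ref{labeling}. The only detail left implicit (and correctly deferred to the pattern of Theorem \ref{thm:GV}) is that in the side cases the single matched cluster must be duplicated so that $\mathcal{F}^*$ has the required two two-element clusters, which is exactly what the paper's Figure \ref{fig:LV,2,1_case12} does.
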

\begin{proof}
We prove that $\mathcal{C}_{\Gamma(\LV, 2, 1)}$ enjoys $3$-IP. 
Let $P^+$ and $P^-$ be any finite sets of propositional variables and let $M_0 = (W_0, R_0, \Vdash_0)$ and $M_1  = (W_1, R_1, \Vdash_1)$ be Kripke models in $\mathcal{C}_{\Gamma(\LV, 2, 1)}$, let $w_0 \in W_0$ and $w_1 \in W_1$, and suppose $(M_0, w_0) \xr{3} (M_1, w_1)$. 
For $i \in \{0, 1\}$, let $x_i$ be the root element of $M_i$ and let $C_i^0 = \{y_i^0, z_i^0\}$ and $C_i^1 = \{y_i^1, z_i^1\}$ be two distinct clusters of final elements of $M_i$. 
We distinguish the following three cases. 

\paragraph*{Case 1:} $w_0 \in C_0^0$ and $w_1 \in C_1^0$. 

\medskip

By Lemma \ref{lem:match_1}, we have that $C_0^0$ matches $C_1^0$. 
Since each cluster consists of two elements, by Lemma \ref{lem:match_two}, there exist $u_1^0, v_1^0 \in C_1^0$ such that $(M_0, y_0^0) \xr{0} (M_1, u_1^0)$ and $(M_0, z_0^0) \xr{0} (M_1, v_1^0)$. 

\paragraph*{Case 2:} $w_0 \in C_0^0$ and $w_1 = x_1$. 

\medskip

By Lemma \ref{lem:match_2}, $C_0^0$ matches both $C_1^0$ and $C_1^1$. 
Then, by Lemma \ref{lem:match_two}, for $k \in \{0, 1\}$, there exist $u_1^k, v_1^k \in C_1^k$ such that $(M_0, y_0^0) \xr{0} (M_1, u_1^k)$ and $(M_0, z_0^0) \xr{0} (M_1, v_1^k)$.

\paragraph*{Case 3:} $w_0 = x_0$ and $w_1 = x_1$. 

\medskip

Since both $M_0$ and $M_1$ consist of two clusters, by Lemma \ref{lem:match_two_cl}, there exist two distinct clusters $D_1^0$ and $D_1^1$ of final elements of $M_1$ such that for $k \in \{0, 1\}$, $C_0^k$ matches $D_1^k$. 
Also, since each cluster consist of two elements, by Lemma \ref{lem:match_two}, for $k \in \{0, 1\}$, there exist $u_1^k, v_1^k \in D_1^k$ such that $(M_0, y_0^k) \xr{0} (M_1, u_1^k)$ and $(M_0, z_0^k) \xr{0} (M_1, v_1^k)$. 

\medskip

Our frames $(W^*, R^*)$ corresponding to Cases 1 and 2 are shown together in Figure \ref{fig:LV,2,1_case12}. 
Figure \ref{fig:LV,2,1_case3} shows our frame $(W^*, R^*)$ corresponding to Case 3. 
These frames satisfy all the required conditions.

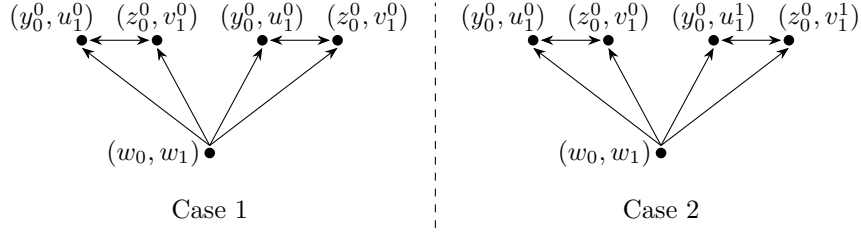
\begin{figure}[ht]
\centering
\begin{tikzpicture}

\fill (-0.2, 1.5) circle (2pt);
\fill (0.8, 1.5) circle (2pt);
\fill (2.2, 1.5) circle (2pt);
\fill (3.2, 1.5) circle (2pt);
\draw (-0.6, 1.5) node[above]{$(y_0^0, u_1^0)$};
\draw (0.8, 1.5) node[above]{$(z_0^0, v_1^0)$};
\draw (2.2, 1.5) node[above]{$(y_0^0, u_1^0)$};
\draw (3.6, 1.5) node[above]{$(z_0^0, v_1^0)$};
\draw (1.5, -0.5) node[below]{Case 1};

\draw[<->, >=Stealth] (-0.1,1.5)--(0.7, 1.5);
\draw[<->, >=Stealth] (2.3,1.5)--(3.1, 1.5);

\fill (1.5, 0) circle (2pt);
\draw (1.5, 0) node[left]{$(w_0, w_1)$};

\draw[->, >=Stealth] (1.5,0.1)--(-0.2, 1.4);
\draw[->, >=Stealth] (1.5,0.1)--(0.8, 1.4);
\draw[->, >=Stealth] (1.5,0.1)--(2.2, 1.4);
\draw[->, >=Stealth] (1.5,0.1)--(3.2, 1.4);

\draw [dashed] (4.5, -1)--(4.5, 2);

\fill (5.8, 1.5) circle (2pt);
\fill (6.8, 1.5) circle (2pt);
\fill (8.2, 1.5) circle (2pt);
\fill (9.2, 1.5) circle (2pt);
\draw (5.4, 1.5) node[above]{$(y_0^0, u_1^0)$};
\draw (6.8, 1.5) node[above]{$(z_0^0, v_1^0)$};
\draw (8.2, 1.5) node[above]{$(y_0^0, u_1^1)$};
\draw (9.6, 1.5) node[above]{$(z_0^0, v_1^1)$};

\draw (7.5, -0.5) node[below]{Case 2};

\draw[<->, >=Stealth] (5.9,1.5)--(6.7, 1.5);
\draw[<->, >=Stealth] (8.3,1.5)--(9.1, 1.5);

\fill (7.5, 0) circle (2pt);
\draw (7.5, 0) node[left]{$(w_0, w_1)$};

\draw[->, >=Stealth] (7.5,0.1)--(5.8, 1.4);
\draw[->, >=Stealth] (7.5,0.1)--(6.8, 1.4);
\draw[->, >=Stealth] (7.5,0.1)--(8.2, 1.4);
\draw[->, >=Stealth] (7.5,0.1)--(9.2, 1.4);
\end{tikzpicture}
\caption{The frames $(W^*, R^*)$ in Cases 1 and 2}\label{fig:LV,2,1_case12}
\end{figure}

\begin{figure}[ht]
\centering
\begin{tikzpicture}
\fill (-0.2, 1.5) circle (2pt);
\fill (0.8, 1.5) circle (2pt);
\fill (2.2, 1.5) circle (2pt);
\fill (3.2, 1.5) circle (2pt);
\draw (-0.6, 1.5) node[above]{$(y_0^0, u_1^0)$};
\draw (0.8, 1.5) node[above]{$(z_0^0, v_1^0)$};
\draw (2.2, 1.5) node[above]{$(y_0^1, u_1^1)$};
\draw (3.6, 1.5) node[above]{$(z_0^1, v_1^1)$};

\draw[<->, >=Stealth] (-0.1,1.5)--(0.7, 1.5);
\draw[<->, >=Stealth] (2.3,1.5)--(3.1, 1.5);

\fill (1.5, 0) circle (2pt);
\draw (1.5, 0) node[left]{$(w_0, w_1)$};

\draw[->, >=Stealth] (1.5,0.1)--(-0.2, 1.4);
\draw[->, >=Stealth] (1.5,0.1)--(0.8, 1.4);
\draw[->, >=Stealth] (1.5,0.1)--(2.2, 1.4);
\draw[->, >=Stealth] (1.5,0.1)--(3.2, 1.4);
\end{tikzpicture}
\caption{The frame $(W^*, R^*)$ in Case 3}\label{fig:LV,2,1_case3}
\end{figure}
\end{proof}

\subsection{$\Gamma(\LV, \omega, 1)$}

Let $\mathcal{C}_{\Gamma(\LV, \omega, 1)}$ be the class of all finite Kripke models whose frames are of the form shown in Figure \ref{fig:LV,o,1}. 
The logic $\Gamma(\LV, \omega, 1)$ is sound and complete with respect to $\mathcal{C}_{\Gamma(\LV, \omega, 1)}$. 

\begin{figure}[ht]
\centering
\begin{tikzpicture}
\draw (0.3, 1.5) circle (1 and 0.5);
\draw (2.7, 1.5) circle (1 and 0.5);
\fill (-0.4, 1.5) circle (2pt);
\fill (0.2, 1.5) circle (2pt);
\draw (1, 1.5) node{$\cdots$};
\fill (2, 1.5) circle (2pt);
\fill (2.6, 1.5) circle (2pt);
\draw (3.4, 1.5) node{$\cdots$};

\draw[<->, >=Stealth] (-0.3,1.5)--(0.1, 1.5);
\draw[<->, >=Stealth] (0.3,1.5)--(0.7, 1.5);
\draw[<->, >=Stealth] (2.1,1.5)--(2.5, 1.5);
\draw[<->, >=Stealth] (2.7,1.5)--(3.1, 1.5);

\fill (1.5, 0) circle (2pt);

\draw[->, >=Stealth] (1.5,0.1)--(-0.4, 1.4);
\draw[->, >=Stealth] (1.5,0.1)--(0.2, 1.4);
\draw[->, >=Stealth] (1.5,0.1)--(2, 1.4);
\draw[->, >=Stealth] (1.5,0.1)--(2.6, 1.4);
\end{tikzpicture}
\caption{A frame of $\Gamma(\LV, \omega, 1)$}\label{fig:LV,o,1}
\end{figure}
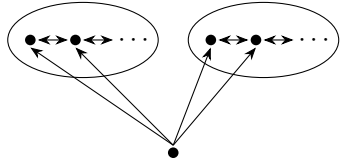

\begin{thm}\label{thm:LV,o,1}
$\Gamma(\LV, \omega, 1)$ has ULIP. 
\end{thm}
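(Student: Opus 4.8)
The plan is to prove that the class $\mathcal{C}_{\Gamma(\LV, \omega, 1)}$ enjoys $3$-IP and then invoke Theorem \ref{thm:main}. The frames here differ from those of $\Gamma(\LV, 2, 1)$ only in that the two two-element clusters are replaced by an arbitrary finite number of two-element clusters sitting above the root. Accordingly, I expect the argument to run exactly parallel to the proof of Theorem \ref{thm:LV,2,1}, with the combinatorial matching of a \emph{fixed pair} of clusters replaced by a matching of the \emph{whole families} of clusters. Throughout I adopt the rough labeling notation of Remark \ref{labeling}, so it suffices to exhibit a frame $(W^*, R^*)$ of the admissible shape whose labeled elements satisfy clauses 1--6 there.

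\textbf{Key steps.} First I would fix Kripke models $M_0, M_1 \in \mathcal{C}_{\Gamma(\LV, \omega, 1)}$ with roots $x_0, x_1$ and two-element final clusters $C_0^0, \ldots, C_0^{j_0}$ and $C_1^0, \ldots, C_1^{j_1}$, assume $(M_0, w_0) \xr{3} (M_1, w_1)$, and split into the same three cases as before according to whether each $w_i$ is a final element or the root. In the case $w_0 \in C_0^k$ and $w_1 \in C_1^l$, Lemma \ref{lem:match_1} gives that $C_0^k$ matches $C_1^l$, and Lemma \ref{lem:match_two} then produces the element-level matching within those two clusters; the required frame is a single two-element cluster above the root, labeled exactly as in Case 1 of Figure \ref{fig:LV,2,1_case12}. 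In the mixed case $w_0 \in C_0^k$, $w_1 = x_1$, Lemma \ref{lem:match_2} shows $C_0^k$ matches \emph{every} cluster $C_1^l$ of $M_1$, so by Lemma \ref{lem:match_two} each $C_1^l$ contributes a two-element cluster above a common root, giving a frame with $j_1 + 1$ clusters. The main work is Case 3, $w_0 = x_0$ and $w_1 = x_1$: here I would apply Lemma \ref{lem:match_3} to obtain a matching relation between the two families of clusters, matching each $C_0^k$ to some $C_1^l$ and conversely, and then use Lemma \ref{lem:match_two} cluster-by-cluster to get the element-level data $(M_0, y_0^k) \xr{0} (M_1, u_1^k)$ and $(M_0, z_0^k) \xr{0} (M_1, v_1^k)$. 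For each matched pair I place one two-element cluster in $W^*$ above the root $(w_0, w_1)$, labeled as in Figure \ref{fig:LV,2,1_case3}.

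\textbf{Main obstacle.} The chief difficulty, absent in the two-cluster case, is that Lemma \ref{lem:match_3} only guarantees a \emph{many-to-many} matching relation between the clusters of $M_0$ and those of $M_1$, not a bijection, so I cannot simply pair them off as in Lemma \ref{lem:match_two_cl}. The resolution is that $n$-IP does not require $W^*$ to project bijectively onto either factor: I can freely include one $W^*$-cluster for \emph{each} matched pair $(C_0^k, C_1^l)$ in the matching relation (with repetitions of the same $C_0^k$ or $C_1^l$ permitted, since the rough labeling allows distinct $W^*$-elements with overlapping first or second coordinates). Clause 2 of Remark \ref{labeling} holds by including the root; clauses 4 and 5 (the back-and-forth conditions) follow precisely because the matching relation hits every cluster on both sides, which is exactly the content of the two parts of Lemma \ref{lem:match_3}; and clause 6 holds at the root because $(M_0, x_0) \xr{0} (M_1, x_1)$ follows from $(M_0, x_0) \xr{3} (M_1, x_1)$, and at the final elements by the choice of labels. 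Since every such $(W^*, R^*)$ has the admissible shape of Figure \ref{fig:LV,o,1}, clause 1 is immediate, and Theorem \ref{thm:main} yields ULIP for $\Gamma(\LV, \omega, 1)$.
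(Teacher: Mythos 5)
There is a genuine gap, and it occurs before the proof proper begins: you have misidentified the frame class. In the notation $\Gamma(\lambda, m, 1)$ used throughout the paper, the underlying intermediate logic $\lambda$ governs the \emph{number} of final clusters sitting above the root, while the parameter $m$ governs the \emph{size} of each such cluster. Hence the frames of $\Gamma(\LV, \omega, 1)$ (Figure \ref{fig:LV,o,1}) consist of a root together with \emph{exactly two} final clusters, each of arbitrary finite size; the class you describe --- a root with an arbitrary finite number of two-element clusters --- is $\mathcal{C}_{\Gamma(\LP_2, 2, 1)}$, and what you have sketched is essentially the proof of Theorem \ref{thm:LP2,2,1} rather than of Theorem \ref{thm:LV,o,1}. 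This is not merely a labelling slip: in your Case 3 you propose to place one $W^*$-cluster above the root for \emph{each} pair $(C_0^k, C_1^l)$ in the many-to-many matching relation supplied by Lemma \ref{lem:match_3}. For the actual class $\mathcal{C}_{\Gamma(\LV, \omega, 1)}$ that construction violates clause 1 of Remark \ref{labeling}, since the resulting frame need not have exactly two final clusters; and your repeated appeals to Lemma \ref{lem:match_two} are inapplicable, since that lemma requires both clusters to consist of exactly two elements, which fails here.

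The correct argument is in fact simpler than the obstacle you flag. Because each $M_i$ has exactly two final clusters, Case 3 is handled by Lemma \ref{lem:match_two_cl} (the marriage lemma applied at the level of clusters), which yields \emph{distinct} clusters $D_1^0, D_1^1$ of $M_1$ with $C_0^k$ matching $D_1^k$ for $k \in \{0,1\}$ --- precisely the clean pairing you claimed was unavailable. And since the clusters may be arbitrarily large, no injective element-level pairing is needed at all: for each matched pair of clusters one simply takes the full set of pairs $(u_0, u_1)$ with $(M_0, u_0) \xr{0} (M_1, u_1)$ as a single cluster of $W^*$, exactly as in the proof for $\mathbf{S4.4}$ (Theorem \ref{thm:S4.4}). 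This always produces a frame with precisely two final clusters above the root (duplicating the single cluster $C^*$ in Case 1), which is what membership in $\mathcal{C}_{\Gamma(\LV, \omega, 1)}$ demands.
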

\begin{proof}
We prove that $\mathcal{C}_{\Gamma(\LV, \omega, 1)}$ enjoys $3$-IP. 
Let $P^+$ and $P^-$ be any finite sets of propositional variables and let $M_0 = (W_0, R_0, \Vdash_0)$ and $M_1  = (W_1, R_1, \Vdash_1)$ be Kripke models in $\mathcal{C}_{\Gamma(\LV, \omega, 1)}$, let $w_0 \in W_0$ and $w_1 \in W_1$, and suppose $(M_0, w_0) \xr{3} (M_1, w_1)$. 
For $i \in \{0, 1\}$, let $x_i$ be the root element of $M_i$ and let $C_i^0$ and $C_i^1$ be two distinct clusters of final elements of $M_i$. 
We distinguish the following three cases. 

\paragraph*{Case 1:} $w_0 \in C_0^0$ and $w_1 \in C_1^0$. 

\medskip

By Lemma \ref{lem:match_1}, $C_0^0$ matches $C_1^0$. 
Let
\[
    C^* : = \{(u_0^0, u_1^0) \mid u_0^0 \in C_0^0, u_1^0 \in C_1^0,\ \text{and}\ (M_0, u_0^0) \xr{0} (M_1, u_1^0)\}.
\]

\paragraph*{Case 2:} $w_0 \in C_0^0$ and $w_1 = x_1$. 

\medskip

By Lemma \ref{lem:match_2}, $C_0^0$ matches both $C_1^0$ and $C_1^1$. 
Let
\begin{align*}
    C^{*, 0} : & = \{(u_0^0, u_1^0) \mid u_0^0 \in C_0^0, u_1^0 \in C_1^0,\ \text{and}\ (M_0, u_0^0) \xr{0} (M_1, u_1^0)\}, \\
    C^{*, 1} : & = \{(u_0^0, u_1^1) \mid u_0^0 \in C_0^0, u_1^1 \in C_1^1,\ \text{and}\ (M_0, u_0^0) \xr{0} (M_1, u_1^1)\}. 
\end{align*}

\paragraph*{Case 3:} $w_0 = x_0$ and $w_1 = x_1$. 

\medskip

Since both $M_0$ and $M_1$ consist of two clusters, by Lemma \ref{lem:match_two_cl}, there exist two distinct clusters $D_1^0$ and $D_1^1$ of final elements of $M_1$ such that for $k \in \{0, 1\}$, $C_0^k$ matches $D_1^k$. 
Thus, let
\begin{align*}
    C^{*,0} : & = \{(u_0^0, u_1^0) \mid u_0^0 \in C_0^0, u_1^0 \in D_1^0,\ \text{and}\ (M_0, u_0^0) \xr{0} (M_1, u_1^0)\}, \\
    C^{*,1} : & = \{(v_0^0, v_1^0) \mid v_0^0 \in C_0^1, v_1^1 \in D_1^1,\ \text{and}\ (M_0, v_0^0) \xr{0} (M_1, v_1^1)\}. 
\end{align*}

\medskip

Our frames $(W^*, R^*)$ corresponding to these three cases are shown together in Figure \ref{fig:LV,o,1_cases}. 
We can easily check that these frames satisfy all the required conditions.

\begin{figure}[ht]
\centering
\begin{tikzpicture}
\draw (0.3, 1.5) circle (1 and 0.5);
\draw (2.7, 1.5) circle (1 and 0.5);
\fill (-0.4, 1.5) circle (2pt);
\fill (0.2, 1.5) circle (2pt);
\draw (1, 1.5) node{$\cdots$};
\fill (2, 1.5) circle (2pt);
\fill (2.6, 1.5) circle (2pt);
\draw (3.4, 1.5) node{$\cdots$};
\draw (0.3, 2) node[above]{$C^*$};
\draw (2.7, 2) node[above]{$C^*$};

\draw[<->, >=Stealth] (-0.3,1.5)--(0.1, 1.5);
\draw[<->, >=Stealth] (0.3,1.5)--(0.7, 1.5);
\draw[<->, >=Stealth] (2.1,1.5)--(2.5, 1.5);
\draw[<->, >=Stealth] (2.7,1.5)--(3.1, 1.5);

\fill (1.5, 0) circle (2pt);
\draw (1.5, 0) node[left]{$(w_0, w_1)$};
\draw (1.5, -0.5) node[below]{Case 1};

\draw[->, >=Stealth] (1.5,0.1)--(-0.4, 1.4);
\draw[->, >=Stealth] (1.5,0.1)--(0.2, 1.4);
\draw[->, >=Stealth] (1.5,0.1)--(2, 1.4);
\draw[->, >=Stealth] (1.5,0.1)--(2.6, 1.4);

\draw [dashed] (4.5, -1)--(4.5, 2);

\draw (6.3, 1.5) circle (1 and 0.5);
\draw (8.7, 1.5) circle (1 and 0.5);
\fill (5.6, 1.5) circle (2pt);
\fill (6.2, 1.5) circle (2pt);
\draw (7, 1.5) node{$\cdots$};
\fill (8, 1.5) circle (2pt);
\fill (8.6, 1.5) circle (2pt);
\draw (9.4, 1.5) node{$\cdots$};
\draw (6.3, 2) node[above]{$C^{*,0}$};
\draw (8.7, 2) node[above]{$C^{*,1}$};

\draw[<->, >=Stealth] (5.7,1.5)--(6.1, 1.5);
\draw[<->, >=Stealth] (6.3,1.5)--(6.7, 1.5);
\draw[<->, >=Stealth] (8.1,1.5)--(8.5, 1.5);
\draw[<->, >=Stealth] (8.7,1.5)--(9.1, 1.5);

\fill (7.5, 0) circle (2pt);
\draw (7.5, 0) node[left]{$(w_0, w_1)$};
\draw (7.5, -0.5) node[below]{Cases 2 and 3};

\draw[->, >=Stealth] (7.5,0.1)--(5.6, 1.4);
\draw[->, >=Stealth] (7.5,0.1)--(6.2, 1.4);
\draw[->, >=Stealth] (7.5,0.1)--(8, 1.4);
\draw[->, >=Stealth] (7.5,0.1)--(8.6, 1.4);

\end{tikzpicture}
\caption{The frame $(W^*, R^*)$}\label{fig:LV,o,1_cases}
\end{figure}
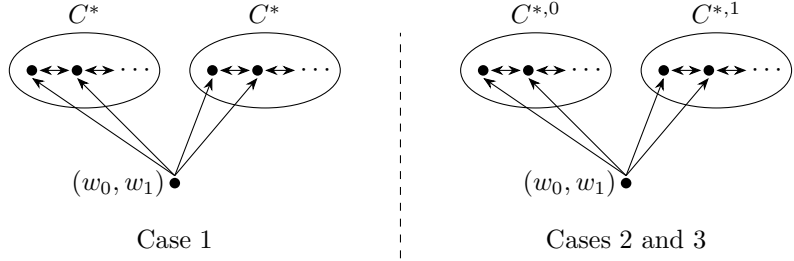
\end{proof}

\section{Modal companions of $\LP_2$}\label{sec:LP2}

In this section, we prove that the three logics $\mathbf{GW} = \Gamma(\LP_2, 1, 1)$, $\Gamma(\LP_2, 2, 1)$, and $\Gamma(\LP_2, \omega, 1)$ have ULIP. 
In particular, LIP for $\Gamma(\LP_2, 2, 1)$ is new.

\subsection{$\mathbf{GW}$}

Let $\mathcal{C}_{\mathbf{GW}}$ be the class of all finite Kripke models whose frames are of the form shown in Figure \ref{fig:GW}. 
The logic $\mathbf{GW}$ is sound and complete with respect to $\mathcal{C}_{\mathbf{GW}}$. 

\begin{figure}[ht]
\centering
\begin{tikzpicture}
\fill (-0.2, 1.5) circle (2pt);
\fill (0.8, 1.5) circle (2pt);
\draw (2.2, 1.5) node{$\cdots$};
\fill (3.2, 1.5) circle (2pt);

\fill (1.5, 0) circle (2pt);

\draw[->, >=Stealth] (1.5,0.1)--(-0.2, 1.4);
\draw[->, >=Stealth] (1.5,0.1)--(0.8, 1.4);
\draw[->, >=Stealth] (1.5,0.1)--(3.2, 1.4);
\end{tikzpicture}
\caption{A frame of $\mathbf{GW}$}\label{fig:GW}
\end{figure}

\begin{thm}[Shimura \cite{Shimura92}; Maksimova \cite{Maksimova14}]\label{thm:GW}
$\mathbf{GW}$ has ULIP. 
\end{thm}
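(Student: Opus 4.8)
The plan is to prove that the class $\mathcal{C}_{\mathbf{GW}}$ enjoys $3$-IP and then invoke Theorem \ref{thm:main}. This follows exactly the template established for the logics $\GV$, $\Gamma(\LV, 2, 1)$, and $\Gamma(\LV, \omega, 1)$ in the previous section: the frames of $\mathbf{GW}$ are rooted, of height $2$, with arbitrarily many \emph{singleton} final clusters (unlike $\LV$, where clusters have two elements, and unlike $\LP_2$'s companion $\Gamma(\LP_2, \omega, 1)$, where clusters are arbitrary finite). So I expect the construction to be a simplification of the $\Gamma(\LV, \omega, 1)$ proof, with each cluster a single point.

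First I would fix finite sets $P^+, P^-$, models $M_0, M_1 \in \mathcal{C}_{\mathbf{GW}}$, points $w_0 \in W_0$, $w_1 \in W_1$ with $(M_0, w_0) \xr{3} (M_1, w_1)$, and name the root $x_i$ and the singleton final clusters $C_i^0, \dots, C_i^{j_i}$ of each $M_i$. I would then split into the three cases determined by where $w_0$ and $w_1$ sit: (Case 1) both are final, (Case 2) $w_0$ is final and $w_1 = x_1$, and (Case 3) $w_0 = x_0$ and $w_1 = x_1$ (the mirror of Case 2 being omitted as usual). In Case 1, since both $w_0$ and $w_1$ are final points, the assumption $(M_0,w_0)\xr{3}(M_1,w_1)$ degenerates to $(M_0,w_0)\xr{0}(M_1,w_1)$ on the respective final clusters, so the target frame is a single point labelled $(w_0,w_1)$. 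In Case 2, Lemma \ref{lem:match_2} yields that for each final $u_1$ of $M_1$ we have $(M_0, w_0) \xr{0} (M_1, u_1)$, so I would place $w_0$ above the new root paired with every final point of $M_1$. In Case 3, Lemma \ref{lem:match_3} supplies, for each final cluster $C_1^l$ of $M_1$, a matching cluster $C_0^k$ of $M_0$ and hence a final point $u_0$ with $(M_0, u_0) \xr{0} (M_1, u_1)$; symmetrically each final point of $M_0$ is covered.

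The construction of $\mathcal{F}^* = (W^*, R^*)$ is then read off by the rough labelling convention of Remark \ref{labeling}: the root is $(w_0, w_1)$, and the final layer consists of pairs $(u_0, u_1)$ with $(M_0, u_0) \xr{0} (M_1, u_1)$ chosen so that every final point of $M_0$ and every final point of $M_1$ appears in some pair (this is exactly what the matching lemmas guarantee). Each such pair is a singleton final cluster, so $\mathcal{F}^*$ has the $\mathbf{GW}$ shape, giving Clause 1. Clauses 2–6 of Remark \ref{labeling} are then routine: Clause 2 holds since $(w_0,w_1)$ is the root; Clauses 3–5 (the back-and-forth conditions) hold because the root sees every final pair and the final layer is closed under $R^*$ trivially (it is a union of singletons with no outgoing arrows besides reflexive ones); and Clause 6 holds by the defining property of the pairs. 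I would present $\mathcal{F}^*$ as a figure analogous to Figure \ref{fig:LV,o,1_cases}.

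I do not expect any genuine obstacle, since the finite-height, singleton-cluster structure of $\mathbf{GW}$ makes it the easiest of the $\LP_2$-companions and a direct specialization of the already-proved $\Gamma(\LV, \omega, 1)$ argument. The only point requiring a moment's care is ensuring that in Case 3 the chosen pairs genuinely cover \emph{both} all final points of $M_0$ and all final points of $M_1$ — that is, that the matching given by Lemma \ref{lem:match_3} (both clauses) is exploited in both directions when forming $W^*$ — so that the two back conditions (Clauses 4 and 5 of Remark \ref{labeling}) are simultaneously satisfied.
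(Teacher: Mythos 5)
Your proposal matches the paper's proof essentially verbatim: it establishes $3$-IP for $\mathcal{C}_{\mathbf{GW}}$ via the same three-case split on where $w_0$ and $w_1$ sit, with Lemma \ref{lem:match_2} handling Case 2, Lemma \ref{lem:match_3} handling Case 3, and the final layer taken to be the set of pairs $(u_0,u_1)$ with $(M_0,u_0) \xr{0} (M_1,u_1)$, exactly as in the paper. The only cosmetic difference is Case 1, where the paper uses a two-element chain with both points labelled $(w_0,w_1)$ rather than your single reflexive point, so that $\mathcal{F}^*$ literally has the root-plus-final-layer shape required of frames in $\mathcal{C}_{\mathbf{GW}}$.
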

\begin{proof}
We prove that $\mathcal{C}_{\mathbf{GW}}$ enjoys $3$-IP. 
Let $P^+$ and $P^-$ be any finite sets of propositional variables and let $M_0 = (W_0, R_0, \Vdash_0)$ and $M_1  = (W_1, R_1, \Vdash_1)$ be Kripke models in $\mathcal{C}_{\mathbf{GW}}$, let $w_0 \in W_0$ and $w_1 \in W_1$, and suppose $(M_0, w_0) \xr{3} (M_1, w_1)$. 
For $i \in \{0, 1\}$, let $x_i$ be the root element of $M_i$ and let $F_i$ be the set of all final elements of $M_i$. 
We distinguish the following three cases. 

\paragraph*{Case 1:} $w_0 \in F_0$ and $w_1 \in F_1$. 

\medskip

\paragraph*{Case 2:} $w_0 \in F_0$ and $w_1 = x_1$. 

\medskip

By Lemma \ref{lem:match_2}, for all $u_1 \in F_1$, we have that $(M_0, w_0) \xr{0} (M_1, u_1)$. 
Let $F^* : = \{(w_0, u_1) \mid u_1 \in F_1\}$. 

\paragraph*{Case 3:} $w_0 = x_0$ and $w_1 = x_1$. 

\medskip

By Lemma \ref{lem:match_3}, we obtain the following properties:
\begin{enumerate}
    \item For any $u_0 \in F_0$, there exists $u_1 \in F_1$ such that $(M_0, u_0) \xr{0} (M_1, u_1)$. 
    \item For any $u_1 \in F_1$, there exists $u_0 \in F_0$ such that $(M_0, u_0) \xr{0} (M_1, u_1)$. 
\end{enumerate}
Let 
\[
    F^* : = \{(u_0, u_1) \mid u_0 \in F_0, u_1 \in F_1,\ \text{and}\ (M_0, u_0) \xr{0} (M_1, u_1)\}. 
\]

\medskip

Our frames $(W^*, R^*)$ corresponding to these three cases are shown together in Figure \ref{fig:GW_cases}. 
It is easy to check that these frames satisfy all the required conditions.

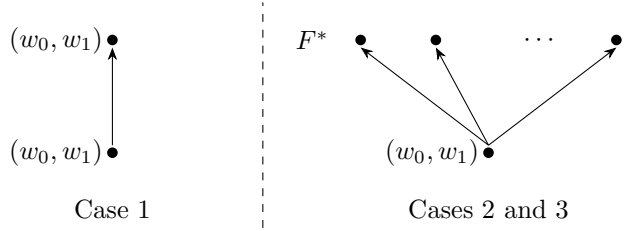
\begin{figure}[ht]
\centering
\begin{tikzpicture}
\fill (-3.5, 1.5) circle (2pt);
\fill (-3.5, 0) circle (2pt);
\draw[->, >=Stealth] (-3.5,0.1)--(-3.5, 1.4);
\draw (-3.5, 0) node[left]{$(w_0, w_1)$}; 
\draw (-3.5, 1.5) node[left]{$(w_0, w_1)$};

\draw [dashed] (-1.5, -1)--(-1.5, 2);

\fill (-0.2, 1.5) circle (2pt);
\fill (0.8, 1.5) circle (2pt);
\draw (2.2, 1.5) node{$\cdots$};
\fill (3.2, 1.5) circle (2pt);
\draw (-3.5, -0.5) node[below]{Case 1};
\draw (-0.5, 1.5) node[left]{$F^*$};

\fill (1.5, 0) circle (2pt);
\draw (1.5, 0) node[left]{$(w_0, w_1)$}; 

\draw[->, >=Stealth] (1.5,0.1)--(-0.2, 1.4);
\draw[->, >=Stealth] (1.5,0.1)--(0.8, 1.4);
\draw[->, >=Stealth] (1.5,0.1)--(3.2, 1.4);
\draw (1.5, -0.5) node[below]{Cases 2 and 3};

\end{tikzpicture}
\caption{The frames $(W^*, R^*)$}\label{fig:GW_cases}
\end{figure}
\end{proof}

\subsection{$\Gamma(\LP_2, 2, 1)$}

Let $\mathcal{C}_{\Gamma(\LP_2, 2, 1)}$ be the class of all finite Kripke models whose frames are of the form shown in Figure \ref{fig:LP2,2,1}. 
The logic $\Gamma(\LP_2, 2, 1)$ is sound and complete with respect to $\mathcal{C}_{\Gamma(\LP_2, 2, 1)}$. 

\begin{figure}[ht]
\centering
\begin{tikzpicture}
\fill (-1, 1.5) circle (2pt);
\fill (-0.2, 1.5) circle (2pt);
\fill (0.6, 1.5) circle (2pt);
\fill (1.4, 1.5) circle (2pt);
\draw (2.2, 1.5) node{$\cdots$};
\fill (3, 1.5) circle (2pt);
\fill (3.8, 1.5) circle (2pt);

\draw[<->, >=Stealth] (-0.9,1.5)--(-0.1, 1.5);
\draw[<->, >=Stealth] (0.7,1.5)--(1.3, 1.5);
\draw[<->, >=Stealth] (3.1,1.5)--(3.7, 1.5);

\fill (1.4, 0) circle (2pt);

\draw[->, >=Stealth] (1.4,0.1)--(-1, 1.4);
\draw[->, >=Stealth] (1.4,0.1)--(-0.2, 1.4);
\draw[->, >=Stealth] (1.4,0.1)--(0.6, 1.4);
\draw[->, >=Stealth] (1.4,0.1)--(1.4, 1.4);
\draw[->, >=Stealth] (1.4,0.1)--(3, 1.4);
\draw[->, >=Stealth] (1.4,0.1)--(3.8, 1.4);
\end{tikzpicture}
\caption{A frame of $\Gamma(\LP_2, 2, 1)$}\label{fig:LP2,2,1}
\end{figure}
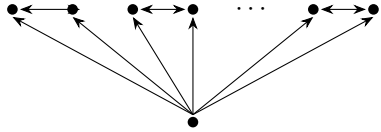

\begin{thm}\label{thm:LP2,2,1}
$\Gamma(\LP_2, 2, 1)$ has ULIP. 
\end{thm}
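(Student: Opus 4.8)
The plan is to show that $\mathcal{C}_{\Gamma(\LP_2, 2, 1)}$ enjoys $3$-IP and then to invoke Theorem \ref{thm:main}; the value $3$ is forced because, exactly as in the proof of Theorem \ref{thm:GW}, the hardest configuration is the one in which both $w_0$ and $w_1$ are roots, and there one must appeal to Lemma \ref{lem:match_3}. So I would fix $M_0, M_1 \in \mathcal{C}_{\Gamma(\LP_2, 2, 1)}$ with roots $x_0, x_1$ and final clusters of size at most $2$, assume $(M_0, w_0) \xr{3} (M_1, w_1)$, and split into the usual three cases according to whether $w_0$ (resp.\ $w_1$) is a final element or the root. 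In Case 1 ($w_0, w_1$ both final) Lemma \ref{lem:match_1} gives that the cluster $C_0^k \ni w_0$ matches the cluster $C_1^l \ni w_1$; in Case 2 ($w_0$ final, $w_1 = x_1$) Lemma \ref{lem:match_2} gives that $C_0^k$ matches every cluster of $M_1$; and in Case 3 ($w_0 = x_0$, $w_1 = x_1$) Lemmas \ref{lem:match_3} and \ref{lem:match_two_cl} give a cluster matching covering all clusters on both sides. The remaining case $w_0 = x_0$, $w_1$ final is symmetric to Case 2.

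The common building block is, for a matched pair $(C_0^k, C_1^l)$ of clusters, a single final cluster $D^*$ of size at most $2$ whose label set projects onto $C_0^k$ under $f_0$ and onto $C_1^l$ under $f_1$, with every label $(u_0, u_1)$ satisfying $(M_0, u_0) \xr{0} (M_1, u_1)$. When one of $C_0^k, C_1^l$ is a singleton this is immediate from Definition \ref{match}; when both have two elements it is exactly the content of Lemma \ref{lem:match_two} (via the marriage Lemma \ref{lem:mariage}), which yields $u_1 \neq v_1$ with $(M_0, y_0) \xr{0} (M_1, u_1)$ and $(M_0, z_0) \xr{0} (M_1, v_1)$, so that $D^* = \{(y_0, u_1), (z_0, v_1)\}$ has the required two-element projections. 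I would then take $(W^*, R^*)$ to be the frame having $(w_0, w_1)$ as a root below the cluster(s) $D^*$ attached in this way: one cluster in Case 1, one cluster per $C_1^l$ in Case 2, and enough clusters to cover every $C_0^k$ and every $C_1^l$ in Case 3. Since each $D^*$ has at most two elements, this is a legitimate frame of $\mathcal{C}_{\Gamma(\LP_2, 2, 1)}$.

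I expect the main obstacle to be precisely the size restriction on clusters, which is what distinguishes this argument from that of $\mathbf{GW}$ (Theorem \ref{thm:GW}), where all final clusters are singletons. Concretely, in Case 1 the label $(w_0, w_1)$ need not lie in any single two-element cluster that projects onto both $C_0^k$ and $C_1^l$: the marriage supplied by Lemma \ref{lem:mariage} may be \emph{crossed}, pairing $w_0$ with an element other than $w_1$ and $w_1$ with an element other than $w_0$, while a surjecting cluster of size $2$ then cannot also contain $(w_0, w_1)$. The resolution is to put $(w_0, w_1)$ as a strict root rather than inside the cluster; this is harmless because p-morphisms need not be injective, so $f_0$ may send both the root and a node of $D^*$ to the same final world $w_0$, and likewise $f_1$ to $w_1$. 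The point to double-check is that this does not break the conditions of Remark \ref{labeling}: the forward condition survives because a final world $R_0$-sees exactly its own cluster, and the back conditions at the root and inside $D^*$ survive because $D^*$ surjects onto $C_0^k$ and onto $C_1^l$.

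Finally I would verify the conditions of Remark \ref{labeling}: condition 1 holds since $(W^*, R^*)$ has the shape of Figure \ref{fig:LP2,2,1}; condition 2 holds by construction; condition 6 holds at the root because $(M_0, w_0) \xr{3} (M_1, w_1)$ yields $(M_0, w_0) \xr{0} (M_1, w_1)$, and at the other nodes by the construction of $D^*$; the forward condition 3 holds because the root maps to worlds seeing the relevant clusters and each $D^*$ is a cluster; and the back conditions 4 and 5 hold because the clusters placed above $(w_0, w_1)$ jointly cover the required $R_0$- and $R_1$-successors (all of $C_0^k$ in Case 1, all clusters of $M_1$ in Case 2, and all clusters of both models in Case 3), while each $D^*$ surjects onto its two matched clusters. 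The required frames can be drawn as in Figure \ref{fig:GW_cases}, with each final point replaced by a cluster $D^*$ of size at most two. ULIP for $\Gamma(\LP_2, 2, 1)$ then follows from Theorem \ref{thm:main}.
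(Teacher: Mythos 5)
Your proposal is correct and follows essentially the same route as the paper's proof: it establishes $3$-IP with the same three-case split, uses Lemmas \ref{lem:match_1}, \ref{lem:match_2}, and \ref{lem:match_3} together with the marriage argument of Lemmas \ref{lem:mariage} and \ref{lem:match_two} to build the two-element clusters, and places $(w_0, w_1)$ as a strict root below them exactly as in Figures \ref{fig:LP2,2,1_case12} and \ref{fig:LP2,2,1_case3} (your observation about the possibly ``crossed'' marriage is precisely why the paper's frames do this). The one slip is your citation of Lemma \ref{lem:match_two_cl} in Case 3, which applies only when both models have exactly two clusters and is not available here; what your construction actually needs, and what the paper uses, is Lemma \ref{lem:match_3} combined with Lemma \ref{lem:match_two}.
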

\begin{proof}
We prove that $\mathcal{C}_{\Gamma(\LP_2, 2, 1)}$ enjoys $3$-IP. 
Let $P^+$ and $P^-$ be any finite sets of propositional variables and let $M_0 = (W_0, R_0, \Vdash_0)$ and $M_1  = (W_1, R_1, \Vdash_1)$ be Kripke models in $\mathcal{C}_{\Gamma(\LP_2, 2, 1)}$, let $w_0 \in W_0$ and $w_1 \in W_1$, and suppose $(M_0, w_0) \xr{3} (M_1, w_1)$. 
For $i \in \{0, 1\}$, let $x_i$ be the root element of $M_i$ and let $C_i^0, C_i^1, \ldots, C_i^{j_i}$ be all the clusters of final elements of $M_i$. 
Also let $C_i^k = \{y_i^k, z_i^k\}$. 
We distinguish the following three cases. 

\paragraph*{Case 1:} $w_0 \in C_0^0$ and $w_1 \in C_1^0$. 

\medskip

By Lemma \ref{lem:match_1}, $C_0^0$ matches $C_1^0$. 
Since these clusters consist of two elements, by Lemma \ref{lem:match_two}, there exist $u_1^0, v_1^0 \in C_1^0$ such that $(M_0, y_0^0) \xr{0} (M_1, u_1^0)$ and $(M_0, z_0^0) \xr{0} (M_1, v_1^0)$. 

\paragraph*{Case 2:} $w_0 \in C_0^0$ and $w_1 = x_1$. 

\medskip

By Lemma \ref{lem:match_2}, $C_0^0$ matches all the clusters $C_1^k$ of final elements of $M_1$. 
Since every cluster consist of two elements, by Lemma \ref{lem:match_two}, for each cluster $C_1^k$ of $M_1$, there exist $u_1^k, v_1^k \in C_1^k$ such that $(M_0, y_0^0) \xr{0} (M_1, u_1^k)$ and $(M_0, z_0) \xr{0} (M_1, v_1^k)$.

\paragraph*{Case 3:} $w_0 = x_0$ and $w_1 = x_1$. 

\medskip

By Lemmas \ref{lem:match_3} and \ref{lem:match_two}, we obtain the following two properties: 
\begin{enumerate}
    \item For any cluster $C_0^k$ of $M_0$, there exist a cluster $C_1^{k'}$ of $M_1$ and $u_1^{k'}, v_1^{k'} \in C_1^{k'}$ such that $u_1^{k'} \neq v_1^{k'}$ and  
\[
    (M_0, y_0^k) \xr{0} (M_1, u_1^{k'})\ \text{and}\ (M_0, z_0^k) \xr{0} (M_1, v_1^{k'}). 
\]

    \item For any cluster $C_1^k$ of $M_1$, there exist a cluster $C_0^{k'}$ of $M_0$ and $u_0^{k'}, v_0^{k'} \in C_0^{k'}$ such that $u_0^{k'} \neq v_0^{k'}$ and  
\[
    (M_0, u_0^{k'}) \xr{0} (M_1, y_1^{k})\ \text{and}\ (M_0, v_0^{k'}) \xr{0} (M_1, z_1^{k}). 
\]
\end{enumerate}

\medskip

Our frames $(W^*, R^*)$ corresponding to Cases 1 and 2 are drawn in Figure \ref{fig:LP2,2,1_case12}. 
Our frame $(W^*, R^*)$ corresponding to Case 3 is shown in Figure \ref{fig:LP2,2,1_case3}.
These frames satisfy all the required conditions.

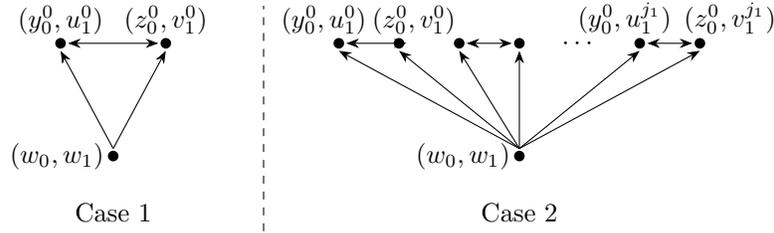
\begin{figure}[ht]
\centering
\begin{tikzpicture}
\fill (-3.3, 1.5) circle (2pt);
\fill (-4.7, 1.5) circle (2pt);
\fill (-4, 0) circle (2pt);
\draw (-4.7, 1.5) node[above]{$(y_0^0, u_1^0)$};
\draw (-3.3, 1.5) node[above]{$(z_0^0, v_1^0)$};
\draw (-4, 0) node[left]{$(w_0, w_1)$};

\draw[->, >=Stealth] (-4,0.1)--(-4.7, 1.4);
\draw[->, >=Stealth] (-4,0.1)--(-3.3, 1.4);
\draw[<->, >=Stealth] (-4.6,1.5)--(-3.4, 1.5);
\draw (-4, -0.5) node[below]{Case 1};

\draw[dashed] (-2, -1) -- (-2, 2);

\fill (-1, 1.5) circle (2pt);
\fill (-0.2, 1.5) circle (2pt);
\fill (0.6, 1.5) circle (2pt);
\fill (1.4, 1.5) circle (2pt);
\draw (2.2, 1.5) node{$\cdots$};
\fill (3, 1.5) circle (2pt);
\fill (3.8, 1.5) circle (2pt);
\draw (1.4, 0) node[left]{$(w_0, w_1)$};
\draw (-1.2, 1.5) node[above]{$(y_0^0, u_1^0)$};
\draw (0, 1.5) node[above]{$(z_0^0, v_1^0)$};
\draw (2.8, 1.5) node[above]{$(y_0^0, u_1^{j_1})$};
\draw (4.2, 1.5) node[above]{$(z_0^0, v_1^{j_1})$};

\draw[<->, >=Stealth] (-0.9,1.5)--(-0.1, 1.5);
\draw[<->, >=Stealth] (0.7,1.5)--(1.3, 1.5);
\draw[<->, >=Stealth] (3.1,1.5)--(3.7, 1.5);

\fill (1.4, 0) circle (2pt);

\draw[->, >=Stealth] (1.4,0.1)--(-1, 1.4);
\draw[->, >=Stealth] (1.4,0.1)--(-0.2, 1.4);
\draw[->, >=Stealth] (1.4,0.1)--(0.6, 1.4);
\draw[->, >=Stealth] (1.4,0.1)--(1.4, 1.4);
\draw[->, >=Stealth] (1.4,0.1)--(3, 1.4);
\draw[->, >=Stealth] (1.4,0.1)--(3.8, 1.4);
\draw (1.4, -0.5) node[below]{Case 2};
\end{tikzpicture}
\caption{The frames $(W^*, R^*)$ in Cases 1 and 2}\label{fig:LP2,2,1_case12}
\end{figure}

\begin{figure}[ht]
\centering
\begin{tikzpicture}
\fill (-4.8, 1.5) circle (2pt);
\fill (-4, 1.5) circle (2pt);
\draw (-3, 1.5) node{$\cdots$};
\fill (-2, 1.5) circle (2pt);
\fill (-1.2, 1.5) circle (2pt);
\fill (4.8, 1.5) circle (2pt);
\fill (4, 1.5) circle (2pt);
\draw (3, 1.5) node{$\cdots$};
\fill (2, 1.5) circle (2pt);
\fill (1.2, 1.5) circle (2pt);
\draw (0, 0) node[left]{$(w_0, w_1)$};
\draw (-5.2, 1.5) node[above]{$(y_0^0, u_1^{0'})$};
\draw (-3.8, 1.5) node[above]{$(z_0^0, v_1^{0'})$};
\draw (-2.2, 1.5) node[above]{$(y_0^{j_0}, u_1^{j_0'})$};
\draw (-0.8, 1.5) node[above]{$(z_0^{j_0}, v_1^{j_0'})$};

\draw (5.2, 1.5) node[above]{$(v_0^{j_1'}, z_1^{j_1})$};
\draw (3.8, 1.5) node[above]{$(u_0^{j_1'}, y_1^{j_1})$};
\draw (2.2, 1.5) node[above]{$(v_0^{0'}, z_1^{0})$};
\draw (0.8, 1.5) node[above]{$(u_0^{0'}, y_1^{0})$};

\draw[<->, >=Stealth] (-4.7,1.5)--(-4.1, 1.5);
\draw[<->, >=Stealth] (-1.9,1.5)--(-1.3, 1.5);
\draw[<->, >=Stealth] (4.7,1.5)--(4.1, 1.5);
\draw[<->, >=Stealth] (1.9,1.5)--(1.3, 1.5);

\fill (0, 0) circle (2pt);

\draw[->, >=Stealth] (0,0.1)--(-4.8, 1.4);
\draw[->, >=Stealth] (0,0.1)--(-4, 1.4);
\draw[->, >=Stealth] (0,0.1)--(-2, 1.4);
\draw[->, >=Stealth] (0,0.1)--(-1.2, 1.4);
\draw[->, >=Stealth] (0,0.1)--(4.8, 1.4);
\draw[->, >=Stealth] (0,0.1)--(4, 1.4);
\draw[->, >=Stealth] (0,0.1)--(2, 1.4);
\draw[->, >=Stealth] (0,0.1)--(1.2, 1.4);
\end{tikzpicture}
\caption{The frame $(W^*, R^*)$ in Case 3}\label{fig:LP2,2,1_case3}
\end{figure}
\end{proof}

\subsection{$\Gamma(\LP_2, \omega, 1)$}

Let $\mathcal{C}_{\Gamma(\LP_2, \omega, 1)}$ be the class of all finite Kripke models whose frames are of the form shown in Figure \ref{fig:LP2,o,1}. 
The logic $\Gamma(\LP_2, \omega, 1)$ is sound and complete with respect to $\mathcal{C}_{\Gamma(\LP_2, \omega, 1)}$. 

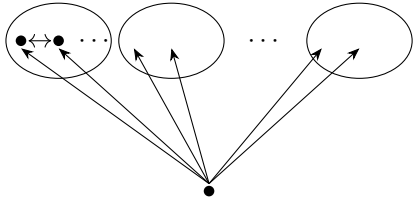
\begin{figure}[ht]
\centering
\begin{tikzpicture}
\draw (-2, 2) circle (0.7 and 0.5);
\draw (-0.5, 2) circle (0.7 and 0.5);
\draw (2, 2) circle (0.7 and 0.5);
\draw (0.75, 2) node{$\cdots$};

\fill (-2.5, 2) circle (2pt);
\fill (-2, 2) circle (2pt);
\draw (-1.5, 2) node{$\cdots$};

\fill (0, 0) circle (2pt);

\draw[<->] (-2.4,2)--(-2.1, 2);
\draw[->, >=Stealth] (0,0.1)--(-2.5, 1.9);
\draw[->, >=Stealth] (0,0.1)--(-2, 1.9);
\draw[->, >=Stealth] (0,0.1)--(-0.5, 1.9);
\draw[->, >=Stealth] (0,0.1)--(-1, 1.9);
\draw[->, >=Stealth] (0,0.1)--(1.5, 1.9);
\draw[->, >=Stealth] (0,0.1)--(2, 1.9);
\end{tikzpicture}
\caption{A frame of $\Gamma(\LP_2, \omega, 1)$}\label{fig:LP2,o,1}
\end{figure}

\begin{thm}[Shimura \cite{Shimura92}]\label{thm:LP2,o,1}
$\Gamma(\LP_2, \omega, 1)$ has ULIP. 
\end{thm}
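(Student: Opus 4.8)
The plan is to show that the class $\mathcal{C}_{\Gamma(\LP_2, \omega, 1)}$ enjoys $3$-IP; ULIP then follows immediately from Theorem \ref{thm:main}. This is the most general of the $\LP_2$-companions, since its frames carry a root together with arbitrarily many clusters of final elements, each of arbitrary finite size. Accordingly, I would combine the two techniques already developed: the handling of unboundedly many clusters from the proof of Theorem \ref{thm:GW} for $\mathbf{GW}$, and the handling of clusters of unbounded size from the proofs of Theorem \ref{thm:S4.4} and Theorem \ref{thm:LV,o,1}.

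First I would fix $M_0, M_1 \in \mathcal{C}_{\Gamma(\LP_2, \omega, 1)}$ with $(M_0, w_0) \xr{3} (M_1, w_1)$, denote by $x_i$ the root of $M_i$ and by $C_i^0, \ldots, C_i^{j_i}$ the clusters of final elements, and distinguish three cases according to whether each of $w_0, w_1$ is a final element or a root, treating the remaining symmetric case as in the earlier proofs. The uniform building block in every case is, for a cluster $C_0$ of $M_0$ matched to a cluster $C_1$ of $M_1$, the full cluster of matching pairs
\[
    C^* := \{(u_0, u_1) \mid u_0 \in C_0,\ u_1 \in C_1,\ (M_0, u_0) \xr{0} (M_1, u_1)\},
\]
rather than a world-by-world pairing via the marriage lemma, which was available only because the clusters of $\Gamma(\LP_2, 2, 1)$ had exactly two elements.

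In Case 1 ($w_0 \in C_0^0$, $w_1 \in C_1^0$), Lemma \ref{lem:match_1} yields that $C_0^0$ matches $C_1^0$, and $\mathcal{F}^*$ is the root $(w_0, w_1)$ together with the single cluster $C^*$ built from $C_0^0$ and $C_1^0$. In Case 2 ($w_0 \in C_0^0$, $w_1 = x_1$), Lemma \ref{lem:match_2} gives that $C_0^0$ matches every cluster $C_1^l$ of $M_1$, and $\mathcal{F}^*$ is the root together with one cluster built from $C_0^0$ and each $C_1^l$. In Case 3 ($w_0 = x_0$, $w_1 = x_1$), Lemma \ref{lem:match_3} supplies, for each cluster of $M_0$, a matching cluster of $M_1$, and conversely; above the root I would then place one cluster built from each $C_0^k$ together with a cluster of $M_1$ it matches, and one cluster built from each $C_1^l$ together with a cluster of $M_0$ it matches, so that both $f_0$ and $f_1$ reach all the required final clusters.

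The only real work, which I expect to be routine bookkeeping, is verifying the six conditions of Remark \ref{labeling} for these frames; the delicate point is Case 3, where coverage must be secured in both directions simultaneously. The two back-conditions for $f_0$ and $f_1$ hold because the two clauses of Definition \ref{match} guarantee that every world of $C_0$ occurs as a first coordinate of some pair in $C^*$ and every world of $C_1$ as a second coordinate; condition 6 holds by the very definition of $C^*$; and membership of each frame in $\mathcal{C}_{\Gamma(\LP_2, \omega, 1)}$ is immediate, since this class bounds neither the number of final clusters nor their sizes.
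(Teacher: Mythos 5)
Your proposal is correct and follows essentially the same route as the paper: the same case split on whether $w_0$, $w_1$ are final or roots, the same use of Lemmas \ref{lem:match_1}, \ref{lem:match_2}, and \ref{lem:match_3}, and the same construction of full clusters of matching pairs (in Case 3, one such cluster for each cluster of $M_0$ and one for each cluster of $M_1$, exactly as in the paper's $C_0^{*,k}$ and $C_1^{*,k}$). The verification you sketch of the back-conditions via the two clauses of Definition \ref{match} is the same bookkeeping the paper leaves implicit.
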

\begin{proof}
We prove that $\mathcal{C}_{\Gamma(\LP_2, \omega, 1)}$ enjoys $3$-IP. 
Let $P^+$ and $P^-$ be any finite sets of propositional variables and let $M_0 = (W_0, R_0, \Vdash_0)$ and $M_1  = (W_1, R_1, \Vdash_1)$ be Kripke models in $\mathcal{C}_{\Gamma(\LP_2, \omega, 1)}$, let $w_0 \in W_0$ and $w_1 \in W_1$, and suppose $(M_0, w_0) \xr{3} (M_1, w_1)$. 
For $i \in \{0, 1\}$, let $x_i$ be the root element of $M_i$ and let $C_i^0, C_i^1, \ldots, C_i^{j_i}$ be all the clusters of final elements of $M_i$. 
We distinguish the following three cases. 

\paragraph*{Case 1:} $w_0 \in C_0^0$ and $w_1 \in C_1^0$. 

\medskip

By Lemma \ref{lem:match_1}, $C_0^0$ matches $C_1^0$. 
Let 
\[
    C^* : = \{(u_0^0, u_1^0) \mid u_0^0 \in C_0^0, u_1^0 \in C_1^0,\ \text{and}\ (M_0, u_0^0) \xr{0} (M_1, u_1^0)\}.
\]

\paragraph*{Case 2:} $w_0 \in C_0^0$ and $w_1 = x_1$. 

\medskip

By Lemma \ref{lem:match_2}, $C_0^0$ matches all the clusters $C_1^k$ of final elements of $M_1$. 
Then, for each cluster $C_1^k$ of $M_1$, let 
\[
    C^{*,k} = \{(u_0^0, u_1^k) \mid u_0^0 \in C_0^0, u_1^k \in C_1^k, \ \text{and}\ (M_0, u_0^0) \xr{0} (M_1, u_1^k)\}.
\]

\paragraph*{Case 3:} $w_0 = x_0$ and $w_1 = x_1$. 

\medskip

By Lemma \ref{lem:match_3}, we obtain the following two properties: 
\begin{enumerate}
    \item For any cluster $C_0^k$ of $M_0$, there exists a cluster $C_1^{k'}$ of $M_1$ such that $C_0^k$ matches $C_1^{k'}$. 
    \item For any cluster $C_1^k$ of $M_1$, there exists a cluster $C_0^{k'}$ of $M_0$ such that $C_0^{k'}$ matches $C_1^{k}$. 
\end{enumerate}
For each cluster $C_0^k$ of $M_0$, let 
\[
    C_0^{*, k} : = \{(u_0^k, u_1^{k'}) \mid u_0^k \in C_0^k, u_1^{k'} \in C_1^{k'}, \ \text{and}\ (M_0, u_0^k) \xr{0} (M_1, u_1^{k'})\}.
\]
Also, for each cluster $C_1^k$ of $M_1$, let 
\[
    C_1^{*, k} : = \{(u_0^{k'}, u_1^{k}) \mid u_0^{k'} \in C_0^{k'}, u_1^{k} \in C_1^{k}, \ \text{and}\ (M_0, u_0^{k'}) \xr{0} (M_1, u_1^{k})\}.
\]

\medskip

Our frames $(W^*, R^*)$ corresponding to Cases 1 and 2 are drawn in Figure \ref{fig:LP2,o,1_case12}. 
Our frame $(W^*, R^*)$ corresponding to Case 3 is shown in Figure \ref{fig:LP2,o,1_case3}.
We can check that these frames satisfy all the required conditions.

\begin{figure}[ht]
\centering
\begin{tikzpicture}
\draw (-6, 2) circle (0.7 and 0.5);
\fill (-6, 0) circle (2pt);
\draw (-6, 2.5) node[above]{$C^*$};

\draw (-6, -0.5) node[below]{Case 1};

\fill (-6.5, 2) circle (2pt);
\fill (-6, 2) circle (2pt);
\draw (-5.5, 2) node{$\cdots$};

\draw[<->] (-6.4,2)--(-6.1, 2);
\draw[->, >=Stealth] (-6,0.1)--(-6.5, 1.9);
\draw[->, >=Stealth] (-6,0.1)--(-6, 1.9);

\draw[->, >=Stealth] (-6,0.1)--(-6, 1.9);
\draw[->, >=Stealth] (-6,0.1)--(-6.5, 1.9);

\draw[dashed] (-4, -1) -- (-4, 2.5);

\draw (-2, 2) circle (0.7 and 0.5);
\draw (-0.5, 2) circle (0.7 and 0.5);
\draw (2, 2) circle (0.7 and 0.5);
\draw (0.75, 2) node{$\cdots$};
\draw (-2, 2.5) node[above]{$C^{*,0}$};
\draw (-0.5, 2.5) node[above]{$C^{*,1}$};
\draw (2, 2.5) node[above]{$C^{*,j_1}$};

\fill (-2.5, 2) circle (2pt);
\fill (-2, 2) circle (2pt);
\draw (-1.5, 2) node{$\cdots$};

\fill (0, 0) circle (2pt);

\draw[<->] (-2.4,2)--(-2.1, 2);
\draw[->, >=Stealth] (0,0.1)--(-2.5, 1.9);
\draw[->, >=Stealth] (0,0.1)--(-2, 1.9);
\draw[->, >=Stealth] (0,0.1)--(-0.5, 1.9);
\draw[->, >=Stealth] (0,0.1)--(-1, 1.9);
\draw[->, >=Stealth] (0,0.1)--(1.5, 1.9);
\draw[->, >=Stealth] (0,0.1)--(2, 1.9);
\draw (0, -0.5) node[below]{Case 2};
\end{tikzpicture}
\caption{The frames $(W^*, R^*)$ in Cases 1 and 2}\label{fig:LP2,o,1_case12}
\end{figure}
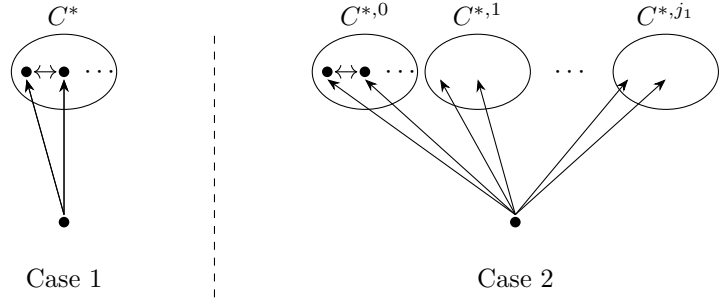

\begin{figure}[ht]
\centering
\begin{tikzpicture}
\draw (-4.4, 2) circle (0.7 and 0.5);
\draw (-3, 2) node{$\cdots$};
\draw (-1.6, 2) circle (0.7 and 0.5);
\draw (4.4, 2) circle (0.7 and 0.5);
\draw (3, 2) node{$\cdots$};
\draw (1.6, 2) circle (0.7 and 0.5);
\draw (0, 0) node[left]{$(w_0, w_1)$};
\draw (-4.4, 2.5) node[above]{$C_0^{*, 0}$};
\draw (-1.6, 2.5) node[above]{$C_0^{*, j_0}$};
\draw (1.6, 2.5) node[above]{$C_1^{*, 0}$};
\draw (4.4, 2.5) node[above]{$C_1^{*, j_1}$};

\fill (0, 0) circle (2pt);

\draw[->, >=Stealth] (0,0.1)--(-4.8, 2);
\draw[->, >=Stealth] (0,0.1)--(-4, 2);
\draw[->, >=Stealth] (0,0.1)--(-2, 2);
\draw[->, >=Stealth] (0,0.1)--(-1.2, 2);
\draw[->, >=Stealth] (0,0.1)--(4.8, 2);
\draw[->, >=Stealth] (0,0.1)--(4, 2);
\draw[->, >=Stealth] (0,0.1)--(2, 2);
\draw[->, >=Stealth] (0,0.1)--(1.2, 2);
\end{tikzpicture}
\caption{The frame $(W^*, R^*)$ in Case 3}\label{fig:LP2,o,1_case3}
\end{figure}
\end{proof}

\section{Failure of LIP}\label{sec:failure}

In this section, we investigate the failure of LIP. 
Maksimova \cite{Maksimova82} proved that each of the logics $\Gamma(\LP_2, 1, 2)$, $\Gamma(\LV, 1, 2)$, $\Gamma(\LS, 1, 2)$, and $\Gamma(\Cl, 2, 0)$ has CIP but does not have LIP (see also \cite{Maksimova91,GM05}). 
We prove that the three further logics $\Gamma(\LP_2, 1, \omega)$, $\Gamma(\LV, 1, \omega)$, and $\Gamma(\LS, 1, \omega)$ also do not have LIP. 
Moreover, we will prove this in a form that includes a part of Maksimova's results, namely, we prove that every logic $L$ satisfying $\Gamma(\LP_2, 1, \omega) \subseteq L \subseteq \Gamma(\LS, 1, 2)$ does not have LIP. 

\begin{figure}[ht]
\centering
\begin{tikzpicture}
\draw (0, 0) node{$\Gamma(\LP_2, 1, \omega)$};
\draw (1.5, 0) node{$\subseteq$};
\draw (3, 0) node{$\Gamma(\LV, 1, \omega)$};
\draw (4.5, 0) node{$\subseteq$};
\draw (6, 0) node{$\Gamma(\LS, 1, \omega)$};
\draw (1, 1.5) node{$\Gamma(\LP_2, 1, 2)$};
\draw (2.5, 1.5) node{$\subseteq$};
\draw (4, 1.5) node{$\Gamma(\LV, 1, 2)$};
\draw (5.5, 1.5) node{$\subseteq$};
\draw (7, 1.5) node{$\Gamma(\LS, 1, 2)$};
\draw (0.8, 0.8) node{\rotatebox{45}{$\subseteq$}};
\draw (3.8, 0.8) node{\rotatebox{45}{$\subseteq$}};
\draw (6.8, 0.8) node{\rotatebox{45}{$\subseteq$}};
\end{tikzpicture}
\end{figure}

Before proving our theorem, we prepare the following lemma. 

\begin{lem}\label{eight}
Over the logic $\Gamma(\LS, 1, 2)$, every formula $\varphi$ satisfying $v^+(\varphi) \subseteq \{p\}$ and $v^-(\varphi) = \emptyset$ is provably equivalent to one of the following eight formulas: 
\[
    \bot, \quad \Box p,  \quad p \land \Box \Diamond p,  \quad \Box \Diamond p,  \quad p,  \quad p \lor \Box \Diamond p,  \quad \Diamond p,  \quad \top.
\]
\end{lem}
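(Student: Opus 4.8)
The plan is to work semantically, using the fact that $\Gamma(\LS, 1, 2)$ is sound and complete with respect to a specific finite class of Kripke models. First I would identify exactly which frames validate $\Gamma(\LS, 1, 2)$: by the notation convention, this should be frames with at most a root beneath clusters of final elements, where the clusters have size at most $1$ (the $\LS$ part giving singleton final clusters) and the height is controlled by the second and third parameters. Since only the single variable $p$ is involved and it occurs only positively, every relevant formula is determined by which worlds satisfy $p$, and I would restrict attention to the finitely many pointed models (up to the relevant equivalence) that can arise in this signature.

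The key combinatorial step is to enumerate, for a rooted model with a root $x$ and final cluster(s), the possible truth patterns of $p$ and compute the resulting truth value of $\Box p$, $\Diamond p$, $p$, $\Box \Diamond p$, and their natural $\land/\lor$ combinations at the root. Because $v^+(\varphi) \subseteq \{p\}$ and $v^-(\varphi) = \emptyset$, the formula $\varphi$ is \emph{monotone} in $p$: increasing the set of worlds where $p$ holds can only increase $\varphi$'s truth value. This monotonicity is the crucial structural fact, since it drastically cuts down the lattice of possible truth functions. I would first argue that any such $\varphi$ is, over $\Gamma(\LS, 1, 2)$, equivalent to some monotone Boolean combination of the ``atoms'' $p$, $\Box p$, $\Diamond p$, $\Box \Diamond p$ available at modal depth bounded by what the logic's frames can distinguish (depth $2$ suffices given the frame shape), and then show that the closure of these atoms under $\land$ and $\lor$, modulo $\Gamma(\LS, 1, 2)$-provable equivalence, collapses to exactly the eight listed formulas plus the constants $\bot, \top$.

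Concretely, I would set up a small truth table indexed by the finitely many rooted frames allowed (a single reflexive point; a root with one final world; a root with two final worlds) crossed with the monotone assignments of $p$, evaluate each of the candidate atoms and their combinations at the root, and read off the distinct columns. The eight formulas
\[
    \bot, \quad \Box p,  \quad p \land \Box \Diamond p,  \quad \Box \Diamond p,  \quad p,  \quad p \lor \Box \Diamond p,  \quad \Diamond p,  \quad \top
\]
should correspond to the eight distinct order-preserving truth functions, forming a small lattice under the logic. I would verify that no two of them are $\Gamma(\LS, 1, 2)$-equivalent (by exhibiting a separating model for each pair) and that every monotone combination reduces to one of them (for instance $\Box p \land \Diamond p$ reduces to $\Box p$, and $\Box p \lor p$ reduces to $p$, over these frames). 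The main obstacle I anticipate is the bookkeeping in the second half: confirming that the list is genuinely closed and complete, i.e.\ that every $\land/\lor$ combination of the atoms — including nestings like $\Box(p \lor \Box\Diamond p)$ — collapses into the list, which requires carefully using the finite-depth and cluster-size constraints of the frames to eliminate apparently new formulas rather than a single slick argument.
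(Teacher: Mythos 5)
There is a genuine gap, and it sits at the very foundation of your semantic plan: you have misidentified the characteristic frame of $\Gamma(\LS, 1, 2)$. You describe the admissible frames as having singleton final clusters and list ``a root with two final worlds'' among them, but that is the frame of $\GV = \Gamma(\LV,1,1)$. The logic $\Gamma(\LS, 1, 2)$ is characterized by a single frame consisting of a \emph{two-element initial cluster} both of whose points see one final element (this is exactly the model the paper uses again in Theorem \ref{failure}). The distinction is not cosmetic: over the correct frame one has $\Gamma(\LS,1,2) \vdash \Box\Diamond p \leftrightarrow \Diamond\Box p$, which is precisely what makes the positive one-variable fragment collapse to eight classes, whereas over a root with two incomparable final points $\Diamond\Box p$ (``some final world satisfies $p$'') is a ninth positive formula inequivalent to everything on the list. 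Your truth-table enumeration, carried out over the frames you name, would therefore produce the wrong answer.

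Beyond that, the part you defer as ``bookkeeping'' is where the actual content of the lemma lives. The paper proves the statement by induction on the construction of $\varphi$: the base cases and the $\land/\lor$ cases are read off an implication diagram among the eight formulas, and the modal cases are settled by an explicit sixteen-entry table showing that $\Box$ and $\Diamond$ applied to each of the eight formulas lands back in the list (using $\Diamond\Box p \leftrightarrow \Box\Diamond p$). Your alternative --- enumerate the monotone truth functions realized on the finitely many pointed models --- can be made to work over the correct frame, but you would need to track the extension of each formula at \emph{all} worlds (not just the root, since evaluating $\Box\xi$ at the root requires the value of $\xi$ at the final world and at the other cluster point), verify that the four proposed atoms generate everything of the relevant depth, and separate the eight classes by explicit valuations. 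As written, the proposal neither does this nor works over the right models, so it does not yet constitute a proof.
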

\begin{proof}
The logic $\Gamma(\LS, 1, 2)$ is characterized by the Kripke frame shown in Figure \ref{figure_model0}. 
It is easy to see that $\Gamma(\LS, 1, 2)$ contains $\Box \Diamond p \leftrightarrow \Diamond \Box p$. 

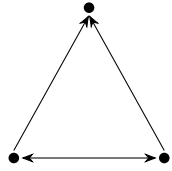
\begin{figure}[ht]
\centering
\begin{tikzpicture}
\fill (0, 0) circle (2pt);
\fill (-1, -2) circle (2pt);
\fill (1, -2) circle (2pt);

\draw[<->, >=Stealth] (-0.9,-2)--(0.9, -2);
\draw[->, >=Stealth] (-1,-1.9)--(0, -0.1);
\draw[->, >=Stealth] (1,-1.9)--(0, -0.1);
\end{tikzpicture}
\caption{The frame of $\Gamma(\LS, 1, 2)$}\label{figure_model0}
\end{figure}

The diagram in Figure \ref{fig:implications} shows the implications between these eight formulas over $\SF$.

\begin{figure}[ht]
\centering
\begin{tikzpicture}
\node (bot) at (0,0) {$\bot$};
\node (Bp) at (1.5, 0) {$\Box p$};
\node (p_BDp) at (3, 0) {$p \land \Box \Diamond p$};
\node (p) at (4.5, 1) {$p$};
\node (BDp) at (4.5, -1) {$\Box \Diamond p$};
\node (pvBDp) at (6, 0) {$p \lor \Box \Diamond p$};
\node (Dp) at (7.5, 0) {$\Diamond p$};
\node (top) at (9, 0) {$\top$};

\draw[->, >=Stealth] (bot)--(Bp);
\draw[->, >=Stealth] (Bp)--(p_BDp);
\draw[->, >=Stealth] (p_BDp)--(p);
\draw[->, >=Stealth] (p_BDp)--(BDp);
\draw[->, >=Stealth] (p)--(pvBDp);
\draw[->, >=Stealth] (BDp)--(pvBDp);
\draw[->, >=Stealth] (pvBDp)--(Dp);
\draw[->, >=Stealth] (Dp)--(top);

\end{tikzpicture}
\caption{The implication between the eight formulas over $\SF$}\label{fig:implications}
\end{figure}
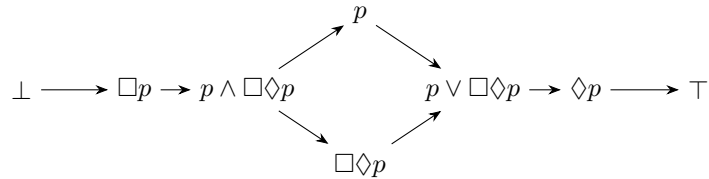

We prove the lemma by induction on the construction of $\varphi$.
If $\varphi$ is one of $p$, $\bot$, and $\top$, then the lemma is trivial. 

Suppose that the lemma holds for $\varphi_0$ and $\varphi_1$. 
It is easy to show that $\varphi_0 \land \varphi_1$ and $\varphi_0 \lor \varphi_1$ are equivalent to one of the eight formulas. 
The following list of equivalences shows that $\Box \varphi_0$ and $\Diamond \varphi_0$ are also equivalent to some of the eight formulas. 

\begin{tabbing}
\hspace{50mm} \= \hspace{50mm} \kill
$\Box \bot \leftrightarrow \bot$ \> $\Diamond \bot \leftrightarrow \bot$ \\
$\Box \Box p \leftrightarrow \Box p$ \> $\Diamond \Box p \leftrightarrow \Box \Diamond p$ \\
$\Box (p \land \Box \Diamond p) \leftrightarrow \Box p$ \> $\Diamond (p \land \Box \Diamond p) \leftrightarrow \Box \Diamond p$ \\
$\Box p \leftrightarrow \Box p$ \> $\Diamond p \leftrightarrow \Diamond p$ \\
$\Box \Box \Diamond p \leftrightarrow \Box \Diamond p$ \> $\Diamond \Box \Diamond p \leftrightarrow \Box \Diamond p$ \\
$\Box (p \lor \Box \Diamond p) \leftrightarrow \Box \Diamond p$ \> $\Diamond (p \lor \Box \Diamond p) \leftrightarrow \Diamond p$ \\
$\Box \Diamond p \leftrightarrow \Box \Diamond p$ \> $\Diamond \Diamond p \leftrightarrow \Diamond p$ \\
$\Box \top \leftrightarrow \top$ \> $\Diamond \top \leftrightarrow \top$
\end{tabbing}
\end{proof}

Here, we are ready to prove the theorem. 

\begin{thm}\label{failure}
Let $L$ be any logic satisfying $\Gamma(\LP_2, 1, \omega) \subseteq L \subseteq \Gamma(\LS, 1, 2)$. 
Then, $L$ does not have LIP. 
In particular, each of the logics $\Gamma(\LP_2, 1, \omega)$, $\Gamma(\LP_2, 1, 2)$, $\Gamma(\LV, 1, \omega)$, $\Gamma(\LV, 1, 2)$, $\Gamma(\LS, 1, \omega)$, and $\Gamma(\LS, 1, 2)$ has CIP but does not have LIP.
\end{thm}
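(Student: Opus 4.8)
The plan is to exhibit a single pair of formulas $(\varphi, \psi)$ that witnesses the failure of LIP simultaneously for every $L$ in the interval, by letting the two endpoints play complementary roles: the bottom logic $\Gamma(\LP_2, 1, \omega)$ guarantees that the implication is provable, while the top logic $\Gamma(\LS, 1, 2)$ controls the shape of any possible interpolant. Concretely, I would take $\varphi$ and $\psi$ in three variables $p, q, r$ arranged so that $p$ occurs only positively in both, $q$ occurs only in $\varphi$, and $r$ occurs only in $\psi$; then $v^\circ(\varphi) \cap v^\circ(\psi) = \{p\}$ for $\circ = +$ and $= \emptyset$ for $\circ = -$, so that any Lyndon interpolant $\rho$ is forced to satisfy $v^+(\rho) \subseteq \{p\}$ and $v^-(\rho) = \emptyset$.

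The key reduction is a monotonicity observation across the interval. Suppose $\Gamma(\LP_2, 1, \omega) \vdash \varphi \to \psi$; then $L \vdash \varphi \to \psi$ for every $L$ in the interval, so the hypothesis of LIP is met for each such $L$. Conversely, if some $L$ in the interval had a Lyndon interpolant $\rho$ for $(\varphi, \psi)$, then from $L \subseteq \Gamma(\LS, 1, 2)$ we obtain $\Gamma(\LS, 1, 2) \vdash \varphi \to \rho$ and $\Gamma(\LS, 1, 2) \vdash \rho \to \psi$, while the variable condition on $\rho$ is purely syntactic and hence unchanged; so $\rho$ would already be a Lyndon interpolant over $\Gamma(\LS, 1, 2)$. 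Thus it suffices to prove the two endpoint facts: (i) $\Gamma(\LP_2, 1, \omega) \vdash \varphi \to \psi$, and (ii) $(\varphi, \psi)$ has no Lyndon interpolant over $\Gamma(\LS, 1, 2)$. Since all six named logics lie in the interval $[\Gamma(\LP_2, 1, \omega), \Gamma(\LS, 1, 2)]$ by the displayed Hasse diagram, and each has CIP by Maksimova's cited results, the ``in particular'' clause follows at once.

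For (ii) I would use Lemma \ref{eight} decisively: any Lyndon interpolant $\rho$ has $v^+(\rho) \subseteq \{p\}$ and $v^-(\rho) = \emptyset$, so over $\Gamma(\LS, 1, 2)$ it is provably equivalent to one of the eight formulas $\bot, \Box p, p \land \Box \Diamond p, \Box \Diamond p, p, p \lor \Box \Diamond p, \Diamond p, \top$. It then remains to refute, for each of these eight $\rho$, at least one of $\Gamma(\LS, 1, 2) \vdash \varphi \to \rho$ and $\Gamma(\LS, 1, 2) \vdash \rho \to \psi$, which amounts to exhibiting a model on the three-world frame of Figure \ref{figure_model0} falsifying the relevant implication. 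The clean way to organize this is to read each $p^+$-formula, at the root of that frame, as an upward-closed set of truth patterns of $p$ over the three worlds; the eight formulas then realize exactly eight such upward-closed sets, ordered as in Figure \ref{fig:implications}. I would design $\varphi$ (using $q$ to break the symmetry between the two cluster points) so that its set of realizable $p$-patterns is upward-closed but distinct from all eight definable ones, and $\psi$ (using $r$) so that its set of failing patterns is downward-closed and positioned so that every one of the eight definable upward sets containing $\varphi$'s patterns still meets $\psi$'s failing patterns. For such a pair, each of the eight $\rho$ fails either $\varphi \to \rho$ (its pattern set omits a $\varphi$-pattern) or $\rho \to \psi$ (its pattern set meets a $\psi$-failing pattern), so no interpolant survives.

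The main obstacle is the explicit construction of $\varphi$ and $\psi$ together with the verification of the two endpoint facts. Breaking the symmetry between the two cluster points with the auxiliary variable $q$, so that $\varphi$ realizes precisely a non-definable upward-closed pattern set (and likewise shaping $\psi$ with $r$), requires care with the modal operators, since the two cluster points are indistinguishable by $p$-formulas alone. The more delicate point is (i): validity of $\varphi \to \psi$ must be checked over the frames of the smallest logic $\Gamma(\LP_2, 1, \omega)$, which are richer than the single three-world frame used for (ii), so I would either verify $\varphi \to \psi$ directly on those frames or, preferably, arrange the construction so that $\varphi \to \psi$ is valid already in the common base logic, making its membership in $\Gamma(\LP_2, 1, \omega)$ automatic. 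Once $\varphi$ and $\psi$ are fixed, both the provability in (i) and the eightfold refutation in (ii) reduce to routine finite model checks.
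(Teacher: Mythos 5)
Your overall architecture is exactly the paper's: choose a single implication $\varphi \to \psi$, prove it in the bottom logic $\Gamma(\LP_2, 1, \omega)$ so it is provable throughout the interval, note that a Lyndon interpolant for any $L$ in the interval would remain one over the top logic $\Gamma(\LS, 1, 2)$, and then use Lemma \ref{eight} to reduce the candidate interpolants to eight formulas and kill each by a finite model check on the three-world frame. The sandwiching observation and the role of Lemma \ref{eight} are both correct and match the paper.

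The genuine gap is that you never produce $\varphi$ and $\psi$. The entire content of the theorem is the existence of a concrete witnessing pair, and you explicitly defer this (``the main obstacle is the explicit construction''), offering only a design heuristic in terms of upward-closed pattern sets. Without the formulas, neither endpoint fact can be verified: (i) provability over the frames of $\Gamma(\LP_2, 1, \omega)$, which (as you rightly worry) are richer than the three-world frame, and (ii) the eightfold refutation. The paper's witness is $\varphi = p \land \Box(\Box \neg p \lor p)$ and $\psi = \Box(p \lor q \lor \Box \neg q)$, which uses only two variables and a slightly different polarity configuration from the one you sketch: $p$ occurs \emph{both} positively and negatively in $\varphi$ (the clause $\Box \neg p$ is what makes $\varphi$ strong enough to be provable in $\Gamma(\LP_2, 1, \omega)$ yet satisfiable at a world where $\Box\Diamond p$ fails), while the interpolant condition $v^+(\theta) \subseteq \{p\}$, $v^-(\theta) = \emptyset$ still comes out because the negative variables of $\varphi$ and $\psi$ are disjoint ($p$ versus $q$). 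Your proposed configuration, with $p$ purely positive in both formulas and a third variable $r$ in $\psi$, is not obviously realizable; in any case, until a specific pair is written down and both endpoint checks are carried out, the proof is incomplete at its essential step.
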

\begin{proof}
At first, we prove
\[
    \Gamma(\LP_2, 1, \omega) \vdash p \land \Box(\Box \neg p \lor p) \to \Box (p \lor q \lor \Box \neg q).
\]

Let $(W, R, \Vdash)$ be any finite Kripke model whose frame is of the form shown in Figure \ref{fig:LP2,1,o}. 
The logic $\Gamma(\LP_2, 1, \omega)$ is characterized by the class of all such Kripke frames. 

\begin{figure}[ht]
\centering
\begin{tikzpicture}
\fill (-0.5, 0) circle (2pt);
\fill (0.5, 0) circle (2pt);
\draw (1.5, 0) node{$\cdots$};
\fill (2.5, 0) circle (2pt);
\fill (3.5, 0) circle (2pt);
\draw (1.5, -1.5) circle (2 and 0.5);
\draw (4, 0) node[right]{$F$};

\fill (0, -1.5) circle (2pt);
\fill (1, -1.5) circle (2pt);
\draw (2, -1.5) node{$\cdots$};
\fill (3, -1.5) circle (2pt);
\draw (4, -1.5) node[right]{$I$};

\draw[<->, >=Stealth] (0.1,-1.5)--(0.9, -1.5);
\draw[<->, >=Stealth] (1.1,-1.5)--(1.6, -1.5);
\draw[<->, >=Stealth] (2.4,-1.5)--(2.9, -1.5);
\draw[->, >=Stealth] (0,-1.4)--(-0.5, -0.1);
\draw[->, >=Stealth] (1,-1.4)--(-0.5, -0.1);
\draw[->, >=Stealth] (3,-1.4)--(-0.5, -0.1);
\draw[->, >=Stealth] (0,-1.4)--(0.5, -0.1);
\draw[->, >=Stealth] (1,-1.4)--(0.5, -0.1);
\draw[->, >=Stealth] (3,-1.4)--(0.5, -0.1);
\draw[->, >=Stealth] (0,-1.4)--(2.5, -0.1);
\draw[->, >=Stealth] (1,-1.4)--(2.5, -0.1);
\draw[->, >=Stealth] (3,-1.4)--(2.5, -0.1);
\draw[->, >=Stealth] (0,-1.4)--(3.5, -0.1);
\draw[->, >=Stealth] (1,-1.4)--(3.5, -0.1);
\draw[->, >=Stealth] (3,-1.4)--(3.5, -0.1);

\end{tikzpicture}
\caption{A frame of $\Gamma(\LP_2, 1, \omega)$}\label{fig:LP2,1,o}
\end{figure}
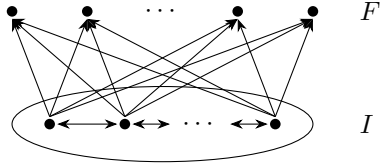

Let $F$ be the set of all final elements of the model, and $I$ be the cluster $W \setminus F$ of inner elements. 
It suffices to show that $p \land \Box(\Box \neg p \lor p) \to \Box (p \lor q \lor \Box \neg q)$ is valid in $(W, R, \Vdash)$. 
For $x \in F$, we have $x \Vdash p \land \Box(\Box \neg p \lor p) \to \Box (p \lor q \lor \Box \neg q)$ because $x \Vdash p \to \Box p$. 
For $x \in I$, assume $x \Vdash p \land \Box (\Box \neg p \lor p)$. 
Let $y \in W$ be any element such that $x R y$. 
If $y \in I$, then $y \Vdash \Diamond p \land (\Box \neg p \lor p)$, and so $y \Vdash p$. 
If $y \in F$, then $y \Vdash q \lor \Box \neg q$ holds. 
Hence, we obtain $x \Vdash \Box (p \lor q \lor \Box \neg q)$. 
We have shown that $p \land \Box(\Box \neg p \lor p) \to \Box (p \lor q \lor \Box \neg q)$ is valid in $(W, R, \Vdash)$. 

Suppose, towards a contradiction, that the logic $L$ has LIP. 
Then, there would exist a Lyndon interpolant $\theta$ of $p \land \Box(\Box \neg p \lor p) \to \Box (p \lor q \lor \Box \neg q)$. 
Since $L \subseteq \Gamma(\LS, 1, 2)$, we would have: 
\begin{enumerate}
    \item $\Gamma(\LS, 1, 2) \vdash p \land \Box(\Box \neg p \lor p) \to \theta$.
    \item $\Gamma(\LS, 1, 2) \vdash \theta \to \Box(p \lor q \lor \Box \neg q)$. 
    \item $v^+(\theta) \subseteq \{p\}$ and $v^-(\theta) = \emptyset$. 
\end{enumerate}

Here, we consider the model $M_0 = (W_0, R_0, \Vdash_0)$ of $\Gamma(\LS, 1, 2)$ as drawn in Figure \ref{figure_model1}: 
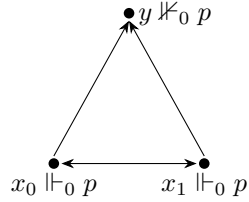
\begin{figure}[ht]
\centering
\begin{tikzpicture}
\fill (0, 0) circle (2pt);
\fill (-1, -2) circle (2pt);
\fill (1, -2) circle (2pt);

\draw[<->, >=Stealth] (-0.9,-2)--(0.9, -2);
\draw[->, >=Stealth] (-1,-1.9)--(0, -0.1);
\draw[->, >=Stealth] (1,-1.9)--(0, -0.1);

\draw (0, 0) node[right]{$y \nVdash_0 p$};
\draw (-1, -2) node[below]{$x_0 \Vdash_0 p$};
\draw (1, -2) node[below]{$x_1 \Vdash_0 p$};
\end{tikzpicture}
\caption{The model $M_0$}\label{figure_model1}
\end{figure}

It is easy to see that $(M_0, x_0) \Vdash_0 p \land \Box (\Box \neg p \lor p)$. 
Thus, we have $(M_0, x_0) \Vdash_0 \theta$ because $\Gamma(\LS, 1, 2) \vdash p \land \Box(\Box \neg p \lor p) \to \theta$. 
On the other hand, since $(M_0, x_0) \nVdash_0 \Box \Diamond p$, we have that $\theta$ is equivalent to one of $p$, $p \lor \Box \Diamond p$, $\Diamond p$, and $\top$ over $\Gamma(\LS, 1, 2)$ by Lemma \ref{eight}. 
So, $\Gamma(\LS, 1, 2) \vdash p \to \theta$. 

Next, we consider the model $M_1 = (W_0, R_0, \Vdash_1)$ of $\Gamma(\LS, 1,2)$ as drawn in Figure \ref{figure_model2}: 

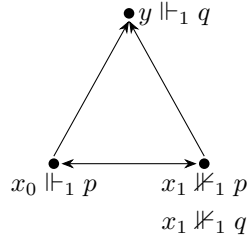
\begin{figure}[ht]
\centering
\begin{tikzpicture}
\fill (0, 0) circle (2pt);
\fill (-1, -2) circle (2pt);
\fill (1, -2) circle (2pt);

\draw[<->, >=Stealth] (-0.9,-2)--(0.9, -2);
\draw[->, >=Stealth] (-1,-1.9)--(0, -0.1);
\draw[->, >=Stealth] (1,-1.9)--(0, -0.1);

\draw (0, 0) node[right]{$y \Vdash_1 q$};
\draw (-1, -2) node[below]{$x_0 \Vdash_1 p$};
\draw (1, -2) node[below]{$x_1 \nVdash_1 p$};
\draw (1, -2.5) node[below]{$x_1 \nVdash_1 q$};
\end{tikzpicture}
\caption{The model $M_1$}\label{figure_model2}
\end{figure}

Since $(M_1, x_0) \Vdash_1 p$, we have $(M_1, x_0) \Vdash_1 \theta$. 
On the other hand, $(M_1, x_1) \nVdash_1 p$, $(M_1, x_1) \nVdash_1 q$, and $(M_1, x_1) \nVdash_1 \Box \neg q$ imply that $(M_1, x_0) \nVdash_1 \Box(p \lor q \lor \Box \neg q)$. 
This contradicts $\Gamma(\LS, 1, 2) \vdash \theta \to \Box(p \lor q \lor \Box \neg q)$. 

Therefore, $L$ does not have LIP. 
\end{proof}

\section{LIP and ULIP for intermediate propositional logics}\label{sec:intermediate}

In this section, we discuss LIP and ULIP for intermediate propositional logics. 
In particular, we will prove the following theorem. 

\begin{thm}\label{thm:intermediate}
For any consistent intermediate propositional logic $L$, the following are equivalent: 
\begin{enumerate}
    \item $L$ has CIP. 
    \item $L$ has UIP. 
    \item $L$ has LIP. 
    \item $L$ has ULIP. 
\end{enumerate}
\end{thm}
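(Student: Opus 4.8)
The plan is to establish the four-way equivalence by closing the implications $(4) \Rightarrow (3) \Rightarrow (1)$ and $(4) \Rightarrow (2) \Rightarrow (1)$ together with the single substantial implication $(1) \Rightarrow (4)$. The first four are immediate from the definitions, since ULIP simultaneously strengthens both LIP and UIP, each of which in turn strengthens CIP. Concretely, for $(4) \Rightarrow (3)$ I would, given $L \vdash \varphi \to \psi$, put $P^\circ := v^\circ(\varphi) \setminus v^\circ(\psi)$ for $\circ \in \{+,-\}$ and take a uniform Lyndon interpolant $\theta$ of $(\varphi, P^+, P^-)$; then $v^\circ(\theta) \subseteq v^\circ(\varphi) \cap v^\circ(\psi)$, $L \vdash \varphi \to \theta$, and clause (3) of the definition of ULIP applied to $\psi$ (note $v^\circ(\psi) \cap P^\circ = \emptyset$) gives $L \vdash \theta \to \psi$, so $\theta$ is a Lyndon interpolant. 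For $(4) \Rightarrow (2)$ I would instead take $P^+ = P^- = P$, so that the uniform Lyndon interpolant becomes an ordinary uniform interpolant for $(\varphi, P)$. Finally $(3) \Rightarrow (1)$ and $(2) \Rightarrow (1)$ hold because any Lyndon or uniform interpolant is in particular a Craig interpolant.

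The content lies in $(1) \Rightarrow (4)$. First I would invoke Maksimova's classification \cite{Maksimova77}: the only consistent intermediate propositional logics with CIP are $\Cl$, $\LS$, $\LV$, $\LP_2$, $\LC$, $\KC$, and $\mathbf{Int}$. It then suffices to verify ULIP for each of these seven logics. For the five logics $\Cl$, $\LS$, $\LV$, $\LP_2$, and $\LC$, which are locally tabular (cf.~\cite[p.~428]{CZ97}), I would use that LIP and ULIP coincide for locally tabular logics (cf.~\cite{Kurahashi20}); since each of the five has LIP --- the sole previously open case, $\LV$, being settled in this paper (Corollary \ref{cor1}) via LIP for its modal companion $\GV$ --- they all enjoy ULIP. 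For the two non-locally-tabular logics I would pass to modal companions: ULIP for $\mathbf{Int}$ follows from ULIP for $\mathbf{Grz}$ (Corollary \ref{cor2}), and ULIP for $\KC$ follows from the ULIP for $\mathbf{Grz.2}$ established in this paper (Corollary \ref{cor3}).

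The main obstacle is not the equivalence bookkeeping, which is purely formal, but the two genuinely new inputs on which the $\LV$ and $\KC$ cases depend: LIP for $\GV$ and ULIP for $\mathbf{Grz.2}$. Both are obtained by the $n$-IP method of Section \ref{sec:ULIP}, i.e.\ by exhibiting, for the relevant class of Kripke models, an amalgamating frame $\mathcal{F}^*$ as in Definition \ref{def:IP} satisfying the labelling conditions of Remark \ref{labeling}. Once those results are in hand, the seven-case analysis is routine and the cycle of implications closes, yielding the equivalence of CIP, UIP, LIP, and ULIP for consistent intermediate propositional logics.
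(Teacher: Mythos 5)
Your proposal is correct and takes essentially the same route as the paper: the implications $(4)\Rightarrow(3)\Rightarrow(1)$ and $(4)\Rightarrow(2)\Rightarrow(1)$ are formal, and $(1)\Rightarrow(4)$ is settled via Maksimova's classification of the seven logics with CIP, LIP for $\GV$ (hence for $\LV$), local tabularity of five of the seven logics, and ULIP for $\mathbf{Grz}$ and $\mathbf{Grz.2}$ transferred through G\"odel's translation. The only inaccuracy is your closing remark that ULIP for $\mathbf{Grz.2}$ is obtained by the $n$-IP method: the paper instead proves Theorem \ref{thm:Grz.2} by a syntactic reduction to ULIP for $\mathbf{Grz}$ via Facts \ref{fact:conservation} and \ref{lem:subst}, though this does not affect the correctness of your argument for the theorem itself.
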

The equivalence $(1 \Leftrightarrow 2)$ is already known \cite{Maksimova14}. 
So, we will prove the equivalences $(1 \Leftrightarrow 3)$ and $(1 \Leftrightarrow 4)$. 

First, we consider LIP. 
LIP for intermediate logics directly follows from LIP for modal logics through G\"odel's translation.

\begin{defn}[G\"odel's translation]
We define the translation $\mathsf{T}$ from formulas of intermediate logic to formulas of modal logic as follows:
\begin{itemize}
    \item $\mathsf{T}(p) = \Box p$ for propositional variables $p$, 
    \item $\mathsf{T}(\bot) = \bot$, 
    \item $\mathsf{T}(\varphi \land \psi) = \mathsf{T}(\varphi) \land \mathsf{T}(\psi)$, 
    \item $\mathsf{T}(\varphi \lor \psi) = \mathsf{T}(\varphi) \lor \mathsf{T}(\psi)$, 
    \item $\mathsf{T}(\neg \varphi) = \Box \neg \mathsf{T}(\varphi)$, 
    \item $\mathsf{T}(\varphi \to \psi) = \Box (\mathsf{T}(\varphi) \to \mathsf{T}(\psi))$.  
\end{itemize}
\end{defn}

For any intermediate logic $L$ and normal modal logic $M$ extending $\SF$, we say that $M$ is a \emph{modal companion} of $L$ iff for any formula $\varphi$ of intermediate logic, $L \vdash \varphi \iff M \vdash \mathsf{T}(\varphi)$.

\begin{fact}[Maksimova \cite{Maksimova82}]\label{fact:LIP}
For any intermediate propositional logic $L$, if some modal companion of $L$ has LIP, then $L$ also has LIP. 
\end{fact}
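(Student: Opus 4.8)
The plan is to transfer the interpolation problem to the modal companion $M$, solve it there with LIP of $M$, and pull the modal interpolant back to an intermediate formula, all while tracking the polarity of variable occurrences. I would first record two lemmas about $\mathsf{T}$, both by routine induction on intermediate formulas. The first is that $\mathsf{T}$ preserves polarities exactly, $v^\circ(\mathsf{T}(\chi)) = v^\circ(\chi)$ for $\circ \in \{+,-\}$: the clauses for $\neg$ and $\to$ match because $\mathsf{T}$ only prepends boxes and $v^\circ(\Box\xi)=v^\circ(\xi)$. The second is the \emph{stability} of translated formulas, $\SF \vdash \mathsf{T}(\chi) \leftrightarrow \Box\,\mathsf{T}(\chi)$; the nontrivial direction uses $\SF\vdash\Box\alpha\to\Box\Box\alpha$ and distribution of $\Box$, with the disjunction case handled by the inductive equivalences $\mathsf{T}(\chi_i)\to\Box\,\mathsf{T}(\chi_i)$. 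Both hold a fortiori over any modal companion $M\supseteq\SF$.

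Now suppose $L\vdash\varphi\to\psi$. By the modal companion property $M\vdash\mathsf{T}(\varphi\to\psi)$, that is $M\vdash\Box(\mathsf{T}(\varphi)\to\mathsf{T}(\psi))$, and reflexivity of $M$ gives $M\vdash\mathsf{T}(\varphi)\to\mathsf{T}(\psi)$. Applying LIP of $M$ yields a modal $\sigma$ with $M\vdash\mathsf{T}(\varphi)\to\sigma$, $M\vdash\sigma\to\mathsf{T}(\psi)$, and $v^\circ(\sigma)\subseteq v^\circ(\mathsf{T}(\varphi))\cap v^\circ(\mathsf{T}(\psi))$; by the first lemma this reads $v^\circ(\sigma)\subseteq v^\circ(\varphi)\cap v^\circ(\psi)$ for $\circ\in\{+,-\}$. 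Using stability I would replace $\sigma$ by $\Box\sigma$: from $M\vdash\mathsf{T}(\varphi)\to\sigma$ and necessitation, $M\vdash\Box\,\mathsf{T}(\varphi)\to\Box\sigma$, and stability gives $M\vdash\mathsf{T}(\varphi)\to\Box\sigma$, while $M\vdash\Box\sigma\to\sigma$ and $M\vdash\sigma\to\mathsf{T}(\psi)$. Since $v^\circ(\Box\sigma)=v^\circ(\sigma)$, after this normalization I may assume the modal interpolant is \emph{stable}, i.e. $M\vdash\Box\sigma\leftrightarrow\Box\Box\sigma$.

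The crux, and the step I expect to be the main obstacle, is the \emph{back-translation}: I would produce an intermediate $\rho$ with $v^\circ(\rho)\subseteq v^\circ(\sigma)$ such that $M\vdash\mathsf{T}(\rho)\leftrightarrow\Box\sigma$ (or at least $M\vdash\mathsf{T}(\varphi)\to\mathsf{T}(\rho)$ and $M\vdash\mathsf{T}(\rho)\to\mathsf{T}(\psi)$). The existence of such a $\rho$ rests on the G\"odel--McKinsey--Tarski correspondence: a stable modal formula defines an upward-closed (open) truth set in every $\SF$-model, and open sets of a preorder are exactly those definable by intuitionistic forcing, so $\Box\sigma$ is $\SF$-equivalent to $\mathsf{T}(\rho)$ for a suitable $\rho$. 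The delicate point is that this back-translation must respect polarities so that $v^\circ(\rho)\subseteq v^\circ(\sigma)$; this is precisely why I recorded the exact polarity-preservation lemma for $\mathsf{T}$, which lets me compare $v^\circ(\rho)=v^\circ(\mathsf{T}(\rho))$ with $v^\circ(\Box\sigma)=v^\circ(\sigma)$. Once $\rho$ is obtained, the modal companion property turns $M\vdash\mathsf{T}(\varphi)\to\mathsf{T}(\rho)$ and $M\vdash\mathsf{T}(\rho)\to\mathsf{T}(\psi)$ into $L\vdash\varphi\to\rho$ and $L\vdash\rho\to\psi$, and the bound $v^\circ(\rho)\subseteq v^\circ(\sigma)\subseteq v^\circ(\varphi)\cap v^\circ(\psi)$ shows that $\rho$ is a Lyndon interpolant, completing the proof.
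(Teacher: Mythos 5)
Your overall strategy---translate forward, interpolate in the modal companion, back-translate---is the right one, and it is the same route the paper takes when it proves the exact ULIP analogue of this fact (Proposition \ref{prop:ULIP1}; the fact itself is quoted from Maksimova without proof). Your two preliminary lemmas (polarity preservation and stability of translated formulas) are correct and are exactly what is needed. The gap is in the crux step.

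The claim that a stable modal formula $\Box\sigma$ is $\SF$-equivalent to $\mathsf{T}(\rho)$ for some intermediate $\rho$ is false. Stability (equivalently, having an upward-closed truth set under every valuation) is not sufficient: the G\"odel translation produces formulas in which every atom occurs inside a subformula $\Box p$, so $\mathsf{T}(\rho)$ can only ``see'' the persistent valuation $w \Vdash \Box p$, not the raw valuation of $p$. Concretely, take $\sigma = p \to q$, so $\Box\sigma = \Box(p\to q)$, which is stable. On the two-point reflexive chain $x R y$, compare the valuation with $p$ true only at $x$ and $q$ false everywhere against the valuation with $p$ and $q$ false everywhere: the extensions of $\Box p$ and $\Box q$ are empty in both models, so every $\mathsf{T}(\rho)$ has the same truth value at $x$ in both, yet $\Box(p\to q)$ is false at $x$ in the first and true at $x$ in the second. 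Hence $\Box(p\to q)$ is not $\SF$-equivalent to any G\"odel translation. The missing ingredient is the uniform substitution $\mathsf{s}(p) = \Box p$: one must pass from $\sigma$ to $\Box\mathsf{s}(\sigma)$ rather than to $\Box\sigma$. Since normal modal logics are closed under substitution and $\SF \vdash \mathsf{T}(\chi) \leftrightarrow \mathsf{s}(\mathsf{T}(\chi))$, the chain $M \vdash \mathsf{T}(\varphi) \to \Box\mathsf{s}(\sigma)$ and $M \vdash \Box\mathsf{s}(\sigma) \to \mathsf{T}(\psi)$ still goes through, $\mathsf{s}$ preserves polarities, and now $\Box\mathsf{s}(\sigma)$ genuinely lies in the image of $\mathsf{T}$ up to $\SF$-equivalence with matching signed variables---this is the result the paper cites as \cite[Theorem 14.9]{CZ97} and uses in the proof of Proposition \ref{prop:ULIP1}. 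With that repair your argument is complete.
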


For example, LIP for $\Cl$, $\LS$, $\LP_2$, $\KC$, and $\mathbf{Int}$ follow from LIP for $\SFi$, $\mathbf{GW.2}$, $\mathbf{GW}$, $\mathbf{S4.2}$, and $\SF$, respectively. 
Since all modal companions of $\LC$ do not have CIP \cite{Maksimova82b}, Kuznets and Lellmann \cite{KL18,KL21} directly proves LIP for $\LC$ without using Fact \ref{fact:LIP}. 
Among the intermediate logics having CIP, only the logic $\LV$ was not known to have LIP or not (cf.~\cite{Maksimova14}). 
Since we proved that a modal companion $\GV$ of $\LV$ has LIP (Theorem \ref{thm:GV}), we obtain the following corollary, which yields the equivalence $(1 \Leftrightarrow 3)$ of Theorem \ref{thm:intermediate}. 

\begin{cor}\label{cor1}
    $\LV$ has LIP. 
\end{cor}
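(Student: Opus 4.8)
The plan is to derive this corollary directly from the work already done on the modal side, since essentially all the difficulty was absorbed into Theorem \ref{thm:GV}. The key observation is that $\GV = \Gamma(\LV, 1, 1)$ is a modal companion of the intermediate logic $\LV$, so LIP transfers along G\"odel's translation via Fact \ref{fact:LIP}.

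First I would recall that Theorem \ref{thm:GV} establishes that $\GV$ has ULIP. Since ULIP is a simultaneous strengthening of LIP (as noted in the introduction, where a uniform Lyndon interpolant yields ordinary Lyndon interpolants), it follows immediately that $\GV$ has LIP. Next I would invoke the fact that $\GV$ is a modal companion of $\LV$, which is the organizing principle of the section containing Theorem \ref{thm:GV} and is witnessed by the identity $\GV = \Gamma(\LV, 1, 1)$. Finally, applying Fact \ref{fact:LIP} with the modal companion $\GV$ yields that $\LV$ has LIP, completing the proof.

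There is no genuine obstacle remaining at this stage: the real mathematical content---constructing, for two $\GV$-models $M_0$ and $M_1$ with $(M_0, w_0) \xrightarrow[3]{(P^+, P^-)} (M_1, w_1)$, an amalgamating frame $\mathcal{F}^*$ with the two $p$-morphisms demanded by $3$-IP (handled via the case analysis in the proof of Theorem \ref{thm:GV})---has already been carried out, and the transfer lemma (Fact \ref{fact:LIP}) is cited as known. The only point deserving a sentence of care is the passage from ULIP to LIP for $\GV$, and then the verification that $\GV$ is indeed a modal companion of $\LV$ rather than merely an extension of $\SF$; both are standard and require no new computation. Thus the corollary reduces to a one-line citation chain, and this settles the equivalence $(1 \Leftrightarrow 3)$ of Theorem \ref{thm:intermediate} once it is combined with Fact \ref{fact:LIP} applied to the remaining intermediate logics.
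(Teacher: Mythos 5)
Your proposal is correct and coincides with the paper's own argument: the paper likewise observes that $\GV$ is a modal companion of $\LV$, that Theorem \ref{thm:GV} gives $\GV$ ULIP and hence LIP, and then applies Fact \ref{fact:LIP} to transfer LIP to $\LV$. No further comment is needed.
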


Next, we consider ULIP. 
We say that a logic $L$ is \emph{locally tabular} iff for any finite set $P$ of propositional variables, there are finitely many formulas built from variables in $P$ up to $L$-provable equivalence. 
For example, it is known that $\Cl$, $\LS$, $\LV$, $\LP_2$, and $\LC$ are locally tabular. 
For locally tabular logics, LIP and ULIP coincide. 

\begin{fact}[Kurahashi {\cite[Proposition 3]{Kurahashi20}}]
If a locally tabular logic $L$ has LIP, then $L$ also has ULIP. 
\end{fact}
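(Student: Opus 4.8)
The plan is to construct the required uniform Lyndon interpolant explicitly as a finite conjunction, mimicking the construction of $\theta$ in the proofs of Theorem \ref{Cl_ULIP} and Theorem \ref{thm:main}, but now extracting finiteness from local tabularity rather than from a bound on modal depth. Fix a formula $\varphi$ and finite sets $P^+, P^-$, and put $Q^\circ := v^\circ(\varphi) \setminus P^\circ$ for $\circ \in \{+,-\}$. Since $Q^+ \cup Q^-$ is finite and $L$ is locally tabular, there are only finitely many formulas over $Q^+ \cup Q^-$ up to $L$-provable equivalence; among the equivalence classes that contain a $(Q^+, Q^-)$-formula, I would select one $(Q^+, Q^-)$-representative from each, obtaining a finite set $\Phi$ of $(Q^+, Q^-)$-formulas such that every $(Q^+, Q^-)$-formula is $L$-equivalent to a member of $\Phi$. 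I then define
\[
    \theta := \bigwedge \{ \psi \in \Phi \mid L \vdash \varphi \to \psi \}.
\]
This conjunction is finite and nonempty (a representative of $\top$ always qualifies), and each conjunct is a $(Q^+, Q^-)$-formula, so $\theta$ is itself a $(Q^+, Q^-)$-formula.

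Conditions 1 and 2 of ULIP are then immediate: $L \vdash \varphi \to \theta$ holds because every conjunct is a consequence of $\varphi$, and $v^\circ(\theta) \subseteq Q^\circ = v^\circ(\varphi) \setminus P^\circ$ holds because $\theta$ is a $(Q^+, Q^-)$-formula. The crux is condition 3. Given any $\psi$ with $L \vdash \varphi \to \psi$ and $v^\circ(\psi) \cap P^\circ = \emptyset$ for $\circ \in \{+,-\}$, I would apply LIP to $\varphi \to \psi$ to obtain a Lyndon interpolant $\rho$ with $v^\circ(\rho) \subseteq v^\circ(\varphi) \cap v^\circ(\psi)$, $L \vdash \varphi \to \rho$, and $L \vdash \rho \to \psi$. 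The key observation is that $\rho$ is a $(Q^+, Q^-)$-formula: from $v^\circ(\rho) \subseteq v^\circ(\psi)$ and $v^\circ(\psi) \cap P^\circ = \emptyset$ we get $v^\circ(\rho) \cap P^\circ = \emptyset$, and combined with $v^\circ(\rho) \subseteq v^\circ(\varphi)$ this yields $v^\circ(\rho) \subseteq Q^\circ$. Hence $\rho$ is $L$-equivalent to some $\psi' \in \Phi$, and since $L \vdash \varphi \to \rho$ we have $L \vdash \varphi \to \psi'$, so $\psi'$ is one of the conjuncts of $\theta$. Therefore $L \vdash \theta \to \psi'$, and chaining $L \vdash \psi' \to \rho$ (from the equivalence) with $L \vdash \rho \to \psi$ gives $L \vdash \theta \to \psi$, as required.

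The main obstacle is the bookkeeping that guarantees $\theta$ is genuinely a $(Q^+, Q^-)$-formula while still being a legitimate finite conjunction. Local tabularity only supplies finiteness up to equivalence over the variable set $Q^+ \cup Q^-$, and equivalent formulas may differ in their positive and negative variable occurrences, so it is essential to choose the representatives in $\Phi$ to respect the sign constraints. The only genuinely logical input is LIP, which is used precisely to convert the hypothesis that $\psi$ avoids $P^+$ positively and $P^-$ negatively into a consequence $\rho$ of $\varphi$ whose signed variables lie in $Q^+$ and $Q^-$; everything else is the observation that $\theta$, being the conjunction of all such consequences up to equivalence, must imply every such $\rho$ and hence $\psi$.
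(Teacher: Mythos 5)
Your proof is correct and follows the standard argument: the paper states this as an imported Fact (citing \cite[Proposition 3]{Kurahashi20}) without giving a proof, and your construction of $\theta$ as the finite conjunction of all $(Q^+,Q^-)$-representatives implied by $\varphi$ is exactly the construction used in the cited source and mirrored in the paper's own proofs of Theorems \ref{Cl_ULIP} and \ref{thm:main}, with local tabularity supplying the finiteness in place of a modal-depth bound. The one delicate point---choosing representatives that are themselves $(Q^+,Q^-)$-formulas so that the sign constraints survive passage to equivalence classes---is one you identify and handle correctly.
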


So, we obtain the following corollary. 

\begin{cor}
    The logics $\Cl$, $\LS$, $\LV$, $\LP_2$, and $\LC$ have ULIP. 
\end{cor}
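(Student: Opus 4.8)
The plan is to obtain ULIP for all five logics as an immediate consequence of the preceding Fact of Kurahashi, which asserts that LIP implies ULIP for any locally tabular logic. Thus it suffices to verify two things for each of $\Cl$, $\LS$, $\LV$, $\LP_2$, and $\LC$: that the logic is locally tabular, and that it has LIP. Once both are in hand, the Fact applies directly and yields ULIP with no further argument.

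For local tabularity, I would simply invoke the fact recorded just above (cf.~\cite[p.~428]{CZ97}) that each of these five intermediate logics is locally tabular, so no separate computation is needed. For LIP, I would assemble the known results together with the new contribution of this paper: $\Cl$ has LIP by Lyndon \cite{Lyndon59}; $\LS$ and $\LP_2$ have LIP by the results of Maksimova \cite{Maksimova82} and Shimura \cite{Shimura92} (equivalently, via Fact \ref{fact:LIP} applied to suitable modal companions); and $\LC$ has LIP by Kuznets and Lellmann \cite{KL18}. The remaining and only previously open case, $\LV$, is settled by Corollary \ref{cor1}, which was itself obtained by transferring LIP for the modal companion $\GV$ (Theorem \ref{thm:GV}) through G\"odel's translation using Fact \ref{fact:LIP}.

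I expect no genuine obstacle in the corollary itself: all the substantive work has already been carried out, either in the cited literature or, for the crucial logic $\LV$, in Theorem \ref{thm:GV} and Corollary \ref{cor1} of this paper. The corollary is therefore a bookkeeping step that combines local tabularity and LIP for the five logics and then applies the Fact above. The one point worth stating explicitly is that the hardest input, namely LIP for $\LV$, is exactly what resolves the open problem from Maksimova \cite{Maksimova14}, and it is this ingredient that makes the uniform strengthening available uniformly across the full list.
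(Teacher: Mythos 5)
Your proposal is correct and matches the paper's own (implicit) argument exactly: the corollary follows by combining local tabularity of the five logics, LIP for each of them (with $\LV$ supplied by Corollary \ref{cor1}), and the preceding Fact that LIP implies ULIP for locally tabular logics. Nothing further is needed.
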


Therefore, we only need to consider ULIP for $\mathbf{Int}$ and $\mathbf{KC}$. 
Here, we prove an analogue of Fact \ref{fact:LIP}. 

\begin{prop}\label{prop:ULIP1}
For any intermediate propositional logic $L$, if some modal companion of $L$ has ULIP, then $L$ also has ULIP. 
\end{prop}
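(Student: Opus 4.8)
The plan is to mimic the transfer argument behind Fact~\ref{fact:LIP}, now carrying the \emph{uniform} clause through G\"odel's translation. Fix a modal companion $M$ of $L$ having ULIP. The starting point is the routine observation that $v^\circ(\mathsf{T}(\alpha)) = v^\circ(\alpha)$ for every intermediate formula $\alpha$ and $\circ \in \{+,-\}$ (an easy induction; the clauses for $\neg$ and $\to$ flip and recombine the polarities exactly as the definition of $\mathsf{T}$ prescribes), together with the fact that each translation is box-stable, i.e.\ $\SF \vdash \mathsf{T}(\alpha) \leftrightarrow \Box\mathsf{T}(\alpha)$, again by induction. Using these two facts and necessitation, the companion relation $L \vdash \alpha \iff M \vdash \mathsf{T}(\alpha)$ upgrades to $L \vdash \alpha \to \beta \iff M \vdash \mathsf{T}(\alpha)\to\mathsf{T}(\beta)$ for all intermediate $\alpha,\beta$. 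Thus, to exhibit a uniform Lyndon interpolant $\theta_L$ of $(\varphi, P^+, P^-)$ in $L$, it suffices to find an \emph{intermediate} formula $\theta_L$ with $M \vdash \mathsf{T}(\varphi)\to\mathsf{T}(\theta_L)$, with $v^\circ(\theta_L)\subseteq v^\circ(\varphi)\setminus P^\circ$, and such that $M \vdash \mathsf{T}(\theta_L)\to\mathsf{T}(\psi)$ whenever $\psi$ is intermediate with $M \vdash \mathsf{T}(\varphi)\to\mathsf{T}(\psi)$ and $v^\circ(\psi)\cap P^\circ=\emptyset$.

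Next I would feed the \emph{modal} formula $\mathsf{T}(\varphi)$ into the ULIP of $M$. This yields a modal uniform Lyndon interpolant $\theta$ of $(\mathsf{T}(\varphi), P^+, P^-)$ in $M$: we get $M \vdash \mathsf{T}(\varphi)\to\theta$, the bound $v^\circ(\theta)\subseteq v^\circ(\mathsf{T}(\varphi))\setminus P^\circ = v^\circ(\varphi)\setminus P^\circ$ (here the polarity identity above is exactly what makes the variable bookkeeping match), and the universal property that $M \vdash \theta\to\chi$ for every modal $\chi$ with $M \vdash \mathsf{T}(\varphi)\to\chi$ and $v^\circ(\chi)\cap P^\circ=\emptyset$. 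Since $\mathsf{T}(\varphi)$ is box-stable, replacing $\theta$ by $\Box\theta$ preserves all three properties: the antecedent $\mathsf{T}(\varphi)$ still implies $\Box\theta$ because $M\vdash\mathsf{T}(\varphi)\to\Box\mathsf{T}(\varphi)\to\Box\theta$, boxing leaves $v^\circ$ unchanged, and $\Box\theta\to\theta$ keeps the universal property intact. So I may assume $M \vdash \theta\leftrightarrow\Box\theta$.

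The crux, and the step I expect to be the main obstacle, is to convert this modal $\theta$ into an intermediate $\theta_L$. What is needed is a \emph{back-translation}: an intermediate formula $\theta_L$ with $v^\circ(\theta_L)\subseteq v^\circ(\theta)$ and $M \vdash \theta\leftrightarrow\mathsf{T}(\theta_L)$; equivalently, one must show that the modal uniform Lyndon interpolant of a \emph{translated} antecedent is itself (equivalent to) a translation. It is precisely here that the uniform clause bites: for the non-uniform Fact~\ref{fact:LIP} one needs only an intermediate formula lying \emph{between} the two translations $\mathsf{T}(\varphi)$ and $\mathsf{T}(\psi)$, which can be read off the modal interpolant by a one-sided back-translation; but the uniform condition~$(3)$ must hold for \emph{all} admissible $\psi$ simultaneously, which forces $\theta_L$ to be equivalent to the extremal consequence $\theta$ rather than merely to inhabit an interval. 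Box-stability alone does not suffice for this, since e.g.\ $\Box\Diamond p$ is box-stable over $\SF$ yet is not $\SF$-equivalent to any translation. The point to exploit is that $\theta$ is not an arbitrary box-stable formula but the \emph{strongest} consequence of the box-stable $\mathsf{T}(\varphi)$ in the restricted language; the plan is to establish the back-translation by working with the Kripke models of $M$ and their poset counterparts, verifying that $\theta$ — being forced exactly at the points $(P^+,P^-)$-simulable from a model of $\mathsf{T}(\varphi)$ — is persistent in the sense characteristic of a translation, and then reading off $\theta_L$ by a polarity-preserving back-translation that sends $\Box$ and $\Diamond$ to the appropriate intuitionistic connectives (note $v^\circ(\Box\chi)=v^\circ(\chi)$ and $v^\circ(\Diamond\chi)=v^\circ(\chi)$, so such a back-translation does not enlarge the admitted variables).

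Granting the back-translation, the verification is immediate. Condition~$(1)$ of ULIP follows from $M \vdash \mathsf{T}(\varphi)\to\theta\leftrightarrow\mathsf{T}(\theta_L)$ via the upgraded companion relation, giving $L \vdash \varphi\to\theta_L$. Condition~$(2)$ follows from $v^\circ(\theta_L)\subseteq v^\circ(\theta)\subseteq v^\circ(\varphi)\setminus P^\circ$. For condition~$(3)$, if $\psi$ is intermediate with $L \vdash \varphi\to\psi$ and $v^\circ(\psi)\cap P^\circ=\emptyset$, then $M\vdash\mathsf{T}(\varphi)\to\mathsf{T}(\psi)$ with $v^\circ(\mathsf{T}(\psi))\cap P^\circ=\emptyset$, so the universal property of $\theta$ yields $M\vdash\theta\to\mathsf{T}(\psi)$, whence $M\vdash\mathsf{T}(\theta_L)\to\mathsf{T}(\psi)$ and therefore $L\vdash\theta_L\to\psi$. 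Hence $\theta_L$ is a uniform Lyndon interpolant of $(\varphi, P^+, P^-)$ in $L$, and $L$ has ULIP.
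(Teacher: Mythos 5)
Your overall transfer scheme is the right one, and your verification of conditions (1)--(3) \emph{given} a suitable back-translation matches what the paper does. But the step you yourself flag as ``the main obstacle'' is a genuine gap, and it is also set up in an unnecessarily strong form. You reduce the problem to finding an intermediate $\theta_L$ with $M \vdash \theta \leftrightarrow \mathsf{T}(\theta_L)$, arguing that the uniform clause ``forces $\theta_L$ to be equivalent to the extremal consequence $\theta$.'' That inference is incorrect: condition (3) only quantifies over \emph{intermediate} $\psi$ with the variable restriction, so it suffices that $\mathsf{T}(\theta_L)$ lie between $\mathsf{T}(\varphi)$ and every such $\mathsf{T}(\psi)$; it need not be $M$-equivalent to the strongest restricted consequence $\theta$ in the full modal language. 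Your plan to prove that $\theta$ itself is (equivalent to) a translation --- via a semantic persistence argument on Kripke models --- is only sketched, would have to be carried out separately for every modal companion $M$ under consideration, and is at best highly nontrivial; nothing in the ULIP of $M$ guarantees that the extremal restricted consequence of a translated formula lands in the image of $\mathsf{T}$.

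The paper avoids this entirely. It takes the same $\theta$, but instead of back-translating $\theta$ itself, it passes to $\Box \mathsf{s}(\theta)$, where $\mathsf{s}$ is the substitution $p \mapsto \Box p$. By a standard fact (cf.\ \cite[Theorem 14.9]{CZ97}), \emph{every} formula of the shape $\Box\mathsf{s}(\theta)$ is $\SF$-equivalent to $\mathsf{T}(\xi)$ for some intermediate $\xi$ with $v^\circ(\xi) = v^\circ(\theta)$ --- no persistence analysis of $\theta$ is needed. The sandwiching then goes through because translations are fixed points of $\mathsf{s}$ and of prefixing $\Box$ up to $\SF$-equivalence: from $M \vdash \mathsf{T}(\varphi) \to \theta$ one gets $M \vdash \mathsf{T}(\varphi) \to \Box\mathsf{s}(\theta)$ by necessitation and closure under substitution, and from $M \vdash \theta \to \mathsf{T}(\psi)$ one gets $M \vdash \Box\mathsf{s}(\theta) \to \mathsf{T}(\psi)$ the same way. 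So $\mathsf{T}(\xi)$ is generally strictly weaker than $\theta$, yet still does the job --- exactly the slack your ``extremality'' argument denies exists. To repair your proof you should replace the back-translation of $\theta$ by this $\Box\mathsf{s}(\cdot)$ construction; as written, the central step is missing.
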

\begin{proof}
Suppose that a modal companion $M$ of $L$ has ULIP. 
Let $\varphi$ be any formula and $P^+, P^-$ be any finite sets of propositional variables. 
Let $\theta$ be a uniform Lyndon interpolant of $(\mathsf{T}(\varphi), P^+, P^-)$ in $M$. 
Let $\mathsf{s}$ be the uniform substitution such that $\mathsf{s}(p) = \Box p$. 
It is easy to show that there exists a formula $\xi$ of intermediate logic satisfying the following conditions (cf.~\cite[Theorem 14.9]{CZ97}): 
\begin{enumerate}
    \item $\SF \vdash \Box \mathsf{s}(\theta) \leftrightarrow \mathsf{T}(\xi)$, 
    \item $v^\circ(\xi) = v^\circ(\theta)$ for $\circ \in \{+, -\}$. 
\end{enumerate}

We show that $\xi$ is a uniform Lyndon interpolant of $(\varphi, P^+, P^-)$ in $L$. 

\medskip

1. Since $M \vdash \mathsf{T}(\varphi) \to \theta$ and $\SF \vdash \mathsf{T}(\varphi) \leftrightarrow \Box \mathsf{T}(\varphi)$, we have $M \vdash \mathsf{T}(\varphi) \to \Box \theta$. 
Since $M$ is closed under applying uniform substitutions and $\SF \vdash \mathsf{T}(\varphi) \leftrightarrow \mathsf{s} \left(\mathsf{T}(\varphi) \right)$, we obtain $M \vdash \mathsf{T}(\varphi) \to \Box \mathsf{s}(\theta)$. 
Hence, $M \vdash \mathsf{T}(\varphi) \to \mathsf{T}(\xi)$, and so $M \vdash \mathsf{T}(\varphi \to \xi)$. 
Since $M$ is a modal companion of $L$, we get $L \vdash \varphi \to \xi$. 

\medskip

2. For $\circ\in \{+, -\}$, we have $v^\circ(\xi) = v^\circ(\theta) \subseteq v^\circ\left(\mathsf{T}(\varphi) \right) \setminus P^\circ = v^\circ(\varphi) \setminus P^\circ$. 

\medskip

3. Let $\psi$ be any formula such that $L \vdash \varphi \to \psi$ and $v^\circ(\psi) \cap P^\circ = \emptyset$. 
Then, $M \vdash \mathsf{T}(\varphi) \to \mathsf{T}(\psi)$. 
Since $v^\circ\left(\mathsf{T}(\psi) \right) \cap P^\circ = v^\circ(\psi) \cap P^\circ = \emptyset$, we obtain $M \vdash \theta \to \mathsf{T}(\psi)$. 
Then, in the similar way as above, we obtain $M \vdash \mathsf{T}(\xi \to \psi)$. 
Therefore, $L \vdash \xi \to \psi$. 
\end{proof}

It was proved in \cite{Kurahashi20} that a modal companion $\mathbf{Grz}$ of $\mathbf{Int}$ has ULIP. 
So, we immediately obtain the following corollary. 

\begin{cor}\label{cor2}
    $\mathbf{Int}$ has ULIP. 
\end{cor}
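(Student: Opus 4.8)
The plan is to derive the result immediately from the transfer principle established in Proposition \ref{prop:ULIP1} together with the known ULIP of $\mathbf{Grz}$. Proposition \ref{prop:ULIP1} states that whenever some modal companion of an intermediate logic $L$ enjoys ULIP, so does $L$ itself. Thus the entire argument reduces to exhibiting one modal companion of $\mathbf{Int}$ that has ULIP and invoking the proposition.

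First I would recall the classical Gödel--McKinsey--Tarski correspondence, which guarantees that $\mathbf{Grz}$ is a modal companion of $\mathbf{Int}$; that is, for every formula $\varphi$ of intuitionistic propositional logic we have $\mathbf{Int} \vdash \varphi$ if and only if $\mathbf{Grz} \vdash \mathsf{T}(\varphi)$, where $\mathsf{T}$ is the translation fixed earlier in this section. This fact places us exactly in the hypothesis of Proposition \ref{prop:ULIP1} with the choice $M = \mathbf{Grz}$.

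Next I would appeal to the result of \cite{Kurahashi20}, recalled just before the statement, that $\mathbf{Grz}$ has ULIP. Applying Proposition \ref{prop:ULIP1} to the modal companion $\mathbf{Grz}$ of $\mathbf{Int}$ then yields that $\mathbf{Int}$ has ULIP, which is the desired conclusion.

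There is no genuine obstacle here, since the two substantial ingredients---the transfer lemma for modal companions (Proposition \ref{prop:ULIP1}) and the ULIP of $\mathbf{Grz}$ (cited from \cite{Kurahashi20})---are both already available. The only point worth making explicit is that $\mathbf{Grz}$ is indeed a modal companion of $\mathbf{Int}$, which is standard, so the corollary follows in a single line by combining these facts.
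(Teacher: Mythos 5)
Your proof is correct and matches the paper's argument exactly: the paper likewise obtains the corollary by citing ULIP for $\mathbf{Grz}$ from \cite{Kurahashi20} and applying Proposition \ref{prop:ULIP1} to the modal companion $\mathbf{Grz}$ of $\mathbf{Int}$. Nothing is missing.
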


Finally, we focus on ULIP for $\mathbf{KC}$. 
For this, it suffices to prove ULIP for a modal companion $\mathbf{Grz.2}$ of $\mathbf{KC}$. 
Let $\alpha(p)$ be the formula $\Box (\Box \Diamond p \leftrightarrow \Diamond \Box p)$, then $\mathbf{Grz.2}$ is the least normal extension of $\mathbf{Grz}$ containing $\alpha(p)$. 
We use the following facts. 

\begin{fact}[Cf.~{\cite[Lemma 5.32 and Proposition 5.35]{GM05}}]\label{fact:conservation}
For any formula $\varphi$ with $v(\varphi) = \{q_0, \ldots, q_{k-1}\}$, if $\mathbf{Grz.2} \vdash \varphi$, then
\[
    \mathbf{Grz} \vdash \alpha(\bot) \land \alpha(q_0) \land \cdots \land \alpha(q_{k-1}) \to \varphi.
\]
\end{fact}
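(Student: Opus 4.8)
The plan is to derive the statement from two standard ingredients: a deduction theorem for normal extensions of $\SF$, which converts the extra axioms of $\mathbf{Grz.2}$ into antecedents of an implication provable in $\mathbf{Grz}$, and an \emph{instance reduction} showing that an arbitrary instance $\alpha(\psi)$ is already derivable in $\mathbf{Grz}$ from the atomic instances $\alpha(q)$ with $q \in v(\psi)$ together with $\alpha(\bot)$. Interleaving these with an elimination of auxiliary variables by substitution yields the claim.

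First I would invoke the deduction theorem. Since $\mathbf{Grz.2} = \mathbf{Grz} \oplus \alpha(p)$, a proof of $\varphi$ in $\mathbf{Grz.2}$ uses only finitely many substitution instances $\alpha(\psi_1), \dots, \alpha(\psi_n)$ of the extra axiom. The deduction theorem for normal logics above $\SF$ then gives
\[
    \mathbf{Grz} \vdash \boxdot \alpha(\psi_1) \land \cdots \land \boxdot \alpha(\psi_n) \to \varphi,
\]
where $\boxdot \beta := \beta \land \Box \beta$. Over $\SF$ we have $\Box \Box \beta \leftrightarrow \Box \beta$, and each $\alpha(\psi_i)$ already begins with a $\Box$, so $\boxdot \alpha(\psi_i)$ is $\mathbf{Grz}$-equivalent to $\alpha(\psi_i)$; hence $\mathbf{Grz} \vdash \alpha(\psi_1) \land \cdots \land \alpha(\psi_n) \to \varphi$. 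Next I would remove auxiliary variables: if a variable $r$ occurs in some $\psi_i$ but not in $\varphi$, applying the substitution $r \mapsto \bot$ keeps the implication $\mathbf{Grz}$-provable (as $\mathbf{Grz}$ is closed under substitution), leaves $\varphi$ unchanged, and turns each $\alpha(\psi_i)$ into $\alpha(\psi_i')$ with $\psi_i' = \psi_i[r \mapsto \bot]$, using that substitution commutes with the construction of $\alpha$. Repeating this, I obtain instances with $v(\psi_i') \subseteq \{q_0, \dots, q_{k-1}\}$ and $\mathbf{Grz} \vdash \bigwedge_i \alpha(\psi_i') \to \varphi$. (This step also explains the presence of $\alpha(\bot)$: one checks that $\alpha(\bot)$ is already a theorem of $\SF$, since $\Diamond \bot$ and $\Box \bot$ are false on every reflexive frame, so its appearance in the antecedent is harmless and merely accommodates the substitutions $r \mapsto \bot$.)

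The heart of the argument, and the step I expect to be the main obstacle, is the instance reduction
\[
    \mathbf{Grz} \vdash \alpha(\bot) \land \bigwedge_{q \in v(\psi)} \alpha(q) \to \alpha(\psi).
\]
I would prove this semantically, using that $\mathbf{Grz}$ is sound and complete with respect to finite partially ordered frames. On such a frame, $x \Vdash \alpha(\theta)$ unwinds to a condition comparing, at each $y \ge x$, the distribution of $\theta$ over the maximal points above $y$ (which governs $\Box \Diamond \theta$) with the existence of an upward cone on which $\theta$ holds throughout (which governs $\Diamond \Box \theta$). Since the truth value of any formula at a maximal point is determined by the atoms true there, a valuation that is coherent in this sense for every atom of $\psi$ forces the analogous coherence for $\psi$ itself. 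The delicate point is the treatment of non-maximal points inside cones, where the truth of $\theta$ is not determined by the local atoms; this forces the argument to proceed by induction on the structure of $\psi$ (handling $\land$, $\lor$, $\neg$, and especially $\Box$) rather than by a one-line maximal-point computation.

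Finally I would combine the pieces. Once the instance reduction is established, each $\alpha(\psi_i')$ follows in $\mathbf{Grz}$ from $\alpha(\bot) \land \alpha(q_0) \land \cdots \land \alpha(q_{k-1})$, since $v(\psi_i') \subseteq \{q_0, \dots, q_{k-1}\}$. Chaining this with $\mathbf{Grz} \vdash \bigwedge_i \alpha(\psi_i') \to \varphi$ by propositional reasoning yields $\mathbf{Grz} \vdash \alpha(\bot) \land \alpha(q_0) \land \cdots \land \alpha(q_{k-1}) \to \varphi$, which is exactly the asserted conservation statement.
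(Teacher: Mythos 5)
The paper offers no proof of this Fact: it is imported directly from Gabbay and Maksimova's book, so there is no internal argument to compare yours against. Judged on its own terms, your proposal is a correct and essentially standard derivation. The deduction-theorem step is fine (for normal extensions of $\mathbf{K4}$ one gets $\mathbf{Grz} \vdash \bigwedge_i \bigl(\alpha(\psi_i) \land \Box\alpha(\psi_i)\bigr) \to \varphi$, and since each $\alpha(\psi_i)$ is boxed this collapses to $\bigwedge_i \alpha(\psi_i) \to \varphi$ over $\SF$), as is the elimination of extraneous variables by the substitution $r \mapsto \bot$, which commutes with the template $\psi \mapsto \alpha(\psi)$ and accounts for the occurrence of $\alpha(\bot)$. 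The one step you leave as a sketch, the instance reduction $\mathbf{Grz} \vdash \alpha(\bot) \land \bigwedge_{q \in v(\psi)} \alpha(q) \to \alpha(\psi)$, is true and closes more easily than you fear. Over a finite poset every point lies below a maximal point, and at a maximal point $\Box$ and $\Diamond$ are vacuous, so $y \Vdash \Box\Diamond\theta$ (resp.\ $y \Vdash \Diamond\Box\theta$) holds iff $\theta$ holds at every (resp.\ at some) maximal point above $y$; hence $\alpha(\theta)$ holds at $x$ iff $\theta$ has constant truth value on the set of maximal points above each $y \ge x$. Since the truth value of any formula at a maximal point is a Boolean function of the atoms true there, constancy of every $q \in v(\psi)$ on these sets forces constancy of $\psi$ --- your worry about non-maximal points inside cones evaporates, because a witness for $\Diamond\Box\theta$ can always be pushed up to a maximal point, which is itself a degenerate cone. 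With that lemma in hand and the completeness of $\mathbf{Grz}$ with respect to finite posets, the final chaining is routine, so the proposal as a whole is sound.
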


\begin{fact}[Cf.~{\cite[Lemma 5.33]{GM05}}]\label{lem:subst}
Let $L$ be a normal modal logic and $\varphi(p), \psi$ and $\rho$ be any formulas such that $L \vdash \psi \to \rho$.  
\begin{enumerate}
    \item If $p \notin v^-(\varphi)$, then $L \vdash \varphi(\psi) \to \varphi(\rho)$. 
    \item If $p \notin v^+(\varphi)$, then $L \vdash \varphi(\rho) \to \varphi(\psi)$. 
\end{enumerate}
\end{fact}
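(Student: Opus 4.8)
The plan is to prove statements (1) and (2) \emph{simultaneously} by induction on the construction of $\varphi$, regarding $\varphi$ as a formula in which the distinguished variable $p$ may occur at various positions and writing $\varphi(\psi)$ (resp.~$\varphi(\rho)$) for the result of substituting $\psi$ (resp.~$\rho$) for every occurrence of $p$. Proving both directions at once is essential, since the recursive clauses for $\neg$ and for the antecedent of $\to$ exchange the roles of positive and negative occurrences, so the two monotonicity directions feed into one another and cannot be established independently.

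For the base cases, if $\varphi = p$ then $v^-(\varphi) = \emptyset$, so the hypothesis of (1) is automatically satisfied and its conclusion $L \vdash \psi \to \rho$ is exactly the assumption; meanwhile $p \in v^+(\varphi)$, so the hypothesis of (2) fails and (2) holds vacuously. If $\varphi$ is $\bot$ or a variable $q \neq p$, then $\varphi(\psi)$ and $\varphi(\rho)$ coincide and both implications are instances of $L \vdash \chi \to \chi$. The cases $\varphi = \varphi_0 \land \varphi_1$ and $\varphi = \varphi_0 \lor \varphi_1$ follow immediately from the induction hypothesis together with $v^\circ(\varphi_0 \ast \varphi_1) = v^\circ(\varphi_0) \cup v^\circ(\varphi_1)$, using that $\land$ and $\lor$ are monotone in both arguments.

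The informative cases are those that flip polarity. For $\varphi = \neg \xi$, the hypothesis $p \notin v^-(\neg \xi) = v^+(\xi)$ lets me apply induction hypothesis (2) to $\xi$, yielding $L \vdash \xi(\rho) \to \xi(\psi)$, and contraposition gives $L \vdash \neg \xi(\psi) \to \neg \xi(\rho)$, which is precisely (1) for $\neg \xi$; direction (2) is symmetric. For $\varphi = \varphi_0 \to \varphi_1$, the hypothesis of (1) reads $p \notin v^+(\varphi_0)$ and $p \notin v^-(\varphi_1)$; applying induction hypothesis (2) to $\varphi_0$ and (1) to $\varphi_1$ gives $L \vdash \varphi_0(\rho) \to \varphi_0(\psi)$ and $L \vdash \varphi_1(\psi) \to \varphi_1(\rho)$, whence $L \vdash (\varphi_0(\psi) \to \varphi_1(\psi)) \to (\varphi_0(\rho) \to \varphi_1(\rho))$ by propositional reasoning, and (2) is handled symmetrically.

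The only case requiring more than propositional logic is $\varphi = \Box \xi$, where $v^\circ(\Box \xi) = v^\circ(\xi)$. Assuming $p \notin v^-(\xi)$, induction hypothesis (1) gives $L \vdash \xi(\psi) \to \xi(\rho)$; since $L$ is a normal modal logic, it is closed under necessitation and contains the $\mathbf{K}$ axiom, so $L \vdash \Box \xi(\psi) \to \Box \xi(\rho)$, which is (1) for $\Box \xi$, and (2) is obtained in the same way. No step presents a genuine difficulty; the only thing to watch is the bookkeeping of which direction of the induction hypothesis to invoke after each polarity flip, and the appeal to normality in the modal case, which is exactly what licenses passing monotonicity through $\Box$.
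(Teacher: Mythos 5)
Your proof is correct: the simultaneous induction on the structure of $\varphi$, with the polarity-swapping cases $\neg$ and the antecedent of $\to$ invoking the opposite direction of the induction hypothesis, and normality (necessitation plus the $\K$ axiom) handling the $\Box$ case, is exactly the standard argument for this monotonicity fact. The paper itself gives no proof — it states the result as a Fact and cites Lemma 5.33 of Gabbay and Maksimova \cite{GM05} — so there is nothing to compare against beyond noting that your argument is the expected one and is complete.
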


\begin{thm}\label{thm:Grz.2}
$\mathbf{Grz.2}$ has ULIP. 
\end{thm}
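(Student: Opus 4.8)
The plan is to transfer ULIP from $\mathbf{Grz}$, which is known to have ULIP \cite{Kurahashi20}, to its extension $\mathbf{Grz.2}$, using the conservativity result (Fact \ref{fact:conservation}) and the substitution monotonicity (Fact \ref{lem:subst}). Given a formula $\varphi$ and finite sets $P^+, P^-$ of propositional variables, I would take as the uniform Lyndon interpolant a $\mathbf{Grz}$-uniform Lyndon interpolant $\theta$ attached to $\varphi$ and $(P^+, P^-)$. Since $\mathbf{Grz} \subseteq \mathbf{Grz.2}$, the provability $\mathbf{Grz} \vdash \varphi \to \theta$ upgrades to $\mathbf{Grz.2} \vdash \varphi \to \theta$, and the Lyndon variable bound $v^\circ(\theta) \subseteq v^\circ(\varphi) \setminus P^\circ$ is identical in both logics, so conditions (1) and (2) of ULIP come for free.

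The whole difficulty lies in condition (3): for every $\psi$ with $\mathbf{Grz.2} \vdash \varphi \to \psi$ and $v^\circ(\psi) \cap P^\circ = \emptyset$ I must derive $\mathbf{Grz.2} \vdash \theta \to \psi$. The first move is conservativity: Fact \ref{fact:conservation} turns $\mathbf{Grz.2} \vdash \varphi \to \psi$ into $\mathbf{Grz} \vdash \alpha(\bot) \land \bigwedge_{q \in v(\varphi) \cup v(\psi)} \alpha(q) \land \varphi \to \psi$. I then want to rewrite this as $\mathbf{Grz} \vdash \varphi \to \rho$ for a target $\rho$ satisfying $v^\circ(\rho) \cap P^\circ = \emptyset$, invoke the $\mathbf{Grz}$-universal property of $\theta$ to get $\mathbf{Grz} \vdash \theta \to \rho$, and finally reattach the $\alpha$-conjuncts, which are all theorems of $\mathbf{Grz.2}$, to conclude $\mathbf{Grz.2} \vdash \theta \to \psi$.

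The main obstacle is that the directedness axiom $\alpha(q) = \Box(\Box \Diamond q \leftrightarrow \Diamond \Box q)$ contains $q$ both positively and negatively, so every conjunct $\alpha(q)$ pollutes both Lyndon sign-sets of $\rho$. Hence an $\alpha(q)$ with a forgotten $q \in P^+ \cup P^-$ cannot simply be carried into $\rho$ without violating $v^\circ(\rho) \cap P^\circ = \emptyset$, and must be discharged. For a forgotten variable not occurring in $\varphi$ this is clean: replacing each such $q \in P^+$ by $\top$ and each $q \in P^-$ by $\bot$ makes $\alpha(q)$ equal to $\alpha(\top)$ or $\alpha(\bot)$, which are $\mathbf{Grz}$-provable and vanish, while $q \in P^+$ forces $q \notin v^+(\psi)$ (resp. $q \in P^-$ forces $q \notin v^-(\psi)$), so Fact \ref{lem:subst} supplies the one-directional implication relating the substituted consequent back to $\psi$. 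After discharging, $\rho$ conjoins $\alpha$-axioms only for retained variables in $(v(\varphi) \cup v(\psi)) \setminus (P^+ \cup P^-)$, for which $v^\circ(\rho) \cap P^\circ = \emptyset$ does hold.

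The delicate point, which I expect to be the crux, is that a forgotten variable may also occur in $\varphi$ with both polarities, so a constant substitution would change $\varphi$ and sever the link with $\theta$. To avoid this I would keep the $\alpha$-conjuncts for the variables of $\varphi$ on the source side, taking $\theta$ to be the $\mathbf{Grz}$-uniform Lyndon interpolant of $\varphi \land \alpha(\bot) \land \bigwedge_{q \in v(\varphi)} \alpha(q)$ with respect to forget-sets enlarged so as to cancel the spurious polarities these conjuncts introduce (forgetting positively every variable of $\varphi$ that does not occur positively in $\varphi$, and dually); one checks this still yields $v^\circ(\theta) \subseteq v^\circ(\varphi) \setminus P^\circ$. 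The occurrences in $\psi$ of variables falling into these enlarged forget-sets — in particular a variable occurring positively in $\psi$ but not positively in $\varphi$ — must then be removed from the target by the same $\top/\bot$ substitution together with the monotonicity of Fact \ref{lem:subst}, the surviving $\psi$-part implying the original $\psi$. Making all of this sign bookkeeping close up simultaneously, rather than any individual interpolation step, is where the real work is.
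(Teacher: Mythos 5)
Your overall strategy is the paper's: reduce to ULIP of $\mathbf{Grz}$ via Fact \ref{fact:conservation}, and discharge the directedness conjuncts $\alpha(q)$ using constant substitutions and Fact \ref{lem:subst}. You also correctly isolate the crux, namely that $\alpha(q)$ puts $q$ into both sign sets. But your concrete construction has a genuine gap in condition (3). By attaching $\alpha(q)$ for \emph{every} $q \in v(\varphi)$ to the source and enlarging the forget sets to $\tilde{P}^+ \supseteq v(\varphi)\setminus v^+(\varphi)$ and $\tilde{P}^- \supseteq v(\varphi)\setminus v^-(\varphi)$, you weaken the universal property of $\theta$: it now only applies to targets $\rho$ with $v^\circ(\rho)\cap \tilde{P}^\circ=\emptyset$. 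Consider a variable $p$ with $p \in v^-(\varphi)\setminus v^+(\varphi)$, $p \notin P^+\cup P^-$, and $p \in v^+(\psi)\cap v^-(\psi)$ (e.g.\ $\varphi = \Diamond\Box\neg p$ and $\psi = \Box\Diamond\neg p \land (\Diamond p \to \Diamond p)$ with $P^+=P^-=\emptyset$). Then $p \in \tilde{P}^+$, so no admissible target may contain $p$ positively; yet $\psi$ does, and neither $p\mapsto\top$ nor $p\mapsto\bot$ can remove it: Fact \ref{lem:subst} gives $\mathsf{s}(\psi)\to\psi$ only when $p$ is absent from one of $v^+(\psi), v^-(\psi)$, and here it is in both (moreover $p\mapsto\top$ also breaks the antecedent direction $\varphi\to\mathsf{s}(\varphi)$ since $p\in v^-(\varphi)$). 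So your verification of condition (3) does not close in this case. (Your $\theta$ itself is in fact a correct interpolant — it provably implies the one the paper constructs — but your argument cannot establish this.)

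The paper resolves exactly this tension by routing the $\alpha$'s more selectively. Only the variables in $X = (v^+(\varphi)\cap v^-(\varphi)) \cup ((v^+(\varphi)\setminus v^-(\varphi))\cap P^-) \cup ((v^-(\varphi)\setminus v^+(\varphi))\cap P^+)$ get their $\alpha$'s on the source side: for these the polarity pollution is absorbed either by $v^\circ(\varphi)$ itself or by the subtraction of $P^\circ$, so \emph{no enlargement of the forget sets is needed} and $\theta$ retains its universal property with respect to the original $(P^+,P^-)$. The $\alpha$'s for the remaining variables of $\varphi$ (the set $Y$) and for $v(\psi)\setminus v(\varphi)$ are carried on the target side, where the constraint $v^\circ(\cdot)\cap P^\circ=\emptyset$ tolerates them after a substitution $\mathsf{s}$ — defined in terms of both $\varphi$ and $\psi$, sending $p$ to $\top$ if $p\notin v^-(\varphi)\cup v^+(\psi)$ and to $\bot$ if additionally $p\notin v^+(\varphi)\cup v^-(\psi)$ — eliminates precisely those $\alpha(q)$ that would violate it. In the problematic case above, $\alpha(p)$ simply stays in the target, which is harmless because $p\notin P^+\cup P^-$. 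If you want to salvage your write-up, you should adopt this $X$/$Y$ split rather than the blanket enlargement of the forget sets.
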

\begin{proof}
Let $\varphi$ be any formula and $P^+, P^-$ be any finite sets of propositional variables. 
If $\circ$ is $+$ (resp.~$-$), then let $\bullet$ denote $-$ (resp.~$+$). 
Let $X$ be the union of the following three sets: 
\[
    v^+(\varphi) \cap v^-(\varphi), \quad (v^+(\varphi) \setminus v^-(\varphi)) \cap P^-, \quad (v^-(\varphi) \setminus v^+(\varphi)) \cap P^+.
\]
Also let $Y$ be the union of the following two sets: 
\[
    v^+(\varphi) \setminus (v^-(\varphi) \cup P^-), \quad v^-(\varphi) \setminus (v^+(\varphi) \cup P^+).
\]
Then, $v(\varphi)$ is the disjoint union of $X$ and $Y$. 

By ULIP for $\mathbf{Grz}$, we find a uniform Lyndon interpolant $\theta$ of 
\[
    \left(\alpha(\bot) \land \bigwedge_{p \in X}\alpha(p) \land \varphi, P^+, P^- \right)
\]
in $\mathbf{Grz}$.  
We prove that $\theta$ is also a uniform Lyndon interpolant of $\left(\varphi, P^+, P^- \right)$ in $\mathbf{Grz.2}$.

\medskip

1. Since $\mathbf{Grz} \vdash \alpha(\bot) \land \bigwedge_{p \in X}\alpha(p) \land \varphi \to \theta$, we get $\mathbf{Grz.2} \vdash \varphi \to \theta$.

\medskip

2. We have:
\begin{align*}
     v^\circ(\theta) & \subseteq v^\circ \left(\alpha(\bot) \land \bigwedge_{p \in X}\alpha(p) \land \varphi \right) \setminus P^\circ \\
     & = \bigl[ v^\circ(\varphi) \cup \bigl\{\bigl((v^\bullet(\varphi) \setminus v^\circ(\varphi)\bigr) \cap P^\circ \bigr\} \bigr] \setminus P^\circ \\
     & = v^\circ(\varphi) \setminus P^\circ. 
\end{align*}

\medskip

3. Let $\psi$ be any formula such that $\mathbf{Grz.2} \vdash \varphi \to \psi$ and $v^\circ(\psi) \cap P^\circ = \emptyset$. 
By Fact \ref{fact:conservation}, we have
\[
    \mathbf{Grz} \vdash \alpha(\bot) \land \bigwedge_{p \in v(\varphi) \cup v(\psi)} \alpha(p) \land \varphi \to \psi. 
\]
Since $v(\varphi) \cup v(\psi)$ is the disjoint union of $X$, $Y$, and $v(\psi) \setminus v(\varphi)$, we obtain
\begin{equation}\label{Grz}
    \mathbf{Grz} \vdash \alpha(\bot) \land \bigwedge_{p \in X} \alpha(p) \land \varphi \to \bigl(\bigwedge_{q \in Y \cup (v(\psi) \setminus v(\varphi))} \alpha(q) \to \psi \bigr). 
\end{equation}
Let $\mathsf{s}$ be the uniform substitution defined as follows: 
\[
    \mathsf{s}(p) : = \begin{cases}
    \top & \text{if}\ p \notin v^-(\varphi) \cup v^+(\psi), \\
    \bot & \text{else if}\ p \notin v^+(\varphi) \cup v^-(\psi), \\
    p & \text{otherwise.}
    \end{cases}
\]
Since $\K \vdash p \to \top$ and $\K \vdash \bot \to p$, by Fact \ref{lem:subst}, we have $\K \vdash \varphi \to \mathsf{s}(\varphi)$ and $\K \vdash \mathsf{s}(\psi) \to \psi$. 
Since $\mathbf{Grz} \vdash \alpha(\top)$, we have
\[
    \mathbf{Grz} \vdash \alpha(\bot) \land \bigwedge_{p \in X} \alpha(p) \land \varphi \to \mathsf{s} \left(\alpha(\bot) \land \bigwedge_{p \in X} \alpha(p) \land \varphi\right). 
\]
Let $Z$ be the intersection of the following three sets: 
\[
    Y \cup (v(\psi) \setminus v(\varphi)), \quad v^-(\varphi) \cup v^+(\psi), \quad v^+(\varphi) \cup v^-(\psi).
\]
Then, it is shown that $Z$ is included in the union of the following three sets: 
\[
    (v^+(\varphi) \cap v^+(\psi)) \setminus (v^-(\varphi) \cup P^-), \quad (v^-(\varphi) \cap v^-(\psi)) \setminus (v^+(\varphi) \cup P^+), \quad (v^+(\psi) \cap v^-(\psi)) \setminus v(\varphi).
\]
Moreover, since $v^\circ(\psi) \cap P^\circ = \emptyset$, we have $Z \subseteq (v^+(\psi) \setminus P^-) \cup (v^-(\psi) \setminus P^+)$. 
Then, by applying the uniform substitution $\mathsf{s}$ to (\ref{Grz}), it follows from the above observations that
\[
    \mathbf{Grz} \vdash \alpha(\bot) \land \bigwedge_{p \in X} \alpha(p) \land \varphi \to \left(\bigwedge_{\substack{q \in v^+(\psi) \setminus P^- \\
     \text{or}\ q \in v^-(\psi) \setminus P^+}} \alpha(q) \to \psi \right). 
\]
We have: 
\begin{align*}
    & v^\circ \left(\bigwedge_{\substack{q \in v^+(\psi) \setminus P^- \\
     \text{or}\ q \in v^-(\psi) \setminus P^+}} \alpha(q) \to \psi \right) \cap P^\circ \\
    & = \bigl[ (v^+(\psi) \setminus P^-) \cup (v^-(\psi) \setminus P^+) \cup v^\circ(\psi) \bigr] \cap P^\circ \\
    & = \bigl[(v^\bullet(\psi) \setminus P^\circ) \cup v^\circ(\psi) \bigr] \cap P^\circ \\
    & = v^\circ(\psi) \cap P^\circ \\
    & = \emptyset. 
\end{align*}
Therefore, we obtain
\[
    \mathbf{Grz} \vdash \theta \to \left(\bigwedge_{\substack{q \in v^+(\psi) \setminus P^- \\ \text{or}\ q \in v^-(\psi) \setminus P^+}} \alpha(q) \to \psi \right).
\]
We conclude $\mathbf{Grz.2} \vdash \theta \to \psi$. 
\end{proof}

\begin{cor}\label{cor3}
$\mathbf{KC}$ has ULIP. 
\end{cor}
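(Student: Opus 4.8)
The plan is to obtain this corollary immediately from the two results just established, namely Theorem \ref{thm:Grz.2} and Proposition \ref{prop:ULIP1}. The only ingredient that must be in place beforehand is that $\mathbf{Grz.2}$ is a modal companion of $\mathbf{KC}$; this is the standard G\"odel--McKinsey--Tarski correspondence for the Jankov logic, already recorded in the discussion preceding Theorem \ref{thm:Grz.2}, so I would simply cite it rather than reprove it.

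First I would invoke Theorem \ref{thm:Grz.2}, which asserts that the modal logic $\mathbf{Grz.2}$ has ULIP. Then I would apply Proposition \ref{prop:ULIP1}, taking the intermediate logic $L$ to be $\mathbf{KC}$ and its modal companion $M$ to be $\mathbf{Grz.2}$. The hypothesis of the proposition---that some modal companion of $\mathbf{KC}$ enjoys ULIP---is exactly what Theorem \ref{thm:Grz.2} supplies, so its conclusion yields ULIP for $\mathbf{KC}$ directly.

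There is essentially no obstacle here: all of the genuine work has already been carried out in the proof of Theorem \ref{thm:Grz.2}, where the conservativity of $\mathbf{Grz.2}$ over $\mathbf{Grz}$ (Fact \ref{fact:conservation}) together with the substitution lemma (Fact \ref{lem:subst}) were used to transfer a uniform Lyndon interpolant from $\mathbf{Grz}$ to $\mathbf{Grz.2}$. The corollary is then a one-line specialization of the general transfer principle of Proposition \ref{prop:ULIP1}, and no further calculation is needed.
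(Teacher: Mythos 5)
Your proposal matches the paper exactly: the corollary is obtained by combining Theorem \ref{thm:Grz.2} (ULIP for $\mathbf{Grz.2}$) with the transfer principle of Proposition \ref{prop:ULIP1}, using the standard fact that $\mathbf{Grz.2}$ is a modal companion of $\mathbf{KC}$. No gaps; this is the same argument.
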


We have proved the equivalence $(1 \Leftrightarrow 4)$ of Theorem \ref{thm:intermediate}. 

\section*{Acknowledgements}

This work was supported by JSPS KAKENHI Grant Number JP23K03200.

\bibliographystyle{plain}
\bibliography{ref}

\begin{thebibliography}{10}

\bibitem{CZ97}
Alexander Chagrov and Michael Zakharyaschev.
\newblock {\em Modal logic}, volume~35 of {\em Oxford Logic Guides}.
\newblock The Clarendon Press, Oxford University Press, New York, 1997.
\newblock Oxford Science Publications.

\bibitem{Craig57}
William Craig.
\newblock Three uses of the {H}erbrand-{G}entzen theorem in relating model
  theory and proof theory.
\newblock {\em The Journal of Symbolic Logic}, 22:269--285, 1957.

\bibitem{Fitting83}
Melvin Fitting.
\newblock {\em Proof methods for modal and intuitionistic logics}, volume 169
  of {\em Synthese Library}.
\newblock D. Reidel Publishing Co., Dordrecht-Boston, Mass., 1983.

\bibitem{Gabbay71}
Dov~M. Gabbay.
\newblock Semantic proof of {C}raig's interpolation theorem for intuitionistic
  logic and extensions. {II}.
\newblock In {\em Logic {C}olloquium '69 ({P}roc. {S}ummer {S}chool and
  {C}olloq., {M}anchester, 1969)}, volume Vol. 61 of {\em Stud. Logic Found.
  Math.}, pages 403--410. North-Holland, Amsterdam-London, 1971.

\bibitem{GM05}
Dov~M. Gabbay and Larisa~L. Maksimova.
\newblock {\em Interpolation and definability}, volume~46 of {\em Oxford Logic
  Guides}.
\newblock The Clarendon Press, Oxford University Press, Oxford, 2005.
\newblock Modal and intuitionistic logics.

\bibitem{Kurahashi20}
Taishi Kurahashi.
\newblock Uniform {L}yndon interpolation property in propositional modal
  logics.
\newblock {\em Archive for Mathematical Logic}, 59(5-6):659--678, 2020.

\bibitem{Kuznets16}
Roman Kuznets.
\newblock Proving {C}raig and {L}yndon interpolation using labelled sequent
  calculi.
\newblock In {\em Logics in artificial intelligence}, volume 10021 of {\em
  Lecture Notes in Comput. Sci.}, pages 320--335. Springer, Cham, 2016.

\bibitem{Kuznets18}
Roman Kuznets.
\newblock Multicomponent proof-theoretic method for proving interpolation
  properties.
\newblock {\em Annals of Pure and Applied Logic}, 169(12):1369--1418, 2018.

\bibitem{KL18}
Roman Kuznets and Bj\"orn Lellmann.
\newblock Interpolation for intermediate logics via hyper- and linear nested
  sequents.
\newblock In {\em Advances in modal logic. {V}ol. 12}, pages 473--492. Coll.
  Publ., [London], 2018.

\bibitem{KL21}
Roman Kuznets and Bj\"orn Lellmann.
\newblock Interpolation for intermediate logics via injective nested sequents.
\newblock {\em Journal of Logic and Computation}, 31(3):797--831, 2021.

\bibitem{Lyndon59}
Roger~C. Lyndon.
\newblock An interpolation theorem in the predicate calculus.
\newblock {\em Pacific Journal of Mathematics}, 9:129--142, 1959.

\bibitem{Maksimova77}
Larisa~L. Maksimova.
\newblock Craig's theorem in superintuitionistic logics and amalgamable
  varieties of pseudo-boolean algebras.
\newblock {\em Algebra i Logika}, 16(6):643--681, 741, 1977.
\newblock English translation in: \textit{Algebra and Logic}, 16(6), 427--455,
  1977.

\bibitem{Maksimova80}
Larisa~L. Maksimova.
\newblock Interpolation theorems in modal logics. {S}ufficient conditions.
\newblock {\em Algebra i Logika}, 19(2):194--213, 1980.
\newblock English translation in: \textit{Algebra and Logic}, 19(2), 120--132,
  1980.

\bibitem{Maksimova82b}
Larisa~L. Maksimova.
\newblock Absence of the interpolation property in modal companions of a
  {D}ummett logic.
\newblock {\em Algebra i Logika}, 21(6):690--694, 1982.
\newblock English translation in: \textit{Algebra and Logic}, 21(6), 460--462,
  1982.

\bibitem{Maksimova82}
Larisa~L. Maksimova.
\newblock The {L}yndon interpolation theorem in modal logics.
\newblock In {\em Mathematical logic and the theory of algorithms}, volume~2 of
  {\em Trudy Inst. Mat.}, pages 45--55. ``Nauka'' Sibirsk. Otdel., Novosibirsk,
  1982.

\bibitem{Maksimova91}
Larisa~L. Maksimova.
\newblock Amalgamation and interpolation in normal modal logic.
\newblock {\em Studia Logica}, 50(3-4):457--471, 1991.

\bibitem{Maksimova14}
Larisa~L. Maksimova.
\newblock The {L}yndon property and uniform interpolation over the
  {G}rzegorczyk logic (in {R}ussian).
\newblock {\em Rossi\u{\i}skaya Akademiya Nauk. Sibirskoe Otdelenie. Institut
  Matematiki im. S. L. Soboleva. Sibirski\u{\i} Matematicheski\u{\i} Zhurnal},
  55(1):147--155, 2014.
\newblock English translation in: \textit{Siberian Mathematical Journal},
  55(1), 118--124, 2014.

\bibitem{Marks99}
Maarten Marx.
\newblock Interpolation in modal logic.
\newblock In {\em Algebraic methodology and software technology ({A}mazonia,
  1999)}, volume 1548 of {\em Lecture Notes in Comput. Sci.}, pages 154--163.
  Springer, Berlin, 1999.

\bibitem{Pitts92}
Andrew~M. Pitts.
\newblock On an interpretation of second-order quantification in first-order
  intuitionistic propositional logic.
\newblock {\em The Journal of Symbolic Logic}, 57(1):33--52, 1992.

\bibitem{Schumm76}
George~F. Schumm.
\newblock Interpolation in {${\rm S}5$} and some related systems.
\newblock {\em Reports on Mathematical Logic}, (6):107--109, 1976.

\bibitem{Schutte62}
Kurt Sch\"utte.
\newblock Der {I}nterpolationssatz der intuitionistischen {P}r\"adikatenlogik.
\newblock {\em Mathematische Annalen}, 148:192--200, 1962.

\bibitem{Shimura92}
Tatsuya Shimura.
\newblock Cut-free systems for some modal logics containing {${\rm S}4$}.
\newblock {\em Reports on Mathematical Logic}, (26):39--65, 1992.

\bibitem{Visser96}
Albert Visser.
\newblock Uniform interpolation and layered bisimulation.
\newblock In {\em G\"odel '96 ({B}rno, 1996)}, volume~6 of {\em Lecture Notes
  Logic}, pages 139--164. Springer, Berlin, 1996.

\end{thebibliography}

\end{document}